\pdfoutput=1

\documentclass[twoside,11pt,reqno]{amsart}
\usepackage{amsmath,amssymb,amscd,mathrsfs,amscd, todonotes}
\usepackage{graphics,verbatim}
\usepackage{todonotes}
\usepackage{stmaryrd}

\usepackage[breaklinks]{hyperref}



\oddsidemargin .2in \evensidemargin .2in \textwidth 6in


\let\oldsection=\section

\newcommand{\losemi}{{\otimes \kern -.78em \ltimes}}
\newcommand{\rosemi}{{\otimes \kern -.78em \rtimes}}

\newcommand{\Hom}{\ensuremath{\operatorname{Hom}}}

\newcommand{\Ind}{\ensuremath{\operatorname{ind}}}

\newcommand{\Ext}{\operatorname{Ext}}

\newcommand{\Z}{\mathbb{Z}}
\newcommand{\C}{\mathbb{C}}

\newcommand{\res}{\ensuremath{\operatorname{res}}}
\newcommand{\rk}{\operatorname{rank}}
\newcommand{\X}{\mathcal{X}}


\newcommand{\fg}{\ensuremath{\mathfrak{g}}}
\newcommand{\fb}{\ensuremath{\mathfrak{b}}}
\newcommand{\fu}{\ensuremath{\mathfrak{u}}}
\newcommand{\fh}{\ensuremath{\mathfrak{h}}}

\newcommand{\fp}{\ensuremath{\mathfrak{p}}}
\newcommand{\ft}{\ensuremath{\mathfrak{h}}}

\renewcommand{\d}{\delta}

\newcommand{\ga}{\gamma}

\newcommand{\V}{\mathcal{V}}
\newcommand{\HH}{\operatorname{H}}


\newcommand{\R}{\ensuremath{\mathbb{R}}}

\newcommand{\CC}{\mathcal{C}}

\newcommand{\XX}{\mathcal{X}}

\newcommand{\Tensor}{\operatorname{Tensor}}
\newcommand{\Loc}{\operatorname{Loc}}
\newcommand{\Proj}{\operatorname{Proj}}
\newcommand{\stmod}{\operatorname{stmod}}

\newcommand{\rank}{\operatorname{rank}}
\newcommand{\bT}{\mathbf T}

\newcommand{\bC}{\mathbf C}
\newcommand{\bK}{\mathbf K}

\newcommand{\bP}{\mathbf P}
\newcommand{\bI}{\mathbf I}
\newcommand{\bR}{\mathbf R}

\newcommand{\unit}{\ensuremath{\mathbf 1}}

\renewcommand{\AA}{\mathcal{A}}
\newcommand{\NN}{\mathcal{N}}
\newcommand{\UU}{\mathcal{U}}
\newcommand{\BU}{\mathbb{U}}
\renewcommand{\mod}{\operatorname{mod}}
\newcommand{\Mod}{\operatorname{Mod}}
\newcommand{\OO}{\mathcal{O}}

\newcommand{\gr}{\operatorname{gr}}
\newcommand{\Spec}{\operatorname{Spec}}
\newcommand{\Stab}{\operatorname{Stab}}
\newcommand{\Stmod}{\operatorname{Stmod}}

\newcommand{\cl}{\operatorname{cl}}
\newcommand{\ZZ}{Z}
\newcommand{\supp}{\operatorname{supp}}

\newcommand{\gf}[2]{\genfrac{}{}{0pt}{}{#1}{#2}}
\renewcommand{\Gamma}{\varGamma}

\makeatletter
\newcommand{\leqnomode}{\tagsleft@true}
\newcommand{\reqnomode}{\tagsleft@false}
\makeatother




\newtheorem{theorem}{Theorem}[subsection]

\makeatletter\let\c@fact\c@theorem\makeatother

\makeatletter\let\c@note\c@theorem\makeatother

\newtheorem{lemma}{Lemma}[subsection]
\makeatletter\let\c@lemma\c@theorem\makeatother

\newtheorem{assumption}{Assumption}[subsection]
\makeatletter\let\c@lemma\c@theorem\makeatother

\makeatletter\let\c@alg\c@theorem\makeatother

\makeatletter\let\c@remark\c@theorem\makeatother

\newtheorem{prop}{Proposition}[subsection]
\makeatletter\let\c@prop\c@theorem\makeatother

\makeatletter\let\c@conj\c@theorem\makeatother

\newtheorem{cor}{Corollary}[subsection]
\makeatletter\let\c@cor\c@theorem\makeatother

\newtheorem{defn}{Definition}[subsection]
\makeatletter\let\c@defn\c@theorem\makeatother

\theoremstyle{definition}

\makeatletter\let\c@example\c@theorem\makeatother
\numberwithin{equation}{subsection}

%
%
\usepackage[capitalise]{cleveref}
\crefname{theorem}{Theorem}{Theorems}
\crefname{fact}{Fact}{Facts}
\crefname{note}{Note}{Notes}
\crefname{lemma}{Lemma}{Lemmas}
\crefname{alg}{Algorithm}{Algorithms}
\crefname{remark}{Remark}{Remarks}
\crefname{example}{Example}{Examples}
\crefname{prop}{Proposition}{Propositions}
\crefname{conj}{Conjecture}{Conjectures}
\crefname{cor}{Corollary}{Corollaries}
\crefname{defn}{Definition}{Definitions}
\crefname{equation}{\!\!}{\!\!} 


\newcounter{listequation}
\newenvironment{eqnlist}{\begin{list}
{(\thesubsection.\thelistequation)}
{\usecounter{listequation} \setlength{\itemsep}{1.0ex plus.2ex
minus.1ex}}\setcounter{listequation}{\value{equation}}}
{\setcounter{equation}{\value{listequation}}\end{list}}

\newcounter{mytempcount} 

\begin{document}
\title{Tensor Triangular Geometry for Quantum Groups}

\author{Brian D. Boe }
\address{Department of Mathematics \\
          University of Georgia \\
          Athens, GA 30602}
\email{brian@math.uga.edu}
\author{Jonathan R. Kujawa}
\address{Department of Mathematics \\
          University of Oklahoma \\
          Norman, OK 73019}
\thanks{Research of the second author was partially supported by NSF grant
DMS-1160763 and NSA grant H98230-16-0055}\
\email{kujawa@math.ou.edu}
\author{Daniel K. Nakano}
\address{Department of Mathematics \\
          University of Georgia \\
          Athens, GA 30602}
\thanks{Research of the third author was partially supported by NSF
grant  DMS-1402271 and DMS-1701768}\
\email{nakano@math.uga.edu}
\date{\today}
\subjclass[2000]{Primary 17B56, 17B10; Secondary 13A50}

\begin{abstract} Let $\fg$ be a complex simple Lie algebra and let $U_{\zeta}(\fg)$ be the corresponding Lusztig $\Z[q,q^{-1}]$-form of the quantized enveloping algebra specialized to an $\ell$th root of unity. Moreover, let $\mod(U_{\zeta}(\fg))$ be the braided monoidal category of finite-dimensional modules for $U_{\zeta}(\fg)$. 
In this paper we classify the thick tensor ideals of $\mod(U_{\zeta}(\fg))$ and compute the prime spectrum of the stable module category associated to 
$\mod(U_{\zeta}(\fg))$ as defined by Balmer. 
\end{abstract}

\maketitle

\section{Introduction}\label{S:intro}   

\subsection{} The geometry of the nilpotent cone $\NN$ plays a central role in the seminal work of Arkhipov, Bezrukavnikov, and Ginzburg \cite{ABG} on the representation theory for quantum groups at a root of unity. In particular, they showed that there are derived equivalences between the principal block 
for the large quantum group $U_{\zeta}(\fg)$, equivariant coherent sheaves on the Springer resolution of $\NN$, and perverse sheaves on the loop Grassmannian. One consequence of these equivalences of triangulated categories is a proof of Lusztig's character formula for quantum groups when $\ell>h$ (where $\ell$ is the order of the root of unity and $h$ is the Coxeter number for $\fg$).  An important underlying idea in \cite{ABG} is the lifting of the support variety theory for the small quantum group to obtain the equivalence between the principal block and coherent sheaves on the Springer resolution. This approach illustrates that to calculate characters of simple modules at the representation theoretic level one needs to examine the structure of the underlying tensor triangulated category at the derived level where the role of geometry is more transparent.

As further evidence of this connection, Drupieski, Nakano, and Parshall \cite{DNP}, using the validity of the Lusztig character formula and the positivity of the coefficients of the Kazhdan-Lusztig polynomials for the affine Weyl group, calculated the support varieties for all irreducible representations for the small quantum group at an $\ell$th root of unity.  Lusztig demonstrated that there is a bijection between two-sided cells of the affine Weyl group and nilpotent orbits in $\NN$. Humphreys conjectured that the support variety of a tilting module for $U_{\zeta}(\fg)$, when restricted to the small quantum group, would be given by the nilpotent orbit whose corresponding two-sided cell contains the high weight of the tilting module. Bezrukavnikov \cite{Bez} verified a version of this conjecture using the tools from \cite{ABG} described above. Bezrukavnikov's result involves so-called ``relative support varieties.'' In the case of tilting modules with regular highest weight, we verify Humphreys' conjecture  for ``absolute support varieties'' in \cref{P:orbitrealization}.

Let $\bT$ be the category of finite-dimensional tilting modules for $U_{\zeta}(\fg)$. This category is a braided monoidal category via the coproduct on the Hopf algebra $U_{\zeta}(\fg)$. Ostrik \cite{Ost} showed that the thick tensor ideals in $\bT$ are in one-to-one correspondence with two-sided cells for the affine Weyl group, or equivalently (via Lusztig's bijection), nilpotent orbits in $\NN$. In particular, two tilting modules generate the same thick tensor ideal in $\bT$ if and only if they have the same support varieties. In contrast,  the classification of the thick tensor ideals in the category of all finite-dimensional $U_{\zeta}(\fg )$-modules has proven elusive. 

\subsection{} Given a braided monoidal category, a fundamental question is to classify its thick tensor ideals (see \cref{SS:TTC} for definitions). In the same setting, Balmer showed that one can extract the ambient geometry of the category by using the tensor structure to treat the category like a commutative ring. He defined a notion of prime tensor ideals in the category and used this to construct a categorical spectrum (referred to in this paper as the Balmer spectrum). Determining the Balmer spectrum  is thus important because it reveals a natural geometric object that governs the structure and interaction of objects in the category.  Furthermore, Balmer proved that determining the Balmer spectrum is intimately related to the classification of the thick tensor ideals.  An example closely related to the results of this paper is that the Balmer spectrum for restricted Lie algebras (for the stable module category) identifies with the restricted nilpotent cone \cite{Bal}. Another example, in the same spirit, is in an earlier paper of the authors which determined the Balmer spectrum for classical Lie superalgebras by utilizing their detecting subalgebras \cite{BKN}. 

The main goals of this paper are to (i) classify the thick tensor ideals in $\mod(U_{\zeta}(\fg))$ and (ii) determine the Balmer spectrum for the stable module category associated to $\mod(U_{\zeta}(\fg))$ (see \cref{SS:modulecategories} for definitions).  One can view our results as complementary to Ostrik's classification of tensor ideals in $\bT$.   Both (i) and (ii) will involve much of the machinery developed by the authors in \cite{BKN}.  In particular, this entails constructing a support data from the stable module category to an appropriate Zariski space. The geometric object of central importance turns out to be the collection of $G$-invariant ideals in the coordinate algebra of $\NN$. Furthermore, we rely heavily on the use of infinitely generated modules for $U_{\zeta}(\fg)$ and localization theorems in a stable module category which contains these infinitely generated modules.

A fundamental obstacle we need to overcome is to establish various versions of the tensor product property for the support theories for modules over the large and small quantum groups. 
This is addressed as follows. First, we define the notion of quasi support data whose axioms involve a weaker form of the tensor product condition. Second, to obtain an analogue of a result of Hopkins, a key assumption (i.e., 
Assumption~\ref{A:projectivity}) is identified that  involves the projectivity of infinitely generated modules.  We then reduce, via line bundle cohomology techniques, this issue of projectivity to modules for the small quantum group attached to the Borel subalgebra $\fb \subseteq \fg$. The next step is to pass to a designated associated graded algebra, $\gr u_{\zeta}(\fb)$, for $u_{\zeta}(\fb)$. It is in this category that we prove a tensor product theorem (cf.\ Theorem~\ref{T:grWintersection}). 
The main idea of the proof is to apply work of Benson, Erdmann, and Holloway \cite{BEH} to relate supports of modules in $\gr u_{\zeta}(\fb)$ to supports for a quantum complete intersection (with equal parameters) where the theory is better understood via rank varieties. By carefully keeping track of the relationship between the support theories between $u_{\zeta}(\fb)$ and $\gr u_{\zeta}(\fb)$ we can successfully verify the key assumption. Much of our analysis uses key facts in the  local support theory as developed by Benson, Iyengar, and Krause \cite{BIK}. 

In Proposition~\ref{P:TensorProductProperty} and Theorem~\ref{T:classification}, as an application of the classification of thick tensor ideals for $\stmod(U_{\zeta}(\fg))$, we prove the tensor product property for supports of $U_{\zeta}(\fg)$-modules. 
The tensor product property is then applied in the construction of a homeomorphism to compute the Balmer spectrum of $\stmod(U_{\zeta}(\fg))$ (cf.\ Theorem~\ref{T:spcquantum}). The problem of proving the tensor product property for modules over the small quantum group $u_{\zeta}(\fg)$ remains open.  

\subsection{Connections to Affine Lie Algebras} Our results have analogues for affine Lie algebras which we now describe. Let $\fg$ be a finite-dimensional complex simple Lie algebra, $\hat{\fg} = \C [t,t^{-1}] \otimes \fg \oplus \C c \oplus\C d$ be the corresponding untwisted affine Lie algebra, and $\tilde{\fg}$ be the subalgebra $\tilde{\fg}=\C [t,t^{-1}] \otimes \fg \oplus \C c \subseteq \hat{\fg}$.  For $\kappa \in \C$ we let $\OO_{\kappa}$ be the full subcategory of all $\tilde{\fg}$-modules, $M$, for which the central element $c$ acts by $\kappa$ and $M$ satisfies certain category $\mathcal{O}$-type finiteness conditions.  

 Now we set $\kappa=(-\ell)/2D-h^{\vee}$, where $h^{\vee}$ is the dual Coxeter number for $\fg$ and 
 $D$ is the maximal number of edges between two nodes in the Dynkin diagram of $\fg$.
The category $\OO_{\kappa}$ admits a tensor product and duality which makes it into a braided monoidal category which is rigid (i.e.,\ with duals).
 Under mild assumptions on $\ell$, Kazhdan-Lusztig \cite{KL, KL2, KL3} and Lusztig \cite{Lusztig} prove in the simply-laced and non-simply-laced cases, respectively, that there exists a functor 
\[
F_{\ell}: \OO_{\kappa} \to \mod(U_{\zeta}(\fg))
\] which is an equivalence of braided monoidal categories.  See \cite{Tanisaki} or \cite[Section 16.3]{CP} for an overview of these results.  From this it immediately follows that the classification of thick tensor ideals in the two categories coincides and that the 
Balmer spectra for the corresponding stable module categories are homeomorphic.  That is, the appropriate analogues of Theorems~\ref{T:classification}~and~\ref{T:spcquantum} also hold for $\OO_{\kappa}$.

\subsection{Acknowledgements} The second and third authors are pleased to acknowledge the hospitality and support of the Mittag-Leffler Institute during the special semester in Representation Theory in Spring 2015. The formulations of several of the 
key ideas in the paper were discovered at this time. The authors thank Cris Negron for spotting a gap in an earlier version of Theorem~\ref{T:naturality}, and for his insights into the questions of naturality of supports for quantum groups.

\section{Preliminaries} \label{S:preliminaries}
 
\subsection{Tensor Triangulated Categories}\label{SS:TTC} For the purposes of this paper we will summarize the notion of tensor triangulated categories as defined by Balmer (cf.\ \cite[Definition 1.1]{Bal}).  We will not state all the definitions but instead refer the reader to \cite{BKN} for a full treatment. The main idea is to use the tensor structure as a ``multiplication'' and define categorical analogues of prime ideals and the spectrum. 

\begin{defn} \label{TTCdefinition} A {\em tensor triangulated category} (TTC) is a triple $(\bK, \otimes, \unit)$ such that 
\begin{itemize} 
\item[(i)] $\bK$ is a triangulated category, and 
\item[(ii)] $\bK$ has a braided\footnote{To be precise, Balmer assumes the tensor product is symmetric but his definitions and results apply equally well in the braided setting.} monoidal tensor product $\otimes: \bK \times \bK \rightarrow \bK$ which is exact in each variable with unit object $\unit$.  
\end{itemize} 
\end{defn} 

A \emph{(tensor) ideal} in $\bK$ is a triangulated subcategory $\bI$ of $\bK$ such that $M\otimes N\in \bI$ for all $M\in \bI$ and $N\in \bK$, and an ideal is called \emph{thick} if it is closed under the taking of direct summands. A \emph{prime ideal}, $\bP$, of $\bK$ is a proper thick tensor ideal such that if $M\otimes N\in \bP$, then either $M\in \bP$ or $N\in \bP$ (see \cite[Definition 2.1]{Bal}). 

The \emph{Balmer spectrum} of $\bK$ is defined to be 
$$\operatorname{Spc}(\bK)=\{\bP \subset \bK \mid \bP\ \text{is a prime ideal}\}.$$
The topology on $\operatorname{Spc}({\bK})$ is given by closed sets of the form 
$${\mathcal Z}({\mathcal C})=\{\bP\in \operatorname{Spc}(\bK) \mid {\mathcal C}\cap \bP=\varnothing \}$$ 
where ${\mathcal C}$ is a family of objects in $\bK$. 

For a given TTC, $\bK$, let $\bK^{c}$ denote the full triangulated subcategory of compact objects.  When we say $\bK$ is a compactly generated TTC we mean that $\bK$ is closed under arbitrary set indexed coproducts, the tensor product preserves set indexed coproducts, $\bK$ is compactly generated, the tensor product of compact objects is compact, $\unit$ is a compact object, and every compact object is rigid (i.e.\  strongly dualizable) as in, for example, \cite{HPS}.   In particular we have an exact contravariant duality functor $(-)^{*}:\bK^{c} \rightarrow \bK^{c}$ such that 
$$\Hom_{{\bK}}(M\otimes N, Q)=\Hom_{\bK}(N,M^{*} \otimes Q)$$ 
for $M\in \bK^{c}$ and $N,Q\in \bK$. We refer the reader to \cite[Section 2.1]{BKN} for further discussion about compactness.

\subsection{Zariski Spaces}\label{SS:zariski}  A Noetherian topological space 
$X$  is a \emph{Zariski space} if, in addition, any irreducible closed set $Y$ of $X$ has a unique generic point (i.e., there exists a unique $y\in Y$ such that $Y=\overline{\{y\}}$). 
Note that for a Noetherian topological space $X$ any closed set in $X$ is the union of finitely many irreducible closed sets. In particular we 
will be primarily interested in the cases when $X=\Spec(R)$ where $R$ is a commutative Noetherian ring, or when 
$R$ is graded, $X=\Proj(R)$. We set the following notation for a Zariski space $X$: 
\begin{itemize} 
\item[(i)] $\XX$: the collection of all subsets of $X$, 
\item[(ii)] $\XX_{sp}$: the collection of all specialization closed subsets (i.e., sets which are arbitrary unions of closed sets),
\item[(iii)] $\XX_{cl}$: the collection of all closed subsets,  
\item[(iv)] $\XX_{irr}$: the collection of all irreducible closed subsets. 
\end{itemize}

We will be interested in cases where we have an algebraic group $G$ acting rationally on a graded commutative ring $R$ by automorphisms which preserve the grading. This action induces an action of $G$ on $X=\Proj( R)$.  Let $X_{G}=G\text{-}\Proj( R)$ be the set of homogeneous $G$-prime ideals of $R$  (e.g., as defined in \cite{Lorenz2009}). The 
topology in $X_{G}$ is induced from the map $\rho:X\twoheadrightarrow X_{G}$ with $\rho( P)=\cap_{g\in G}\ gP=:\cap_{g}\ gP$ by declaring $W\subseteq X_{G}$ closed if and only if $\rho^{-1}(W)$ is closed in $X$. We have $\cap_{g}\ gP_{1}=\cap_{g}\ gP_{2}$ for $P_{1}, P_{2}\in X$ if and only if $\overline{G\cdot P_{1}}=\overline{G\cdot P_{2}}$ in $X$. 
In \cite[Section 2.3]{BKN} it was shown that $X_{G}$ is a Zariski space. 

\subsection{Localization functors}\label{SS:localization} In this section we briefly recall a key tool: the localization and colocalization 
functors as given in \cite[Proposition 2.16]{BIK:12}.  For details, the reader is referred to \cite[Section 2.2]{BIK:12}, and the general properties given in \cite{BIK}. 

Assume that the triangulated category $\bK$ admits arbitrary set indexed coproducts. 
A \emph{localizing subcategory} of $\bK$ is a  triangulated subcategory which is closed under taking set indexed coproducts. 
For $\CC$ a collection of objects of $\bK$, let $\Loc (\CC )$ 
be the smallest localizing subcategory containing $\CC$. 

The result we need is the following restatement of \cite[Theorem 2.32]{BIK:12}.  

\begin{theorem} \label{T:localizationtriangles} Let $\bK$ be a compactly generated triangulated category which admits arbitrary set indexed coproducts.  Given a thick subcategory $\bC$ of $\bK^{c}$ and an object 
$M$ in $\bK$, there exists a functorial triangle in $\bK$,
$$
\Gamma_{\bC}(M) \to M \to L_{\bC}(M) \to
$$
which is unique up to isomorphism, such that $\Gamma_{\bC}(M)$ is in $\Loc(\bC)$ and there are no non-zero maps in $\bK$ from $\bC$ or, equivalently, from $\Loc(\bC)$ to $L_{\bC}(M)$.
\end{theorem}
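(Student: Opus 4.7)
The plan is to realize the claimed triangle as a Bousfield localization triangle associated with the localizing subcategory $\Loc(\bC) \subseteq \bK$. The main technical step is to produce a right adjoint $\Gamma_{\bC}: \bK \to \Loc(\bC)$ to the inclusion $i: \Loc(\bC) \hookrightarrow \bK$. Since $\bC \subseteq \bK^{c}$ is a thick subcategory of compact objects it is essentially small, so $\Loc(\bC)$ is a localizing subcategory of the compactly generated category $\bK$ that is itself generated by a \emph{set} of compact objects. A Brown representability argument then yields the right adjoint: for fixed $M \in \bK$, the functor $\Loc(\bC)^{\text{op}} \to \mathrm{Ab}$ defined by $X \mapsto \Hom_{\bK}(X, M)$ is cohomological and converts coproducts to products, hence is representable by an object $\Gamma_{\bC}(M) \in \Loc(\bC)$. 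Functoriality of $\Gamma_{\bC}$ in $M$ follows automatically from Yoneda.

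With the adjoint in hand, I would take the counit $\varepsilon_{M}: \Gamma_{\bC}(M) \to M$ and embed it into a distinguished triangle
$$
\Gamma_{\bC}(M) \to M \to L_{\bC}(M) \to .
$$
To verify the orthogonality, apply $\Hom_{\bK}(X, -)$ for $X \in \bC$ to this triangle. The adjunction gives $\Hom_{\bK}(X, \Gamma_{\bC}(M)) \cong \Hom_{\bK}(X, M)$, and since $\bC$ is closed under shifts the same holds with $X$ replaced by any shift of itself; the long exact sequence then forces $\Hom_{\bK}(X, L_{\bC}(M)) = 0$. The equivalent vanishing on all of $\Loc(\bC)$ follows because the contravariant cohomological functor $\Hom_{\bK}(-, L_{\bC}(M))$ sends coproducts to products and distinguished triangles to long exact sequences, so the subcategory of objects it annihilates is itself localizing and contains $\bC$, hence contains $\Loc(\bC)$.

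For uniqueness, suppose $A \xrightarrow{f} M \to B \to$ is another such triangle, with $A \in \Loc(\bC)$ and $\Hom_{\bK}(\bC, B) = 0$. The adjunction yields a unique $\tilde{f}: A \to \Gamma_{\bC}(M)$ with $\varepsilon_{M} \circ \tilde{f} = f$, and symmetrically a unique $\tilde{g}: \Gamma_{\bC}(M) \to A$ with $f \circ \tilde{g} = \varepsilon_{M}$; the uniqueness clauses force $\tilde{f}$ and $\tilde{g}$ to be mutually inverse. The axiom TR3 extends the resulting commutative square to a morphism of triangles, and the orthogonality enjoyed by both $L_{\bC}(M)$ and $B$ pins down the induced map on third vertices uniquely; the triangulated five-lemma then shows it is an isomorphism, and functoriality of the whole triangle in $M$ follows by applying the same uniqueness to morphisms $M \to M'$.

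The main obstacle is the existence of the right adjoint via Brown representability; this is precisely where the hypotheses that $\bK$ is compactly generated and $\bC \subseteq \bK^{c}$ enter in an essential way, since without compactness one cannot guarantee that $\Loc(\bC)$ is well-generated and Brown representability can fail. Once the adjoint is in place, the construction of the triangle and its uniqueness and functoriality are formal consequences of the axioms for triangulated categories.
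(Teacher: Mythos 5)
Your argument is correct and is exactly the standard Bousfield-localization construction underlying the result the paper cites: the paper offers no proof of its own, presenting the statement as a restatement of \cite[Theorem 2.32]{BIK:12}, whose proof proceeds just as you describe (essential smallness of $\bC$, Brown representability for the compactly generated subcategory $\Loc(\bC)$ to get the right adjoint, completion of the counit to a triangle, and the formal orthogonality/uniqueness arguments). No gaps.
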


\subsection{Quasi Support Data} \label{SS:supportdata} Balmer \cite{Bal} provided a definition of support data as a method to relate objects in a TTC to  subsets in a given Zariski space. 
For our purposes we will need a weaker notion which relaxes the condition on the tensor products of objects. We will call this notion 
a quasi support data. 

Let $\bK$ be a TTC (possibly $\bK=\bK^{c}$), and $X$ be a Zariski space. 
A {\em quasi support data} is an assignment $V:\bK\rightarrow \XX$ which satisfies the following six properties (for $M, M_{i}, N, Q \in \bK$): 
\begin{eqnlist}
\item \label{E:supportone} $V(0)=\varnothing$, $V(\unit)=X$;
\item \label{E:supporttwo} $V( \oplus_{i \in I} M_{i})=\bigcup_{i \in I} V(M_{i})$ whenever $ \oplus_{i \in I} M_{i}$ is an object of $\bK$;
\item \label{E:supportthree} $V(\Sigma M)=V(M)$, where $\Sigma$ is the shift functor for $\bK$;
\item \label{E:supportfour} for any distinguished triangle $M\rightarrow N \rightarrow Q \rightarrow \Sigma M$ we have 
$$V(N)\subseteq V(M)\cup V(Q);$$ 
\item \label{E:supportfive} $V(M\otimes N)\subseteq V(M)\cap V(N)$;
\end{eqnlist} 
We note that using the above properties and \cite[Lemma A.2.6]{HPS} one can verify, when $\bK$ is a compactly generated TTC, that 
\leqnomode
\begin{align}\label{E:supportsix} 
V(M)=V(M^{*}) \text{ for } M\in\bK^{c}.
\end{align}
\reqnomode

We will be interested in quasi support data which satisfy one or both of two additional properties: 
\begin{eqnlist} 
\item \label{E:supportseven} $V(M)=\varnothing$ if and only if $M=0$;
\item \label{E:supporteight} for any $W\in \XX_{cl}$ there is an $M\in \bK^{c}$ such that $V(M)=W$ (Realization Property). 
\end{eqnlist} 

\subsection{An Additional Assumption on a TTC} Let $V:\bK^{c}\rightarrow \XX_{cl}$ be a quasi support data, as above. Given an object $M\in\bK^{c}$, let $\Tensor(M)\subseteq \bK^{c}$ be the thick tensor ideal in $\bK^{c}$ generated by $M$. 
For $W\in {\mathcal X}$, set 
\begin{equation} \label{E:IWdefinition}
\bI_{W}=\{Q\in \bK^{c} \mid V(Q)\subseteq W \}.
\end{equation}
By (\ref{SS:supportdata}.\ref{E:supporttwo})--(\ref{SS:supportdata}.\ref{E:supportfive}), $\bI_{W}$ is a thick tensor ideal of $\bK^{c}$. 
We state the following key assumption on $\bK$ (which will be verified later for quantum groups). 

\begin{assumption} \label{A:projectivity} Suppose $M\in \bK^{c}$ with $M\neq 0$, $N\in \bI_{V(M)}\subseteq \bK^{c}$, and $\bI'=\Tensor(M)$. If 
$M\otimes L_{\bI'} (N)=0$, then $L_{\bI'} (N)=0$. 
\end{assumption} 

In many circumstances (e.g., finite group schemes, Lie superalgebras) Assumption~\ref{A:projectivity} is verified by showing the 
existence of an extension of support data from $\bK^{c}$ to $\bK$ which satisfies the tensor product property (see \cite[Section 3.2]{BKN} for a discussion of extending support data). For quantum groups, suitably extending the quasi support data to non-compact objects is an open problem.  In this situation alternative methods will be necessary to verify Assumption~\ref{A:projectivity}. 

\subsection{Hopkins' Theorem}  We will now explain how one can use Assumption~\ref{A:projectivity} to prove a version of Hopkins' theorem. A proof is included for the reader to 
see where Assumption~\ref{A:projectivity} is used as a replacement for the extension of support data. 

\begin{prop} \label{T:Hopkins} Let $\bK$ be a compactly generated TTC, $X$ be a Zariski space, and 
$\X_{cl}$ be the collection of closed subsets of $X$. Let $V:\bK^{c}\to \X_{cl}$ be a quasi support data defined on $\bK^{c}$ satisfying the additional condition (\ref{SS:supportdata}.\ref{E:supportseven}), with Assumption~\ref{A:projectivity} also holding. Given 
$M\in\bK^{c}$, set $W= V(M)
$.
Then $\bI_{W}=\Tensor(M)$.
\end{prop}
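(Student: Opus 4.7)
The plan is to prove both containments $\Tensor(M)\subseteq \bI_{W}$ and $\bI_{W}\subseteq \Tensor(M)$. The first is immediate: since $V(M)=W$, the object $M$ lies in $\bI_{W}$; because $\bI_{W}$ is a thick tensor ideal of $\bK^{c}$ (as observed just after its definition in \cref{E:IWdefinition}), it must contain the thick tensor ideal $\Tensor(M)$ generated by $M$. The substantive direction is the reverse inclusion, and my strategy is to use the localization triangle of \cref{T:localizationtriangles} to reduce it to a single invocation of \cref{A:projectivity}.

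Fix $N\in \bI_{W}$ and set $\bI'=\Tensor(M)$. \cref{T:localizationtriangles} supplies a triangle
\[
\Gamma_{\bI'}(N)\to N\to L_{\bI'}(N)\to
\]
with $\Gamma_{\bI'}(N)\in\Loc(\bI')$ and with no non-zero maps from $\Loc(\bI')$ into $L_{\bI'}(N)$. Tensoring by $M$ produces the triangle $M\otimes\Gamma_{\bI'}(N)\to M\otimes N\to M\otimes L_{\bI'}(N)\to$. The key claim is that $M\otimes L_{\bI'}(N)=0$. On one hand, both of the first two terms lie in $\Loc(\bI')$: since $M, N\in\bK^{c}$ and $\bI'$ is a tensor ideal of $\bK^{c}$, we have $M\otimes N\in\bI'\subseteq \Loc(\bI')$, and a short localizing-subcategory argument (using that $-\otimes Y$ preserves coproducts for fixed $Y$, together with the compact-generation of $\bK$) shows that $\Loc(\bI')$ is itself a tensor ideal in $\bK$, forcing $M\otimes\Gamma_{\bI'}(N)\in\Loc(\bI')$; hence $M\otimes L_{\bI'}(N)\in\Loc(\bI')$ as well. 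On the other hand, rigidity of the compact object $M$ gives, for any $C\in\bI'$,
\[
\Hom_{\bK}(C, M\otimes L_{\bI'}(N))\cong \Hom_{\bK}(M^{*}\otimes C, L_{\bI'}(N))=0,
\]
because $M^{*}\otimes C\in\bI'$ and $L_{\bI'}(N)$ is right-orthogonal to $\Loc(\bI')$. Taking $C= M\otimes L_{\bI'}(N)$ itself and examining the identity morphism yields $M\otimes L_{\bI'}(N)=0$. \cref{A:projectivity} then gives $L_{\bI'}(N)=0$, so $N\cong \Gamma_{\bI'}(N)\in\Loc(\bI')$.

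Finally, compactness of $N$ combined with $N\in \Loc(\bI')$ forces $N\in \bI'$, by the standard identification $\Loc(\bI')\cap \bK^{c}=\bI'$ (due to Neeman) valid for any thick subcategory $\bI'\subseteq \bK^{c}$. I expect the main obstacle to be verifying the two properties of $M\otimes L_{\bI'}(N)$---that it lies in $\Loc(\bI')$ and is right-orthogonal to it---since this is precisely where the rigidity of compact objects and the tensor-ideal structure of $\Loc(\bI')$ must be combined with the localization triangle; this is also the one place in the argument that truly requires the compactly generated TTC hypothesis rather than just the axioms of a quasi support data. Once $M\otimes L_{\bI'}(N)=0$ has been established, \cref{A:projectivity} is invoked in a purely formal way, and the concluding passage from $\Loc(\bI')$ back to $\bI'$ via compactness is standard.
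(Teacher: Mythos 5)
Your argument is correct and follows the paper's proof in all essentials: the localization triangle for $\bI'=\Tensor(M)$, a vanishing statement for the tensor of $M$ with $L_{\bI'}(N)$, an appeal to Assumption~\ref{A:projectivity}, and Neeman's lemma to pass from $N\in\Loc(\bI')$ back to $N\in\bI'$. The one place you genuinely diverge is the vanishing step. The paper computes $0=\Hom_{\bK}(M\otimes S, L_{\bI'}\Gamma_{\bI}(N))\cong\Hom_{\bK}(S,M^{*}\otimes L_{\bI'}\Gamma_{\bI}(N))$ for every compact $S$ and invokes compact generation to conclude $M^{*}\otimes L_{\bI'}(N)=0$, then applies the Assumption (which, as stated, concerns $M\otimes L_{\bI'}(N)$, so the paper is implicitly using $V(M)=V(M^{*})$ and $\Tensor(M)=\Tensor(M^{*})$). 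You instead show that $M\otimes L_{\bI'}(N)$ both lies in $\Loc(\bI')$ and receives no nonzero maps from $\bI'$ (hence, after the standard upgrade, from $\Loc(\bI')$, which is needed before you may test against $M\otimes L_{\bI'}(N)$ itself, since that object is only in $\Loc(\bI')$ and not in $\bI'$), so its identity map vanishes. This produces $M\otimes L_{\bI'}(N)=0$ in exactly the form Assumption~\ref{A:projectivity} requires, at the modest extra cost of verifying that $\Loc(\bI')$ is a tensor ideal. Two small points: the Assumption presupposes $M\neq 0$, so you should dispose of the case $M=0$ separately as the paper does (it is immediate from (\ref{SS:supportdata}.\ref{E:supportseven}), since then $\bI_{\varnothing}=\{0\}=\Tensor(0)$); and your adjunction $\Hom(C,M\otimes L)\cong\Hom(M^{*}\otimes C,L)$ uses $M^{**}\cong M$, i.e.\ rigidity of compact objects, which is available but worth flagging.
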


\begin{proof} First note that if $M=0$ then $V(M)=\varnothing=W$. Moreover, $\Tensor(M)$ and $I_\varnothing$ just consist of $0$, by (\ref{SS:supportdata}.\ref{E:supportseven}). Therefore, we can assume that $M\neq 0$.  
Set $\bI=\bI_{W}$ and $ \bI'=\Tensor(M)$. 

\medskip
\noindent ($\supseteq$) Since $\Tensor(M)$ is the smallest thick tensor ideal of $\bK^{c}$ containing $M$, from the definition of $\bI_{W}$ and the properties of a quasi support data it follows that $\bI \supseteq \bI'$.

\medskip
\noindent ($\subseteq$) Let $N\in\bK$. Apply the exact triangle of functors $\Gamma_{\bI'}\to\text{Id} \to L_{\bI'}\to\ $ to $\Gamma_{\bI}(N)$:
\begin{equation} \label{E:ExactTriangle}
\Gamma_{\bI'}\Gamma_{\bI}(N) \to \Gamma_{\bI}(N) \to L_{\bI'} \Gamma_{\bI}(N) \to
\end{equation}
Since $\bI' \subseteq \bI$, the first term belongs to $\Loc(\bI') \subseteq \Loc(\bI)$. The second term also belongs to $\Loc(\bI)$, a triangulated subcategory, and hence so does the third term: $L_{\bI'}\Gamma_{\bI}(N) \in \Loc(\bI)$. 

By \cref{T:localizationtriangles} (applied to $\bI'$), there are no non-zero maps from $\bI'$ to $L_{\bI'}\Gamma_{\bI}(N)$. Thus, for any object $S\in\bK^{c}$, the duality property implies 
that 
\begin{equation} \label{E:ExactNTriangle}
0=\Hom_{\bK}(M\otimes S, L_{\bI'}\Gamma_{\bI}(N)) \cong \Hom_{\bK}(S, M^{*}\otimes L_{\bI'}\Gamma_{\bI}(N)).
\end{equation}
Since $\bK$ is compactly generated it follows that 
$M^{*}\otimes L_{\bI'}\Gamma_{\bI}(N)=0$ in $\bK$. 
 
Now specialize to $N\in\bI$. Then we have $\Gamma_{\bI}(N) \cong N$, and $M^{*}\otimes L_{\bI'}(N)=0$ in $\bK$. Therefore, by Assumption~\ref{A:projectivity},  
$L_{\bI'}\Gamma_{\bI}(N)= L_{\bI'}(N)=0$ in $\bK$. By \cref{E:ExactTriangle} it follows that $\Gamma_{\bI}(N) \cong \Gamma_{\bI'}\Gamma_{\bI}(N)$, so 
$\Gamma_{\bI'}(N) \cong N$, whence $N\in\operatorname{Loc}\left( \bI'\right)$. Applying \cite[Lemma 2.2]{Nee} we see that in fact $N \in \bI'$.  This shows $\bI\subseteq\bI'$ and completes the proof.
\end{proof} 

\subsection{The Tensor Product Property}\label{SS:tensorproductproperty} One can use Hopkins' Theorem and the Realization Property to prove that a quasi support data has the tensor product property and, hence, is a support data. 
The tensor product property is used in proving Theorem~\ref{K:bijectiongeneral} to show that the map $f$ takes points in $X$ to prime ideals. 

\begin{lemma}\label{L:TensorProductonIdeals} Let $\bK$ be a TTC and let $M,N \in \bK^{c}$ be fixed.  For any objects $A \in \Tensor (M)$ and $B \in \Tensor (N)$ one has $A \otimes B \in \Tensor (M \otimes N)$ 
\end{lemma}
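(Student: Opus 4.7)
The plan is to carry out a standard two-step closure argument by introducing auxiliary subcategories cut out by the desired containment.

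First, fix the ambient thick tensor ideal $\bI_0:=\Tensor(M\otimes N)$ in $\bK^c$, and define
\[
\bJ=\{A\in \bK^c \mid A\otimes N\in \bI_0\}.
\]
I would verify that $\bJ$ is a thick tensor ideal of $\bK^c$ containing $M$. Membership of $M$ is immediate since $M\otimes N\in\Tensor(M\otimes N)$ tautologically. To see that $\bJ$ is a triangulated subcategory, note that $-\otimes N$ is exact in each variable, so $\bJ$ is closed under shifts and under cones of morphisms. Thickness (closure under direct summands) follows because if $(A\oplus A')\otimes N\cong (A\otimes N)\oplus(A'\otimes N)$ lies in $\bI_0$, then each summand does, as $\bI_0$ is itself thick. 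Finally, for any $C\in\bK^c$ and $A\in \bJ$, associativity of the tensor product gives $(C\otimes A)\otimes N\cong C\otimes(A\otimes N)\in\bI_0$, since $\bI_0$ is a tensor ideal; hence $\bJ$ is closed under tensoring with objects of $\bK^c$. By minimality of $\Tensor(M)$, this yields $\Tensor(M)\subseteq \bJ$, so $A\otimes N\in\Tensor(M\otimes N)$ for every $A\in\Tensor(M)$.

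Second, fix any $A\in\Tensor(M)$ and define
\[
\bJ_A=\{B\in\bK^c \mid A\otimes B\in \Tensor(M\otimes N)\}.
\]
By the first step, $N\in\bJ_A$. Exactly the same checks as above (using exactness of $A\otimes -$, thickness of $\Tensor(M\otimes N)$, and associativity of $\otimes$ to absorb tensors with arbitrary $C\in\bK^c$) show that $\bJ_A$ is a thick tensor ideal of $\bK^c$. Minimality then yields $\Tensor(N)\subseteq \bJ_A$, which is exactly the desired statement $A\otimes B\in\Tensor(M\otimes N)$ for every $B\in\Tensor(N)$.

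There is no serious obstacle in this argument; the only point requiring a bit of care is to use the correct ambient category when invoking the tensor–ideal axiom, namely that $\Tensor(M\otimes N)$ absorbs tensor products with objects of $\bK^c$ (so that the step ``$(C\otimes A)\otimes N\in\bI_0$'' is legitimate), and to use the exactness of $-\otimes N$ and $A\otimes -$ in each variable in order to see that $\bJ$ and $\bJ_A$ really are triangulated subcategories. Braiding plays no role, and neither do any of the finer hypotheses such as compact generation or rigidity; the lemma is a purely formal consequence of the definition of $\Tensor(-)$.
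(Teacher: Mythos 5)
Your proof is correct, but it is organized differently from the paper's. The paper argues by explicit induction: it lists the four generating operations for $\Tensor(Q)$ (shift, triangle completion, tensoring with an arbitrary object, passing to direct summands), defines $\ell_{Q}(A)$ as the minimal number of operations needed to reach $A$ from $Q$, and inducts on $\ell_{M}(A)+\ell_{N}(B)$, checking that each operation preserves the containment $A\otimes B\in\Tensor(M\otimes N)$. You instead run the standard ``auxiliary ideal'' closure argument: show that $\bJ=\{A \mid A\otimes N\in\Tensor(M\otimes N)\}$ is a thick tensor ideal containing $M$, hence contains $\Tensor(M)$, and then repeat with $\bJ_{A}$ in the second variable. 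The two arguments use exactly the same underlying facts (exactness of $\otimes$ in each variable, associativity, distributivity over $\oplus$, thickness of the target ideal), but your version avoids introducing the length function and the case analysis, and is the more common formulation in the tensor-triangular literature; the paper's version makes the dependence on the generating operations completely explicit. One small caveat: your closing remark that ``braiding plays no role'' is not quite right. The paper's definition of a tensor ideal is literally one-sided ($M\otimes N\in\bI$ for $M\in\bI$, $N\in\bK$), so to verify that $\bJ$ absorbs tensoring --- e.g.\ to pass from $C\otimes(A\otimes N)\in\Tensor(M\otimes N)$ to $(A\otimes C)\otimes N\in\Tensor(M\otimes N)$ --- you must commute factors past each other, which uses the braiding (equivalently, the identification of left and right ideals in the braided setting). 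This is harmless here, but it is the same point the authors must handle carefully in Section~\ref{S:twosidedb}, where the tensor product is no longer braided and only two-sided ideals behave well.
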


\begin{proof} By definition $\Tensor (Q)$ is the smallest tensor ideal of $\bK^{c}$ containing $Q \in \bK^{c}$.  That is, any object of $\Tensor (Q)$ can be obtained from $Q$ by a finite sequence of the following operations:
\begin{enumerate}
\item If $A \in \Tensor (Q)$, then $\Sigma A \in \Tensor (Q)$;
\item If $A \to B \to C \to $ is a triangle in $\bK$ and two of $A,B,C$ are objects in $\Tensor(Q)$, then the third is an object of $\Tensor (Q)$;
\item If $A \in \Tensor (Q)$ and $B \in \bK$, then $A \otimes B \in \Tensor (Q)$;
\item If $A \oplus B$ is an object of $\Tensor(Q)$, then $A,B$ are objects of $\Tensor (Q)$.
\end{enumerate}  Given $A \in \Tensor (Q)$, write $\ell_{Q}(A) \in \Z_{\geq 0}$ for the minimal number of operations required to obtain $A$ from $Q$. 

The result is proved by induction on $\ell_{M}(A) + \ell_{N}(B)$.  If this sum equals zero, then $A \cong M$ and $B \cong N$ and the result holds trivially.  Now assume that $A \in \Tensor (M)$ and $B \in \Tensor (N)$ are given.  Without loss of generality assume $\ell_{M}(A) > 0$.  Then there are $A', A'' \in \Tensor (M)$ such that $\ell_{M}(A'),\, \ell_{M}(A'') < \ell_{M}(A)$ and  $A$ is obtained from $A'$ and $A''$ by one of the above operations.  If $A = \Sigma A'$, then  $A' \otimes B \in \Tensor (M \otimes N)$ and $A \otimes B = (\Sigma A') \otimes B \cong \Sigma (A' \otimes B)$, where the isomorphism holds by the defining properties of a TTC.  Thus $A \otimes B \in \Tensor (M \otimes N)$.  If $A_{1} \to A_{2} \to A_{3} \to$ is a distinguished triangle in $\bK$ with $\{A, A', A'' \} = \{A_{1}, A_{2}, A_{3} \}$, then one can apply the exact functor $- \otimes B$ to obtain the triangle $A_{1} \otimes B \to A_{2}\otimes B \to A_{3}\otimes B \to $.  By induction two of the three terms lie in $\Tensor (M\otimes N)$ and, hence, so does $A \otimes B$.  If $A = A' \otimes C$ for some $C$, then $A \otimes B = (A' \otimes C) \otimes B \cong (A'\otimes B)\otimes C$.  This along with the fact that $A' \otimes B$ lies in $\Tensor (M\otimes N)$ implies $A \otimes B$ also lies in $\Tensor (M \otimes N)$.  Similarly, if $A' = A \oplus C$ for some $C$ in $\bK^{c}$, then $A'\otimes B = (A \oplus C) \otimes B \cong (A \otimes B) \oplus (C \otimes B)$ lies in $\Tensor (M \otimes N)$ and so $A \otimes B$ does as well.  Therefore, the inductive step holds and the result is proven.
\end{proof}

\begin{prop}\label{P:TensorProductProperty}  Let $\bK$ be a compactly generated TTC, $X$ be a Zariski space, and let $V:\bK^{c}\to \X_{cl}$ be a quasi support data defined on $\bK^{c}$.  If $V$ satisfies both the statement of Hopkins' Theorem and  (\ref{SS:supportdata}.\ref{E:supporteight}), then 
\[
V(M \otimes N) = V(M) \cap V(N)
\] for any $M$ and $N$ in $\bK^{c}$. 
\end{prop}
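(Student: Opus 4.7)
My plan is to establish the nontrivial inclusion $V(M) \cap V(N) \subseteq V(M \otimes N)$; the reverse inclusion is immediate from (\ref{SS:supportdata}.\ref{E:supportfive}). First I will apply the Realization Property~(\ref{SS:supportdata}.\ref{E:supporteight}) to the closed subset $V(M) \cap V(N)$ of $X$ to produce an object $P \in \bK^{c}$ with $V(P) = V(M) \cap V(N)$. Since $V(P) \subseteq V(M)$ and $V(P) \subseteq V(N)$, Hopkins' Theorem (which is part of our hypotheses) yields $P \in \bI_{V(M)} = \Tensor(M)$ and $P \in \bI_{V(N)} = \Tensor(N)$. Applying \cref{L:TensorProductonIdeals} with $A = B = P$, viewed respectively as an element of $\Tensor(M)$ and of $\Tensor(N)$, then gives $P \otimes P \in \Tensor(M \otimes N)$.

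The principal obstacle will be to upgrade $P \otimes P \in \Tensor(M \otimes N)$ to $P \in \Tensor(M \otimes N)$. Hopkins' Theorem by itself is insufficient here, because applying it simply reduces this to $V(P) \subseteq V(M \otimes N)$---precisely the tensor product property we are trying to prove. To avoid this circularity I will exploit the rigid structure on $\bK^{c}$ built into the definition of a compactly generated TTC. Since $P \in \bK^{c}$ is strongly dualizable with dual $P^{*} \in \bK^{c}$, the zig-zag identity
\[
\operatorname{id}_{P} = (\operatorname{id}_{P} \otimes \operatorname{ev}_{P}) \circ (\operatorname{coev}_{P} \otimes \operatorname{id}_{P})
\]
realizes $P$ as a direct summand of $P \otimes P^{*} \otimes P$. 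The braiding in $\bK$ supplies an isomorphism $P \otimes P^{*} \otimes P \cong (P \otimes P) \otimes P^{*}$, and the right-hand side lies in $\Tensor(P \otimes P)$ because this is a tensor ideal. Since thick subcategories are closed under summands, we obtain $P \in \Tensor(P \otimes P) \subseteq \Tensor(M \otimes N)$.

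To finish, applying Hopkins' Theorem to the compact object $M \otimes N$ gives $\Tensor(M \otimes N) = \bI_{V(M \otimes N)}$, so the membership $P \in \Tensor(M \otimes N)$ is equivalent to $V(P) \subseteq V(M \otimes N)$. Combined with the identity $V(P) = V(M) \cap V(N)$ from our initial choice of $P$, this produces $V(M) \cap V(N) \subseteq V(M \otimes N)$, which together with (\ref{SS:supportdata}.\ref{E:supportfive}) delivers the desired equality.
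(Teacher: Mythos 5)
Your proof is correct and follows essentially the same route as the paper: realize $V(M)\cap V(N)$ by a compact object via (\ref{SS:supportdata}.\ref{E:supporteight}), use Hopkins' Theorem twice to place it (or its dual) in $\Tensor(M)$ and $\Tensor(N)$, invoke \cref{L:TensorProductonIdeals}, and extract the object as a direct summand of the triple tensor product using rigidity. The only cosmetic difference is that the paper feeds $T\in\Tensor(M)$ and $T^{*}\in\Tensor(N)$ into the lemma to get $T\otimes T^{*}\otimes T\in\Tensor(M\otimes N)$ directly, whereas you feed $P$ into both slots and then tensor with $P^{*}$ afterwards, using the braiding to rearrange factors.
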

\begin{proof} Since $V$ is a quasi-support data only the inclusion $ V(M) \cap V(N) \subseteq V(M \otimes N) $ needs to be proved.  Since $V(M)$ and $V(N)$ are closed sets, the intersection $V(M) \cap V(N)$ is a closed set.  Using the Realization Property, fix a $T \in \bK^{c}$ with $V(T)=V(T^{*})=V(M) \cap V(N)$.  Since $V(T)  \subseteq V(M)$, Hopkins' Theorem implies $T \in \Tensor (M)$.  Similarly, $T^{*} \in \Tensor (N)$.  By \cref{L:TensorProductonIdeals} it follows that $T \otimes T^{*} \in \Tensor (M \otimes N)$ and so $T \otimes T^{*}\otimes T$ is an object of $\Tensor (M\otimes N)$.  By \cite[Lemma A.2.6]{HPS}, $T$ is a direct summand of $T \otimes T^{*} \otimes T$. That is, $T$ is an object of $\Tensor (M\otimes N)$ and so $V(T) \subseteq V( M \otimes N)$. That is, $V(M) \cap V(N) \subseteq V(M\otimes N)$ as required.
\end{proof}

\subsection{Thick Tensor Ideals in a TTC and Spc} \label{SS:classifying} One can now use the version of Hopkins' Theorem given in the previous section (\cref{T:Hopkins}) 
and apply the same arguments as in \cite[Theorem 3.4.1]{BKN} to obtain the following classification of thick tensor ideals of compact objects in our setting.  

\begin{theorem} \label{I:bijectiongeneral} Let $\bK$ be a compactly generated TTC.  Let $X$ be a Zariski space, $\X_{cl}$ be the set of all closed subsets of $X$, and let
$V:\bK^{c} \rightarrow \XX_{cl}$ be a quasi support data defined on $\bK^{c}$ satisfying the additional conditions (\ref{SS:supportdata}.\ref{E:supportseven}) and (\ref{SS:supportdata}.\ref{E:supporteight}).  Assume Assumption~\ref{A:projectivity} holds. 

Given the above setup there is a pair of mutually inverse maps
$$
\{\text{thick tensor ideals of $\bK^{c}$}\}  \begin{array}{c} \gf{\Gamma}{\longrightarrow} \\ \gf{\longleftarrow}{\Theta} \end{array} \XX_{sp},
$$
given by 
\begin{align*}
\Gamma(\bI)=\bigcup_{M\in \bI} V(M),\ \ \ \ 
 \Theta(W) = \bI_{W}.
\end{align*}
\end{theorem}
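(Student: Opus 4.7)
The proof is a formal bijection argument once the version of Hopkins' theorem (\cref{T:Hopkins}) and the Realization Property (\ref{SS:supportdata}.\ref{E:supporteight}) are available; the plan follows the strategy of \cite[Theorem 3.4.1]{BKN}.

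\textbf{Well-definedness.} I would first note that $\Theta(W)=\bI_W$ is a thick tensor ideal of $\bK^c$: closure under shifts, triangles, summands, and tensoring with objects of $\bK^c$ follows directly from axioms (\ref{SS:supportdata}.\ref{E:supporttwo})--(\ref{SS:supportdata}.\ref{E:supportfive}). On the other side, $\Gamma(\bI)=\bigcup_{M\in\bI} V(M)$ is a union of closed subsets and hence specialization closed by definition of $\XX_{sp}$.

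\textbf{$\Gamma\circ\Theta=\mathrm{id}_{\XX_{sp}}$.} For $W\in\XX_{sp}$, the inclusion $\Gamma(\Theta(W))\subseteq W$ is immediate from the definition of $\bI_W$. For the reverse inclusion, write $W$ as a union of closed subsets; the Realization Property supplies, for each such closed piece $W_\alpha$, an object $M_\alpha\in\bK^c$ with $V(M_\alpha)=W_\alpha$. Then $M_\alpha\in\bI_W$ and $W_\alpha\subseteq\Gamma(\bI_W)$, hence $W\subseteq\Gamma(\Theta(W))$.

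\textbf{$\Theta\circ\Gamma=\mathrm{id}$.} Given a thick tensor ideal $\bI$, the inclusion $\bI\subseteq\bI_{\Gamma(\bI)}$ is tautological. For the reverse inclusion, take $N\in\bI_{\Gamma(\bI)}$, so $V(N)\subseteq\bigcup_{M\in\bI}V(M)$. Since $X$ is a Zariski space, $V(N)$ has finitely many irreducible components $Z_1,\dots,Z_k$, each possessing a unique generic point $\eta_i$. Each $\eta_i$ lies in some $V(M_i)$ with $M_i\in\bI$, and closedness of $V(M_i)$ forces $Z_i=\overline{\{\eta_i\}}\subseteq V(M_i)$. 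Set $M:=M_1\oplus\cdots\oplus M_k$, which lies in $\bI$; by (\ref{SS:supportdata}.\ref{E:supporttwo}) we obtain $V(N)\subseteq V(M)$, so $N\in\bI_{V(M)}$. If $M=0$, then axiom (\ref{SS:supportdata}.\ref{E:supportseven}) yields $N=0\in\bI$. Otherwise \cref{T:Hopkins} identifies $\bI_{V(M)}$ with $\Tensor(M)\subseteq\bI$, giving $N\in\bI$.

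\textbf{Expected obstacle.} The only nontrivial step is the reduction from the possibly huge union $\bigcup_{M\in\bI}V(M)$ to a \emph{finite} family whose joint support dominates $V(N)$. This is where the Zariski-space hypothesis (Noetherianity and existence of generic points) is essential, since $\bK^c$ need not admit infinite coproducts and we must end up with an object of $\bI\subseteq\bK^c$. Once this finite reduction is achieved, Hopkins' theorem immediately finishes the argument; the remaining verifications are formal.
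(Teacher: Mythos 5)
Your argument is correct and is essentially the proof the paper intends: the paper simply cites \cite[Theorem 3.4.1]{BKN}, whose argument is exactly your reduction of $V(N)\subseteq\bigcup_{M\in\bI}V(M)$ to finitely many irreducible components via generic points, followed by an application of \cref{T:Hopkins} to the finite direct sum, together with the Realization Property for the other composite. No gaps; the separate treatment of $M=0$ via (\ref{SS:supportdata}.\ref{E:supportseven}) is also consistent with how \cref{T:Hopkins} handles that case.
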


By \cref{P:TensorProductProperty} a quasi support data which satisfies the assumptions of the previous theorem is a support data.  One can argue as in the proof of \cite[Theorem 3.5.1]{BKN} to obtain the following description of the Balmer spectrum of $\bK^{c}$.

\begin{theorem} \label{K:bijectiongeneral} Let $\bK$ be a compactly generated TTC, let $X$ be a Zariski space, and let $\X_{cl}$ be the set of all closed subsets of $X$. Assume that  
$V:\bK^{c} \rightarrow \XX_{cl}$ is a quasi support data defined on $\bK^{c}$ satisfying the additional conditions (\ref{SS:supportdata}.\ref{E:supportseven}) and (\ref{SS:supportdata}.\ref{E:supporteight}). Assume Assumption~\ref{A:projectivity} holds.  Then there is a homeomorphism
$$
f: X  \to \operatorname{Spc}(\bK^{c}).
$$
\end{theorem}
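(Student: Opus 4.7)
The plan is to follow the pattern of \cite[Theorem 3.5.1]{BKN} with the Balmer candidate map
\[
f: X \to \Spc(\bK^{c}), \qquad f(x) = \{M \in \bK^{c} \mid x \notin V(M)\}.
\]
The goal is to verify (a) each $f(x)$ is a prime tensor ideal, (b) $f$ is a bijection, and (c) $f$ is both continuous and closed, hence a homeomorphism.

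For (a), the quasi support data axioms (\ref{SS:supportdata}.\ref{E:supportone})--(\ref{SS:supportdata}.\ref{E:supportfive}) make each $f(x)$ a thick tensor ideal: $\unit\notin f(x)$ by (\ref{SS:supportdata}.\ref{E:supportone}), shifts and triangles are handled by (\ref{SS:supportdata}.\ref{E:supportthree}) and (\ref{SS:supportdata}.\ref{E:supportfour}), direct summands follow from (\ref{SS:supportdata}.\ref{E:supporttwo}), and the tensor-absorption property from (\ref{SS:supportdata}.\ref{E:supportfive}). Primality is where the tensor product property enters: since the hypotheses of \cref{T:Hopkins} hold, \cref{P:TensorProductProperty} applies to give $V(M\otimes N) = V(M)\cap V(N)$, so $M\otimes N\in f(x)$ forces $M\in f(x)$ or $N\in f(x)$.

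For (b), injectivity uses the Zariski space axiom: if $x\neq y$, uniqueness of generic points for irreducible closed sets yields either $x\notin\overline{\{y\}}$ or $y\notin\overline{\{x\}}$, and Realization (\ref{SS:supportdata}.\ref{E:supporteight}) provides an $M\in\bK^{c}$ realizing the appropriate closure to separate $f(x)$ from $f(y)$. For surjectivity, let $\bP$ be prime. By \cref{I:bijectiongeneral} there is $W=\Gamma(\bP)\in\X_{sp}$ with $\bP=\bI_{W}$. Consider $\mathcal{F}=\{C\in\X_{cl}\mid C\not\subseteq W\}$; using Realization to realize $C_{1},C_{2}\in\mathcal{F}$ as $V(M_{1}),V(M_{2})$ and invoking primality of $\bP$ together with the tensor product property shows $C_{1}\cap C_{2}\in\mathcal{F}$. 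By Noetherianity, $\mathcal{F}$ admits a minimal element $C_{0}$, which must be irreducible (a proper decomposition $C_{0}=C_{1}\cup C_{2}$ would force $C_{1},C_{2}\subseteq W$, hence $C_{0}\subseteq W$, a contradiction). The Zariski axiom gives $C_{0}=\overline{\{x\}}$ for a unique $x\in X$. One then checks, for any $M\in\bK^{c}$, that $x\in V(M)$ implies $C_{0}\subseteq V(M)$ so $V(M)\not\subseteq W$, while $V(M)\not\subseteq W$ places $V(M)\cap C_{0}\in\mathcal{F}$ inside $C_{0}$, forcing $V(M)\cap C_{0}=C_{0}$ by minimality and hence $x\in V(M)$; this gives $\bP=\bI_{W}=f(x)$.

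For (c), the Balmer topology on $\Spc(\bK^{c})$ has closed basis $\{\operatorname{supp}(M)\mid M\in\bK^{c}\}$ with $\operatorname{supp}(M)=\{\bP\mid M\notin\bP\}$. One computes $f^{-1}(\operatorname{supp}(M))=V(M)$, so $f$ is continuous, and since $f$ is a bijection $f(V(M))=\operatorname{supp}(M)$; combined with Realization (\ref{SS:supportdata}.\ref{E:supporteight}), which says every closed subset of $X$ has the form $V(M)$, this makes $f$ a closed map, yielding a homeomorphism. The most delicate step is surjectivity: one must convert the abstract categorical primality of $\bP$ into the geometric existence and uniqueness of a point $x\in X$, and it is precisely the Noetherian hypothesis together with the unique-generic-point axiom for Zariski spaces that supply the required structure.
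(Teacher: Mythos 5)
Your proposal is correct and follows essentially the same route as the paper, which simply invokes the argument of \cite[Theorem 3.5.1]{BKN}: the candidate map $f(x)=\{M\in\bK^{c}\mid x\notin V(M)\}$, primality via \cref{P:TensorProductProperty} (whose hypotheses hold here since \cref{T:Hopkins} applies under Assumption~\ref{A:projectivity}), surjectivity via the classification in \cref{I:bijectiongeneral} together with Noetherianity and unique generic points, and the topological identification $f^{-1}(\operatorname{supp}(M))=V(M)$ combined with the Realization Property. No gaps worth flagging.
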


\section{Quantum Groups and their Module Categories} 

\subsection{Notation}  We will follow the conventions as described in 
\cite[Section 2]{BNPP}.

\begin{enumerate}

\item $G$: a simple, simply connected algebraic group over $\C $. The assumption of $G$ being simple is for convenience and the results easily generalize to $G$ reductive.

\item $\fg=\operatorname{Lie }G$.

\item $\UU(\fg)$: the universal enveloping algebra of $\fg$. 

\item $\NN$: the nullcone (i.e., the set of nilpotent elements of $\fg$).

\item $\C [\NN]$: the coordinate algebra of $\NN$.

\item $T$: a maximal split torus in $G$. 

\item $\Phi$: the corresponding (irreducible) root system associated to $(G,T)$. When a root system has roots of only one length, all roots shall be considered both short and long.

\item $\Phi^{\pm}$: the positive (respectively, negative) roots.  

\item $B$: a Borel subgroup containing $T$ corresponding to the negative roots. 

\item $\Pi = \{\alpha_1,\alpha_2,\dots,\alpha_{|\Pi|}\}$: an ordering of the simple roots.

\item $\alpha_{0}$: the highest short root.

\item  $\tilde{\alpha}$: the highest root.

\item $\mathbb E$: the Euclidean space $\R\Phi$, with inner product $\langle-,-\rangle$.

\item $\alpha^{\vee} = 2\alpha/\langle\alpha,\alpha\rangle$ for $0\ne\alpha\in{\mathbb E}$.

\item  $X(T)=\mathbb Z \omega_1\oplus\cdots\oplus{\mathbb Z}\omega_{|\Pi|}$: the weight lattice, where the 
fundamental dominant weights $\omega_i\in{\mathbb E}$ are defined by $\langle\omega_i,\alpha_j^\vee\rangle=\delta_{ij}$, $1\leq i,j\leq |\Pi|$. 

\item $X_{+}={\mathbb N}\omega_1+\cdots+{\mathbb N}\omega_{|\Pi|}$: the dominant weights.

\item $\ell$: a fixed odd positive integer. If $\Phi$ has type $G_2$, then we assume that $3 \nmid \ell$.

\item $X_{1}=X(T)/\ell X(T)$.

\item  $W$: the Weyl group of $\Phi$. 

\item $W_{\ell}=W\ltimes {\ell }{\mathbb Z}\Phi$: the affine Weyl group of $\Phi$. 

\item $\rho$: the Weyl weight defined by $\rho=\frac{1}{2}\sum_{\alpha\in\Phi^+}\alpha$.

\item  $h$: the Coxeter number of $\Phi$, given by $h=\langle\rho,\alpha_0^{\vee} \rangle+1$. 

\setcounter{mytempcount}{\value{enumi}}

\end{enumerate} 
\medskip 

\noindent
For quantum groups we will use the following notation.
\medskip 

\begin{enumerate} 

\setcounter{enumi}{\value{mytempcount}}

\item $\zeta$: a primitive $\ell$th root of unity in $\C$. 

\item $\AA = \Z[q,q^{-1}]$, where $q$ is an indeterminate.

\item  $\BU_{q}(\fg )$:  the quantized enveloping algebra over $\mathbb{Q}(q)$.


\item  $\BU^{\AA}_{q}(\fg)$: Lusztig's $\AA$-form in $\BU_{q}(\fg )$.

\item $U_{\zeta}(\fg)$: the (big) quantum group obtained from the specialization at $\zeta$ of Lusztig's $\AA$-form as described below. 

\item $u_{\zeta}(\fg)$: the small quantum group. 
\item $U^{0}$: the subalgebra of $U_{\zeta}(\fg )$ generated by the binomials in the $K_{\alpha}$'s, $\alpha\in\Pi$.

\item $T(\lambda)$: the (indecomposable) tilting $U_{\zeta}(\fg)$-module of highest weight $\lambda\in X_{+}$. 

\end{enumerate}

In more detail, let $\BU_{q}(\fg )$ be the quantized enveloping algebra defined by generators and relations over $\mathbb{Q}(q)$ (where $q$ is an indeterminate) as in, for example, \cite[Section 4.3]{Jan}.  Let $\BU_{q}^{\AA}(\fg )$ denote the $\AA$-form of $\BU_{q}(\fg )$ defined by Lusztig.  One can then construct the restricted specialization ${\mathbb C} \otimes_{\AA} \BU_{q}^{\AA}(\fg )$ by specializing $q$ to $\zeta\in \mathbb{C}$.  Within this algebra $1 \otimes K_{\alpha}^{\ell}$ 
is central for all $\alpha \in \Pi$.  By definition a module for the restricted specialization is of Type 1 if it is annihilated by $1 \otimes K_{\alpha}^{\ell}-1\otimes 1$ for all $\alpha \in \Pi$.  The discussion in \cite[Section 1.6]{APW} implies that for the modules and questions under consideration here we may assume all modules for the restricted specialization are of Type 1.  That is, from this point on we study modules for $U_\zeta(\fg)$ where
\begin{equation*}\label{firstquantumgroup}
U_\zeta(\fg):=\C \otimes_{\AA}\BU^{\AA}_q(\fg)/(  1\otimes 
K_\alpha^\ell-1\otimes 1 \mid \alpha\in\Pi ).
\end{equation*}
 Here $(  \cdots )$ denotes the two-sided ideal generated by the elements within the parentheses. Following Lusztig, we view $\BU_{q}(\fg )$ as a Hopf algebra using the maps given in \cite{APW,BNPP,Jan}.  These maps induce a Hopf algebra structure on the restricted specialization and on $U_{\zeta}(\fg)$.

The elements $E_\alpha,K_\alpha,F_\alpha$, $\alpha\in\Pi$, in $U_{\zeta}(\fg)$ (i.e., the images of $1 \otimes E_{\alpha}$, etc.\ in $U_{\zeta}(\fg )$) generate a
finite-dimensional Hopf subalgebra, denoted by $u_\zeta(\fg)$, of $U_\zeta(\fg)$.
The Hopf algebra $u_{\zeta}(\fg)$ will be referred to as the {\em small quantum group}. The finite-dimensional algebra $u_{\zeta}(\fg)$ is 
a normal  Hopf subalgebra of $U_{\zeta}(\fg)$ such that 
$U_{\zeta}(\fg)/\!\!/u_{\zeta}(\fg)\cong \UU(\fg)$, 
\cite{lusztig:finite-dim, lusztig:root}.  We write $u_{\zeta}(\ft )$, $u_{\zeta}(\fu )$, and $u_{\zeta}(\fb )$ for the subalgebras of $u_{\zeta}(\fg )$ 
generated by $\left\{K_{\alpha}^{\pm 1} \mid \alpha \in \Pi \right\}$,  $\left\{E_{\alpha} \mid \alpha \in \Pi \right\}$, and  $\left\{K_{\alpha}^{\pm 1}, E_{\alpha} \mid \alpha \in \Pi \right\}$, respectively. Sometimes we write $K_{i}$ for $K_{\alpha_{i}}$.

\subsection{Module Categories}\label{SS:modulecategories}

 Let $\Mod \left(U_{\zeta}(\fg ) \right)$ be the full subcategory of all $U_{\zeta}(\fg )$-modules which admit a weight space decomposition for $U^{0}$ with weights lying in $X(T)$ and which are locally finite as $U_{\zeta}(\fg)$-modules. This is precisely the category of integrable Type 1 modules of \cite{BNPP} and \cite{APW}.  Furthermore, let $\mod \left(U_{\zeta}(\fg ) \right)$ denote the category of all finite-dimensional $U_{\zeta}(\fg )$-modules.   By \cite[Theorem 9.12(i)]{APW} every finite-dimensional $U_{\zeta}(\fg)$-module admits a weight space decomposition for $U^{0}$ with weights lying in $X(T)$;  hence,  $\mod \left(U_{\zeta}(\fg ) \right)$ is a full subcategory of  $\Mod \left(U_{\zeta}(\fg ) \right)$.   

Let $A$ denote $u_{\zeta}(\fh )$ or $u_{\zeta}(\fb )$.  Then $\Mod (A)$ denotes the full subcategory of all $A$-modules which admit a weight space decomposition with respect to $u_{\zeta}(\fh )$ with weights lying in $X_{1}$.  Let $\mod (A)$ denote the full subcategory of $\Mod (A)$ consisting of the finite-dimensional modules. In the case of $u_{\zeta}(\fu )$ we let $\Mod (u_{\zeta}(\fu ))$ denote the category of all $u_{\zeta}(\fu)$-modules and $\mod (u_{\zeta}(\fu ))$ the full subcategory of all finite-dimensional $u_{\zeta}(\fu )$-modules.  Note that if $A$ and $B$ are any of $U_{\zeta}(\fg )$, $u_{\zeta}(\fg)$, $u_{\zeta}(\fb )$, $u_{\zeta}(\fh)$, or $u_{\zeta}(\fu)$ and $B$ is a subalgebra of $A$, then restriction defines functors from $\Mod (A)$ to $\Mod (B)$ and from $\mod (A)$ to $\mod (B)$.  Our general convention is that by $A$-module we mean an object of $\Mod (A)$ and by finite-dimensional $A$-module we mean an object of $\mod (A)$.

The Steinberg module is a self-dual simple module which is projective and injective in both $\Mod (U_{\zeta}(\fg ))$ and $\mod (U_{\zeta}(\fg ))$.  From this it follows that these categories have enough projectives and injectives and they coincide  (see \cite[Section 9]{APW} and \cite{APW2} for details).  Since the projectives and injectives coincide one can form the stable module categories for $\Mod (U_{\zeta}(\fg ))$ and $\mod (U_{\zeta}(\fg ))$. Recall that these have the same objects, but one quotients out the morphisms which factor through a projective module. One thereby obtains the triangulated categories $\bK:=\Stmod (U_{\zeta}(\fg ))$ and $\bK^{c}:=\stmod (U_{\zeta}(\fg ))$, respectively. Since the objects of $\mod (U_{\zeta}(\fg ))$ are finite dimensional it immediately follows that the subcategory $\bK^{c}$ of $\bK$ consists of compact objects.  Using the fact that the objects of $\Mod (U_{\zeta}(\fg ))$ are locally finite one can argue just as in \cite[Proposition 4.2.1]{BKN} to verify that $\bK$ is compactly generated and $\bK^{c}$ is precisely the triangulated subcategory of compact objects. 


\section{Projectivity Results for Quantum Groups} 

\subsection{Projectivity Results I} In the following two sections we will generalize projectivity tests proved for finite-dimensional $U_{\zeta}(\fg)$-modules 
to arbitrary modules in $\Mod(U_{\zeta}(\fg))$.  In particular, all modules under discussion will be assumed to be Type $1$ and integrable. Support varieties were used in the proofs for the analogous results in the finite-dimensional cases (cf.\ \cite[Theorem (1.2)]{FP}, \cite[Theorem 5.2.1]{Dr}). Our more general results will enable us to verify Assumption~\ref{A:projectivity} for $\bK=\Stmod (U_{\zeta}(\fg))$. 

Given a set $J$ of simple roots we adopt the notation of \cite[Section 2.5]{BNPP} and write $\fp_{J}$ for the corresponding parabolic subalgebra of $\fg$, $u_{\zeta}(\fp_{J})$ for the corresponding small quantum group viewed as a subalgebra of $u_{\zeta}(\fg)$, etc.  

\begin{theorem} \label{T:proj1} Let $Q$ be in $\Mod(U_{\zeta}(\fg))$ and $J\subseteq \Pi$. Then $Q$ is projective as a $u_{\zeta}(\fg)$-module 
if and only if $Q$ is projective as a $u_{\zeta}(\fp_{J})$-module. 
\end{theorem}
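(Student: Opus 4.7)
The plan is to handle the two directions separately and, for the substantive content, to reduce to the extreme case $J=\emptyset$, where $u_{\zeta}(\fp_{J})=u_{\zeta}(\fb)$.

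The forward direction is purely categorical. Since $u_{\zeta}(\fp_{J})$ is a Hopf subalgebra of the finite-dimensional Hopf algebra $u_{\zeta}(\fg)$, the Nichols--Zoeller theorem guarantees that $u_{\zeta}(\fg)$ is free as a module over $u_{\zeta}(\fp_{J})$. Hence restriction from $u_{\zeta}(\fg)$-modules to $u_{\zeta}(\fp_{J})$-modules sends projectives to projectives, with no hypothesis needed on the dimension of $Q$.

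For the reverse direction, the same Nichols--Zoeller freeness applied to $u_{\zeta}(\fb)\subseteq u_{\zeta}(\fp_{J})$ shows that $u_{\zeta}(\fp_{J})$-projectivity of $Q$ forces $u_{\zeta}(\fb)$-projectivity. It therefore suffices to prove: if $Q\in\Mod(U_{\zeta}(\fg))$ is $u_{\zeta}(\fb)$-projective, then $Q$ is $u_{\zeta}(\fg)$-projective. Since $u_{\zeta}(\fg)$ is a finite-dimensional Frobenius algebra, its module category is quasi-Frobenius, so projectivity coincides with injectivity and can be detected by the vanishing of $\Ext^{n}_{u_{\zeta}(\fg)}(L,Q)$ for every simple $u_{\zeta}(\fg)$-module $L$ and $n>0$. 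For finite-dimensional $L$ this becomes $H^{n}(u_{\zeta}(\fg),L^{*}\otimes Q)=0$. Writing $M=L^{*}\otimes Q$, the standard antipode-twist argument (over a Hopf algebra, $P\otimes V$ is projective whenever $P$ is) shows that $M$ remains $u_{\zeta}(\fb)$-projective, so $H^{n}(u_{\zeta}(\fb),M)=0$ for all $n>0$.

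The main obstacle is to promote this $u_{\zeta}(\fb)$-cohomological vanishing to $u_{\zeta}(\fg)$-cohomological vanishing without any finite-dimensionality hypothesis on $Q$. I would exploit the triangular decomposition of $u_{\zeta}(\fg)$ relative to $u_{\zeta}(\fb)$, combined with either a Lyndon--Hochschild--Serre-style spectral sequence or a quantum BGG--Koszul resolution of the trivial module, to obtain a spectral sequence whose $E_{2}$-page is built from $u_{\zeta}(\fb)$-cohomology of twists of $M$ and which abuts to $H^{*}(u_{\zeta}(\fg),M)$; vanishing of the $E_{2}$-page then forces vanishing of the abutment. The hard point is that the support-variety techniques of \cite{FP} and \cite{Dr} used for the finite-dimensional analogue are unavailable for arbitrary objects of $\Mod(U_{\zeta}(\fg))$, so the comparison has to be carried out intrinsically at the level of cohomology, using the local finiteness and integrability of $Q$ to reduce the computations at each page of the spectral sequence to finite-dimensional ones.
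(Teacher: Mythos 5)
Your forward direction and the reduction of the reverse direction to the Borel case are fine (freeness of $u_{\zeta}(\fg)$ over its Hopf subalgebras handles both), and your reformulation of projectivity via $\Ext^{n}_{u_{\zeta}(\fg)}(L,Q)=0$ for simples $L$ is legitimate. The gap is precisely at the step you flag as "the main obstacle": neither of the two tools you propose actually delivers the implication. There is no Lyndon--Hochschild--Serre spectral sequence for $u_{\zeta}(\fb)\subseteq u_{\zeta}(\fg)$, because $u_{\zeta}(\fb)$ is not a normal subalgebra of $u_{\zeta}(\fg)$ (LHS applies to $u_{\zeta}(\fg)\lhd U_{\zeta}(\fg)$ and to $u_{\zeta}(\fu)\lhd u_{\zeta}(\fb)$, which is how the paper uses it elsewhere, but not here). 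A finite BGG/Koszul resolution of a simple $u_{\zeta}(\fg)$-module by modules induced from $u_{\zeta}(\fb)$ is likewise not available. The correct replacement is the induction spectral sequence of \cite[Theorem 5.1.1]{BNPP},
$E_{2}^{i,j}=R^{i}\Ind_{P_{J}}^{G}\Ext^{j}_{u_{\zeta}(\fp_{J})}(Q,N)^{(-1)}\Rightarrow\Ext^{i+j}_{u_{\zeta}(\fg)}(Q,N)^{(-1)}$, but when $Q$ is $u_{\zeta}(\fp_{J})$-projective this collapses to the bottom row $R^{i}\Ind_{P_{J}}^{G}\Hom_{u_{\zeta}(\fp_{J})}(Q,N)^{(-1)}$, which does \emph{not} vanish for $0<i\leq\dim G/P_{J}$ in general. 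So your claim that "vanishing of the $E_{2}$-page forces vanishing of the abutment" fails: the $j>0$ rows vanish, but the $j=0$ row survives, and one obtains only $\Ext^{i}_{u_{\zeta}(\fg)}(Q,N)=0$ for $i>D:=\dim G/P_{J}$ (by Grothendieck vanishing), not for all $i>0$.

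The missing idea, which is the heart of the paper's proof, is how to upgrade "cohomology vanishes in degrees above $D$" to "projective." The paper does this by taking a minimal projective resolution of $Q$ in $\Mod(U_{\zeta}(\fg))$ and dimension shifting: $\Ext^{m}_{u_{\zeta}(\fg)}(\Omega^{D+1}(Q),N)\cong\Ext^{m+D}_{u_{\zeta}(\fg)}(Q,N)=0$ for all $m\geq 1$ and all simples $N$, whence $\Omega^{D+1}(Q)$ is projective over $u_{\zeta}(\fg)$ by \cite[Theorem 2]{BIK} (this is where the detection of projectivity of a possibly infinite-dimensional module by vanishing against the compact generators is invoked). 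Since projectives are injective in $\Mod(u_{\zeta}(\fg))$, the syzygy sequences $0\to\Omega^{n+1}(Q)\to P_{n}\to\Omega^{n}(Q)\to 0$ then split one at a time, walking back down from $\Omega^{D+1}(Q)$ to $Q$ itself. Without this (or some substitute for it), your argument proves strictly less than the theorem. Note also that the paper works directly with $\fp_{J}$ rather than reducing to $\fb$; the reduction costs nothing but buys nothing, since the spectral sequence argument is uniform in $J$.
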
 

\begin{proof} Let $Q$ be projective as a $u_{\zeta}(\fg)$-module. Then 
$Q$ is projective as a $u_{\zeta}(\fp_{J})$-module since $u_{\zeta}(\fg)$ is 
free as a $u_{\zeta}(\fp_{J})$-module. 

On the other hand, suppose that $Q$ is projective as a $u_{\zeta}(\fp_{J})$-module, and let 
$N$ be a simple module in $\mod(u_{\zeta}(\fg))$. Since $N$ can be regarded as a module in $\mod(U_{\zeta}(\fg))$, 
there exists a spectral sequence (cf.\ \cite[Theorem 5.1.1]{BNPP}): 
\begin{equation} 
E_{2}^{i,j}=R^{i}\Ind_{P_{J}}^{G}\Ext^{j}_{u_{\zeta}(\fp_{J})}(Q,N)^{(-1)} \Rightarrow\Ext^{i+j}_{u_{\zeta}(\fg)}(Q,N)^{(-1)}. 
\end{equation} 
Here $(-1)$ indicate untwisting by the Frobenius morphism so that the cohomology becomes a rational $G$-module. Since $Q$ is projective as a $u_{\zeta}(\fp_{J})$-module this spectral sequence collapses and yields 
\begin{equation} 
R^{i}\Ind_{P_{J}}^{G}\Hom_{u_{\zeta}(\fp_{J})}(Q,N)^{(-1)} \cong \Ext^{i}_{u_{\zeta}(\fg)}(Q,N)^{(-1)}
\end{equation} 
for $i\geq 0$. By the Grothendieck vanishing theorem, 
$$R^{i}\Ind_{P_{J}}^{G}\Hom_{u_{\zeta}(\fp_{J})}(Q,N)^{(-1)}=0$$ 
for $i> \dim G/P_{J}=D:=|\Phi^{+}|-|\Phi_{J}^{+}|$. Therefore, $\Ext^{i}_{u_{\zeta}(\fg)}(Q,N)=0$ 
for $i>D$. 

Let $\dots \rightarrow P_{n}\rightarrow P_{n-1} \rightarrow \dots \rightarrow P_{0} \rightarrow Q \rightarrow 0$ be a minimal projective resolution 
of $Q$ in $\Mod(U_{\zeta}(\fg))$. At each stage we have a short exact sequence
$$0\rightarrow \Omega^{n+1}(Q)\rightarrow P_{n} \rightarrow \Omega^{n}(Q) \rightarrow 0.$$ 
By dimension shifting we see that for $m\geq 1$, 
$$0=\Ext^{m+D}_{u_{\zeta}(\fg)}(Q,N)\cong\Ext^{m}_{u_{\zeta}(\fg)}(\Omega^{D+1}(Q),N).$$ 
By interpreting this statement in the stable module category (with the fact that the compact objects are the finite-dimensional $u_{\zeta}(\fg)$-modules), one can apply \cite[Theorem 2]{BIK} to 
conclude that $\Omega^{D+1}(Q)$ is projective as $u_{\zeta}(\fg)$-module. Since projective modules are 
injective in $\Mod(u_{\zeta}(\fg))$ the sequence 
$$0\rightarrow \Omega^{D+1}(Q)\rightarrow P_{n} \rightarrow \Omega^{D}(Q) \rightarrow 0$$ 
splits. Since $P_{n}$ is projective as a $u_{\zeta}(\fg )$-module, this shows that $\Omega^{D}(Q)$ is projective as $u_{\zeta}(\fg)$-module. By iterating this process 
we can conclude that $Q$ is projective as $u_{\zeta}(\fg)$-module. 
\end{proof} 

\subsection{}  \label{SS:grb} Henceforth we set $A = u_{\zeta}(\fb)$. For this subsection we let $N = |\Phi^+|$. Order the positive roots, $\Phi^{+}=\left\{\gamma_{1}, \dotsc , \gamma_{N} \right\}$ as in \cite[Section 2.4]{BNPP}. 
Then the algebra $u_{\zeta}(\fu)$ has a monomial basis 
$$
\{F_{\ga_{1}}^{a_{1}}F_{\ga_{2}}^{a_{2}}\cdots
F_{\ga_N}^{a_N} \mid 0\leq a_i \leq \ell-1,\ i=1,2,\dots, N\}.
$$
For a shorthand notation, let 
$$
F_{\overline{a}} :=
F_{\ga_{1}}^{a_{1}}F_{\ga_{2}}^{a_{2}}\cdots F_{\ga_N}^{a_N}.
$$
We place a total (lexicographical) ordering $\prec$ on this monomial 
basis as follows. Set $\overline{a} \prec \overline{b}$ if and only if
there exists $1 \leq i \leq N$ such that $a_i < b_i$ and $a_j =
b_j$ for all $j > i$. With this ordering, one can define a filtration on $A$. Given $\overline{a}$, let 
$A_{\overline{a}}$ be the free $u_{\zeta}(\fh )$-module spanned by the $F_{\overline{b}}$ where  $\overline{b} \preceq \overline{a}$.  
By \cite[Lemma 2.4.1]{BNPP} we have $A_{\overline{a}\vphantom{+\overline{b}}}\cdot A_{\overline{b}}
\subseteq A_{\overline{a}+\overline{b}}$ and so the filtration is multiplicative. 

The associated graded algebra $\gr  A$ is generated by $\{K_{i} \mid i=1,2,\dots, |\Pi|\}$ and $\{X_{\alpha} \mid \alpha\in
\Phi^{+}\}$ subject to the relations:
\begin{gather*} \label{E:relations}
K_{i}K_{j}=K_{j}K_{i}, \qquad K_{i}X_{\alpha}=\zeta^{\langle \alpha_{i},\alpha\rangle} X_{\alpha}K_{i}, \\
X_{\alpha} X_{\beta}=\zeta^{\langle \alpha,\beta
\rangle}X_{\beta}X_{\alpha}\ \ \text{if $\alpha\prec \beta$}.
\end{gather*}
Here we write $\alpha\prec \beta$ to denote that $\alpha = \gamma_{i}$ and $\beta=\gamma_{j}$ with $i<j$.
The generators are also subject to the additional conditions:
\begin{gather*}\label{E:nilpotent}
X_{\alpha}^{\ell}=0\ \ \text{for $\alpha\in
\Phi^{+}$}, \\
K_{i}^{\ell}=1\ \ \text{for $i=1,2,\dots,|\Pi|$}.
\end{gather*}

The algebra $\gr  A = \gr u_{\zeta}(\fb)$ is a quantized version of a truncated symmetric algebra which we will denote by $\bar A$.  The same total ordering defines a filtration on the subalgebra $u_{\zeta}(\fu ) \subseteq u_{\zeta}(\fb )$ which, in turn, defines a subalgebra $\gr u_{\zeta}(\fu ) \subseteq \bar A$.  In terms of the above presentation for $\bar A$ this is precisely the subalgebra generated by $\left\{X_{\alpha} \mid \alpha \in \Phi^{+} \right\}$.  See \cite{GK}, for example, for details.  In particular, this subalgebra is a quantum complete intersection in the sense of \cite{BEH}.  We write $A' = u_{\zeta}(\fu)$ and $\bar{A}'=\gr u_{\zeta}(\fu )$ for these algebras.

\subsection{Graded Module Categories}\label{SS:gradedmodulecategories} We observe that the definition of the graded module $\gr M$ (given in Section~\ref{S:gradings}) depends on the choice of generators for $M$. For our purposes we need to choose the generators carefully to insure that projective $A$-modules go to projective $\bar{A}$-modules. This can be done by choosing a basis of elements in the module $M$ modulo its radical, and checking that projective indecomposable $A$-modules go to projective indecomposable $\gr A$-modules under this choice of generators. In particular, by always choosing generators so that projectives go to projectives passing to the associated graded provides a well defined operator on objects between stable module categories. 

Let $\Mod (\bar A )$ be the full subcategory of all $\bar A$-modules which have a weight space decomposition with respect to the commutative subalgebra $\gr u_{\zeta}(\mathfrak h)$ generated by the elements $K_{1}, \dotsc , K_{|\Pi|}$ where all weights lie in $X_{1}$. Moreover, let $\mod (\bar A)$ denote the full subcategory of $\Mod (\bar A )$ consisting of finite-dimensional modules.   Let $\Mod (\gr u_{\zeta}(\fu) )$ denote the category of all $\gr u_{\zeta}(\fu )$-modules and $\mod (\gr u_{\zeta}(\fu ))$ denote the full subcategory of all finite-dimensional $\gr u_{\zeta}(\fu )$-modules.  Our convention is that all modules are assumed to lie in these categories.  Note that if $D$ is $u_{\zeta}(\fb )$ or $u_{\zeta}(\fu)$, then for any module $M$ in $\Mod (D)$ (resp.\ $\mod (D)$) the associated graded module $\gr M$ constructed in \cref{S:graded}  lies in $\Mod (\gr D)$ (resp.\ $\mod (\gr D)$).  Finally, note that in both cases $\gr D$ is self-injective.  Consequently the projective and injective modules coincide and, hence, their stable module categories are triangulated categories.

\subsection{Projectivity Results II} In this section we first analyze the relationships between projectivity for modules of $u_{\zeta}(\fb)$ and $u_{\zeta}(\fu)$ and their  
associated graded algebras.   Given $\lambda \in X_{1}$, let $\lambda$ also denote the one-dimensional simple $u_{\zeta}(\ft )$-module (resp.\  $u_{\zeta}(\fb )$-module) of that weight.  The collection of all such is a complete set of simple $u_{\zeta}(\ft )$-modules (resp.\  $u_{\zeta}(\fb )$-modules).

\begin{prop} \label{P:btou} Let $Q$ be a $u_{\zeta}(\fb)$-module. 
\begin{itemize} 
\item[(a)]  $Q$ is projective as a $u_{\zeta}(\fb)$-module if and only if 
$Q$ is projective as a $u_{\zeta}(\fu)$-module. 
\item[(b)]  $\gr  Q$ is projective as a $\gr  u_{\zeta}(\fb)$-module if and only if 
$\gr  Q$ is projective as a $\gr  u_{\zeta}(\fu)$-module. 
\item[(c)] $Q$ is projective as a $u_{\zeta}(\fu)$-module if and only if $\gr  Q$ is projective as a $\gr  u_{\zeta}(\fu)$-module. 
\item[(d)] $Q$ is projective as a $u_{\zeta}(\fb)$-module if and only if $\gr  Q$ is projective as a $\gr  u_{\zeta}(\fb)$-module. 
\end{itemize} 
\end{prop}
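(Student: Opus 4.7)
My plan is to exploit the dependency structure: part (d) will follow immediately from combining (a), (b), and (c), so the real work is to establish these three equivalences. Parts (a) and (b) are essentially parallel arguments, reflecting the smash-product structure of $u_\zeta(\fb)$ and its associated graded, while (c) is the genuine technical core.

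For (a), I will use that $u_\zeta(\fh) \cong \C[X_1]$ is the (semisimple) group algebra of the finite abelian group $X_1 = X(T)/\ell X(T)$, and that $u_\zeta(\fb)$ factors as a smash product $u_\zeta(\fu) \# u_\zeta(\fh)$, so in particular is free as a one-sided $u_\zeta(\fu)$-module. The forward direction is then clear: a projective summand of a free $u_\zeta(\fb)$-module restricts to a projective $u_\zeta(\fu)$-module. For the reverse I plan to invoke the Lyndon--Hochschild--Serre spectral sequence \[ E_2^{p,q} = \Ext^p_{u_\zeta(\fh)}(\C,\, \Ext^q_{u_\zeta(\fu)}(Q, N)) \Longrightarrow \Ext^{p+q}_{u_\zeta(\fb)}(Q, N), \] which collapses because projectivity of $Q$ over $u_\zeta(\fu)$ kills the rows $q \geq 1$, while semisimplicity of $u_\zeta(\fh)$ kills the columns $p \geq 1$; this yields $\Ext^{\geq 1}_{u_\zeta(\fb)}(Q, -) = 0$ on $\Mod(u_\zeta(\fb))$ and hence projectivity of $Q$. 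Part (b) is identical in spirit: the presentation of $\gr A$ in \cref{SS:grb} shows that the $K_i$'s remain commuting and act diagonally on the $X_\alpha$'s, so $\gr u_\zeta(\fb) \cong \gr u_\zeta(\fu) \# u_\zeta(\fh)$, and the same spectral-sequence argument applies verbatim.

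For (c) I will exploit that both $u_\zeta(\fu)$ and $\gr u_\zeta(\fu)$ are finite-dimensional local Frobenius algebras: the quantum complete intersection presentation of $\gr u_\zeta(\fu)$ has an evident unique simple module (the trivial one), and $u_\zeta(\fu)$ inherits this through its radical filtration. Over a local Frobenius algebra projective and free coincide. The forward direction is then immediate from the generator-choice convention of \cref{SS:gradedmodulecategories}: if $Q \cong u_\zeta(\fu)^{(I)}$ is free, then by construction $\gr Q \cong \gr u_\zeta(\fu)^{(I)}$ is free. For the reverse I will fix a homogeneous $\gr u_\zeta(\fu)$-basis $\{\bar q_i\}_{i \in I}$ of $\gr Q$, lift each $\bar q_i$ to an element $q_i \in Q$ of the appropriate filtration degree, and form the map $\varphi : u_\zeta(\fu)^{(I)} \to Q$ sending the $i$-th free generator to $q_i$. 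Nakayama (applied weight space by weight space, exploiting the finiteness of $X_1$ and of the radical filtration) shows $\varphi$ is surjective, while $\gr \varphi$ is an isomorphism by construction; a standard filtered-to-graded argument (any nonzero element of $\ker \varphi$ of minimal filtration degree would descend to a nonzero element of $\ker \gr \varphi$) then forces $\varphi$ to be an isomorphism.

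Chaining the three parts yields (d): $Q$ is projective over $u_\zeta(\fb)$ iff projective over $u_\zeta(\fu)$ (by (a)), iff $\gr Q$ is projective over $\gr u_\zeta(\fu)$ (by (c)), iff $\gr Q$ is projective over $\gr u_\zeta(\fb)$ (by (b)). The main obstacle I anticipate is the reverse direction of (c), since the standard filtered-to-graded lifting arguments are customarily stated for finitely generated modules whereas objects of $\Mod(u_\zeta(\fb))$ need not be finitely generated. The saving grace is that the filtration on $u_\zeta(\fu)$ has only finitely many nontrivial levels (as $u_\zeta(\fu)$ is itself finite-dimensional), so the induced filtration on any $Q$ has uniformly bounded length; this keeps the Nakayama and minimal-degree arguments effective when applied componentwise across the weight-space decomposition, which will be essential for pushing the proof through in the possibly infinite-dimensional setting.
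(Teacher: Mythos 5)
Your proof is correct, and on most of the statement it coincides with the paper's: the paper likewise deduces (a) and (b) from the collapse of the Lyndon--Hochschild--Serre spectral sequence for $u_{\zeta}(\fu)\lhd u_{\zeta}(\fb)$ (using semisimplicity of everything in sight over $u_{\zeta}(\ft)$), gets the forward direction of (c) from the fact that projective $=$ free over the local algebra $u_{\zeta}(\fu)$ together with the generator convention of \cref{SS:gradedmodulecategories}, and obtains (d) by chaining (a)--(c). Where you genuinely diverge is the reverse direction of (c). The paper invokes the May-type spectral sequence of Proposition~\ref{P:Mayspectral},
$E_{1}^{i,j}=\HH^{i+j}(\gr u_{\zeta}(\fu),\gr Q)_{(i)}\Rightarrow \HH^{i+j}(u_{\zeta}(\fu),Q)$:
projectivity of $\gr Q$ kills the $E_{1}$-page in positive total degree, hence $\HH^{\geq 1}(u_{\zeta}(\fu),Q)=0$ and $Q$ is projective. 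You instead lift a homogeneous $\gr u_{\zeta}(\fu)$-basis of $\gr Q$ and run the classical ``filtered module with free associated graded is free'' argument, which is legitimate here precisely because the filtration has finitely many steps, so exhaustiveness, the downward induction for surjectivity, and the minimal-degree argument for injectivity all go through without any finite-generation hypothesis. Your route is more elementary and yields slightly more (an explicit free basis of $Q$, not just cohomological vanishing), at the cost of redoing by hand what the spectral sequence packages; the paper's route is shorter given that Proposition~\ref{P:Mayspectral} is needed elsewhere anyway. One small caution: in your surjectivity step, ``Nakayama weight space by weight space'' is not quite the right invocation for a possibly infinitely generated module --- what you actually need, and have, is either Nakayama with a nilpotent radical or simply that $\gr\varphi$ surjective plus a bounded exhaustive filtration forces $\varphi$ surjective by induction on filtration degree.
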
 

\begin{proof} (a) The ``only if'' direction of the statement follows because $u_{\zeta}(\fb)$ is a free $u_{\zeta}(\fu)$-module. 
Assume that $Q$ is projective over $u_{\zeta}(\fu)$. The Lyndon-Hochschild-Serre spectral sequence for 
$u_{\zeta}(\fu)\lhd u_{\zeta}(\fb)$ collapses because the quotient is isomorphic to $u_{\zeta}(\ft)$ and by assumption the modules under consideration are semisimple as $u_{\zeta}(\ft )$-modules. This yields the 
isomorphism 
$$\Ext^{i}_{u_{\zeta}(\fb)}(Q,N)\cong\Hom_{u_{\zeta}(\ft)}({\mathbb C},\Ext^{i}_{u_{\zeta}(\fu)}(Q,N))$$ 
for $N$ a $u_{\zeta}(\fb)$-module and $i\geq 0$. Since $Q$ is projective over $u_{\zeta}(\fu)$ we can conclude from this isomorphism that 
$\Ext^{i}_{u_{\zeta}(\fb)}(Q,N)=0$ for $i>0$. Therefore, $Q$ is projective as a $u_{\zeta}(\fb)$-module. 

(b) This follows from the same line of reasoning as in (a). 

(c)  If $Q$ is projective as a $u_{\zeta}(\fu)$-module then $Q$ is a direct sum of copies of $u_{\zeta}(\fu)$, and thus 
$\gr Q$ is projective as a $\gr  u_{\zeta}(\fu)$-module. 

For the converse, we have an increasing multiplicative filtration on $u_{\zeta}(\fu)$ which leads to a spectral sequence from Proposition~\ref{P:Mayspectral}: 
$$E_{1}^{i,j}=\HH^{i+j}(\gr  u_{\zeta}(\fu), {}\gr Q)_{(i)}\Rightarrow \HH^{i+j}(u_{\zeta}(\fu),Q).$$ 
Since $\gr  Q$ is projective as a $\gr  u_{\zeta}(\fu)$-module we have $E_{1}^{i,j}=0$ for $i+j\geq 1$. This shows that 
$\HH^{i+j}(u_{\zeta}(\fu),Q)=0$ for $i+j\geq 1$. Consequently, $Q$ is projective as a $u_{\zeta}(\fu)$-module. 

(d) This is a consequence of parts (a), (b), and (c). 
\end{proof} 

\section{Localization and Supports} 

\emph{For the remainder of the paper we will assume that $\zeta$ is a primitive $\ell$th root of unity where 
$\ell>h$}. Throughout this section set $A=u_{\zeta}({\mathfrak b})$, $\bar{A}=\gr u_{\zeta}({\mathfrak b})$, $A^{\prime}= u_{\zeta}(\fu)$, and $\bar{A}^{\prime}=\gr u_{\zeta}(\fu)$. 

\subsection{}\label{SS:BIKforAbar} 

 Let $R=\HH^{\bullet}(A,\C)=\HH^{\bullet}(u_{\zeta}(\fb),\C )$.  Since $\ell>h$ we have $R\cong S^{\bullet}(\fu^{*})$ as an algebra by \cite[Theorem 2.5]{GK}.  Now consider the spectral sequence in Proposition~\ref{P:Mayspectral} with $M=\C$.  As in the proof of \cite[Theorem 2.5]{GK}, the spectral sequence collapses and so we have the isomorphisms: 
$$S^{\bullet}(\fu^{*}) \cong R=\HH^{2\bullet}(u_{\zeta}(\fb),\C )\cong \HH^{2\bullet}(\gr u_{\zeta}(\fb),\C ) .$$ 
These are isomorphisms of graded rings where the generators of $ S^{\bullet}(\fu^{*})$ are in degree two. See \cref{SS:relatingvarieties} for more details.

Let $R^{\prime}=\HH^{\bullet}(u_{\zeta}(\fu),\C )$. Then for any 
$u_{\zeta}(\fu)$-module $M$, $R^{\prime}$ acts on $\text{Ext}^{\bullet}_{u_{\zeta}(\fu)}(\C,M)$ via the Yoneda product. Since $R$ is a subring of $R^{\prime}$, it 
follows that $R$ acts on $\text{Ext}^{\bullet}_{u_{\zeta}(\fu)}(\C,M)$. For any $u_{\zeta}({\mathfrak b})$-module $M$, one can apply the Lyndon-Hochschild-Serre spectral 
sequence to prove that 
$$\Ext^{\bullet}_{u_{\zeta}(\fb)}(\oplus_{\lambda\in X_{1}}\lambda,M)\cong\Ext ^{\bullet}_{u_{\zeta}(\fu)}(\C,M).$$
In this way, $R$ naturally acts on $\Ext^{\bullet}_{u_{\zeta}(\fb)}(\oplus_{\lambda\in X_{1}}\lambda,M)$. A similar statement on actions can be made when $A$ is replaced by $\bar{A}$ and 
$A^{\prime}$ is replaced by $\bar{A}^{\prime}$. 

Set $X=\Proj(R)=\Proj(\Spec(R))$. Given an ideal $I$ of $R$ we write $\V( I)=\{P'\in X\mid\ P'\supseteq I\}$ for the closed subset of $X$ determined by $I$.  
The \emph{specialization closure} of a subset $U \subseteq X$ is the subset $\cl (U)=\cup_{P \in U} \V (P)$.  A subset $U \subseteq X$ is called \emph{specialization closed} if $\cl (U)=U$; that is, if $U$ is the union of closed sets. 

For $M\in\Stmod (A)$, let 
\begin{align*}
\ZZ_{A}(M)&=\{P \in X\mid \Ext^{\bullet}_{u_{\zeta}(\fb)}(\oplus_{\lambda\in X_{1}}\lambda,M)_{P}\neq 0\} \\
                               &=\{P \in X \mid\Ext^{\bullet}_{u_{\zeta}(\fu)}(\C ,M)_{P}\neq 0\}. 
\end{align*}

For $M\in\Stmod (\bar A)$ one can similarly define $\ZZ_{\bar A}(M)$. 
Now applying Proposition~\ref{P:Mayspectral}, and the fact that $R$ acts on the spectral sequence, one sees that for 
$M\in\Stmod(u_{\zeta}(\fb))$, 
\begin{equation} \label{E:AvarietyinbarAvariety}
\ZZ_{A}(M)\subseteq \ZZ_{\bar A}(\gr M). 
\end{equation}  
One can verify directly that $\ZZ_{A}(-)$ and $\ZZ_{\bar A}(-)$ satisfy (\ref{SS:supportdata}.\ref{E:supportone})--(\ref{SS:supportdata}.\ref{E:supportfour}); we 
use these properties without comment in what follows.

\subsection{\texorpdfstring{A Tensor Product for $\bar{A}$-Modules}{A Tensor Product for (gr A)-Modules}} \label{SS:TensorAbarmodules}  
Let $\Delta: u_{\zeta}(\fb ) \to u_{\zeta}(\fb )\otimes u_{\zeta}(\fb )$ be the coproduct for $u_{\zeta}(\fb )$ and let $\varepsilon: u_{\zeta}(\fb ) \to \C$ be the counit. From the defining properties of a Hopf algebra the composition
\begin{equation}\label{E:coalgebramaps}
u_{\zeta}(\fb ) \xrightarrow{\Delta} u_{\zeta}(\fb) \otimes u_{\zeta}(\fb ) \xrightarrow{1 \otimes \varepsilon} u_{\zeta}(\fb ) \otimes \C \xrightarrow{m} u_{\zeta}(\fb )
\end{equation} 
is the identity, where here $m$ denotes the multiplication map. Consequently, $\Delta$ is injective and we can identify $u_{\zeta}(\fb )$ as a subalgebra of $ u_{\zeta}(\fb )\otimes u_{\zeta}(\fb )$ via $\Delta$.  From \cref{SS:grb} there is a filtration of $u_{\zeta}(\fb)$ which induces a filtration on $u_{\zeta}(\fb ) \otimes u_{\zeta}(\fb )$ and on the image of $\Delta$ (see \cref{SS:associatedgraded}). This in turn induces (via $\Delta$) a different filtration on $u_{\zeta}(\fb)$, which (by construction) makes $\Delta$ a map of filtered algebras.  
We write $\Delta$ for the subalgebra $\gr \Delta (u_{\zeta}(\fb )) \subseteq \gr(u_{\zeta}(\fb )\otimes u_{\zeta}(\fb ))$.

Given $\bar{A}$-modules $Q_{1}$ and $Q_{2}$ we write $Q_{1}\otimes Q_{2}$ for the $\bar{A}\otimes\bar{A}$-module given by taking their outer tensor product. Then $Q_{1}\otimes Q_{2}$ can be viewed as a $\Delta$-module by restriction to this subalgebra. For example, if $\lambda$ and $\mu$ are simple $\bar{A}$-modules, then $\lambda \otimes \mu$ is a one-dimensional simple $\Delta$-module.  In particular, in this way $\mathbb{C} = \mathbb{C} \otimes \mathbb{C}$ is a module for $\Delta$. 
Furthermore, several times in the sequel the following observation is needed.  If $M$, $N$ are $A$-modules and $\gr \left(M \otimes N \right)$ is projective as an $\bar{A}$-module, then $M \otimes N$ is a projective $A$-module by \cref{P:btou}.  In turn, $\gr M \otimes \gr N$ is projective as a $\Delta$-module by the discussion at the beginning of \cref{SS:gradedmodulecategories}.  
 
Finally, note that the antipode on $A$ preserves the filtration given in \cref{SS:grb} and induces an anti-automorphism on $\bar{A}$.  In particular, given a finite-dimensional $\bar{A}$-module $M$ we can consider the corresponding dual module $M^{*}$.

\subsection{}\label{SS:BIKsupports} Given a finite-dimensional $\bar{A}$-module $Q_{1}$ and an arbitrary $\bar{A}$-module $Q_{2}$, $\Ext_{\bar{A}}^{\bullet}(Q_{1},Q_{2})$ can be made into an $R$-module in the following way. From 
Theorem~\ref{thm:injectivemap}, we have the isomorphism, 
$$\text{Ext}^{\bullet}_{\bar{A}}(Q_{1},Q_{2})\cong\Ext ^{\bullet}_{\Delta}({\mathbb C},Q_{1}^{*}\otimes Q_{2}).$$ 
One can then use the natural action of $R\cong \HH^{2\bullet}(\Delta,\C)$ on $\text{Ext}^{\bullet}_{\Delta}({\mathbb C},Q_{1}^{*}\otimes Q_{2})$ to put an action of $R$ 
on $\text{Ext}^{\bullet}_{\bar{A}}(Q_{1},Q_{2})$. For $Q_{1}=\oplus_{\lambda\in X_{1}}\lambda$, this coincides with the aforementioned construction of the action of $R$ on 
$\Ext_{\bar{A}}^{\bullet}(\oplus_{\lambda\in X_{1}}\lambda,Q_{2})$. In the case when $Q_{1}=Q_{2}$, we have the map of rings: 
\begin{equation}\label{E:ringmap}
R\rightarrow\Ext ^{\bullet}_{\Delta}({\mathbb C},Q_{1}^{*}\otimes Q_{1})\cong\Ext ^{\bullet}_{\bar{A}}(Q_{1},Q_{1}).
\end{equation} 

The results in \cite[Section 5]{BIK} use the assumption that one has a ring map (\ref{E:ringmap}) for an arbitrary module $Q_{1}$. However, for our setting we only have 
an action of $R$ on $\text{Ext}^{\bullet}_{\bar{A}}(Q_{1},Q_{2})$ when $Q_{1}$ is finite-dimensional  (i.e., compact). Using this action, the main results (i.e., \cite[Theorems 5.2, 5.13]{BIK}) 
will hold for $\bar{A}$-modules. The verification of \cite[Theorem 5.13]{BIK} for $\bar{A}$ heavily uses the aforementioned action, and the fact that 
$Q_{1}$ is compact. Furthermore, the map (\ref{E:ringmap}) is used to construct Koszul objects (cf. \cite[Definition 5.10]{BIK}) for finite-dimensional $\bar{A}$-modules. 

For each $M$ in $\Stmod  \left(A \right)$ (resp.\ $\Stmod \left( \bar{A}\right)$) and $P\in X=\Proj (R)$, let $\nabla_{P}(M)$ be the object\footnote{Denoted by $\Gamma_{P}M$ in \cite{BIK}.} in $\Stmod (A)$ (resp.\ $\Stmod (\bar{A}$)) constructed via (co)localization functors in \cite[Section 5]{BIK}.  The following theorem gives an alternate description of $\nabla_{P}(M)$.

\begin{theorem}\label{T:NablaAsTensor} Let $M \in \Stmod (A)$ and $P \in X$. Then, 
\[
\nabla_{P}(M) \cong M \otimes \nabla_{P}(\C )
\] as $A$-modules.


\end{theorem}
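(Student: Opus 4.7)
The strategy is to exploit that $M \otimes -$ is an exact, coproduct-preserving, monoidal endofunctor of $\Stmod(A)$, together with the fact that the BIK object $\nabla_P(-) = L_{\mathcal{Z}(P)} \circ \gamma_{\mathcal{V}(P)}$ is assembled from localization and colocalization functors whose defining (co)localizing subcategories are generated by Koszul-type objects constructed out of the tensor unit $\mathbb{C}$. Here $\gamma_{\mathcal{V}(P)}$ denotes local cohomology with support in the specialization closure $\mathcal{V}(P)$ of $\{P\}$, and $L_{\mathcal{Z}(P)}$ is localization with respect to the specialization closed complement $\mathcal{Z}(P)$.

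First I would show that $\gamma_{\mathcal{V}(P)}(M) \cong M \otimes \gamma_{\mathcal{V}(P)}(\mathbb{C})$. Starting from the BIK triangle $\gamma_{\mathcal{V}(P)}(\mathbb{C}) \to \mathbb{C} \to L_{\mathcal{V}(P)}(\mathbb{C}) \to$ and tensoring with $M$ yields a triangle
\begin{equation*}
M \otimes \gamma_{\mathcal{V}(P)}(\mathbb{C}) \to M \to M \otimes L_{\mathcal{V}(P)}(\mathbb{C}) \to.
\end{equation*}
The Koszul generators of $\Loc\bigl(\gamma_{\mathcal{V}(P)}(\mathbb{C})\bigr)$ are obtained from $\mathbb{C}$ by iterated cones of multiplication maps coming from $R$ acting on $\mathbb{C}$; tensoring with $M$ preserves cones and coproducts, hence $M \otimes \gamma_{\mathcal{V}(P)}(\mathbb{C})$ still lies in the localizing subcategory attached to $\mathcal{V}(P)$. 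To identify the displayed triangle with the BIK triangle for $\gamma_{\mathcal{V}(P)}(M)$ via \cref{T:localizationtriangles}, it remains to show there are no nonzero maps from this localizing subcategory into $M \otimes L_{\mathcal{V}(P)}(\mathbb{C})$. This is the step where I avoid acting by $R$ on a non-compact module: using the duality adjunction $\Hom_{\bK}(N \otimes A, B) \cong \Hom_{\bK}(A, N^{*} \otimes B)$ available for compact $N$, the vanishing for $M \otimes L_{\mathcal{V}(P)}(\mathbb{C})$ reduces to the corresponding vanishing for $L_{\mathcal{V}(P)}(\mathbb{C})$, which holds by construction. The parallel argument, applied to the localization triangle $\gamma_{\mathcal{Z}(P)}(\mathbb{C}) \to \mathbb{C} \to L_{\mathcal{Z}(P)}(\mathbb{C}) \to$, yields $L_{\mathcal{Z}(P)}(M) \cong M \otimes L_{\mathcal{Z}(P)}(\mathbb{C})$.

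Composing the two isomorphisms and invoking associativity of $\otimes$ gives
\begin{equation*}
\nabla_P(M) = L_{\mathcal{Z}(P)} \gamma_{\mathcal{V}(P)}(M) \cong \gamma_{\mathcal{V}(P)}(M) \otimes L_{\mathcal{Z}(P)}(\mathbb{C}) \cong M \otimes \gamma_{\mathcal{V}(P)}(\mathbb{C}) \otimes L_{\mathcal{Z}(P)}(\mathbb{C}) \cong M \otimes \nabla_P(\mathbb{C}),
\end{equation*}
which is the desired identity.

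The main obstacle is the subtlety noted in \cref{SS:BIKsupports}: the ring map $R \to \Ext^\bullet_A(M,M)$ is not directly available when $M$ is non-compact, so one cannot simply quote the usual tensor-product formula for $\nabla_P$ from \cite{BIK} verbatim. My plan circumvents this by never invoking an $R$-action on $M$ itself: the $R$-action is used only on $\mathbb{C}$ and on compact Koszul objects, and all relevant information about $M$ is transported through the exact monoidal functor $M \otimes -$. The delicate step is therefore to verify that the universal (Hom-vanishing) characterization of $\gamma_{\mathcal{V}(P)}(M)$ and $L_{\mathcal{Z}(P)}(M)$ provided by \cref{T:localizationtriangles} can be tested against compact objects only; this is precisely what the duality adjunction, valid because $\bK$ is rigidly compactly generated, allows us to do.
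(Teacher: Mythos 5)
Your proposal is correct and follows essentially the same route as the paper, which simply invokes the arguments of \cite[Proposition 8.1, Theorem 8.2, Corollary 8.3]{BIK}: tensor the localization triangle for $\C$ with $M$, verify membership in the (co)localizing subcategory and the Hom-vanishing condition via rigidity of compact objects, and conclude by uniqueness of the triangle in \cref{T:localizationtriangles}. The only point you gloss over that the paper flags explicitly is that, since the tensor product is braided rather than symmetric, one must check the relevant localizing subcategories are two-sided ideals (left and right versions of the argument); your reduction-to-compacts step otherwise handles the non-compactness of $M$ exactly as needed.
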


\begin{proof} The arguments which prove the analogous \cite[Corollary 8.3]{BIK} apply here as well with a few minor modifications to the proofs of \cite[Proposition 8.1]{BIK} and \cite[Theorem 8.2]{BIK}.  In particular, as the tensor product may not be symmetric, one must do both left and right versions of the arguments therein to verify that the subcategories in question are two-sided ideals.  
\end{proof}

Given an $M\in \Stmod (A)$, set 
$$W_{A}(M)=\{P\in X\mid\ \nabla_{P}(M)\neq 0\}.\footnote{Denoted by $\supp_{R} M$ in \cite[Section 5]{BIK}}$$ 
When $M\in \stmod(A)$ one has 
\begin{equation} \label{E:WequalsZforfd}
W_{A}(M)=\ZZ_{A}(M)
\end{equation}
 (cf.\ \cite[Theorem 5.5]{BIK}). 
With our aforementioned setting for $\bar{A}$,  one can analogously define the support $W_{\bar{A}}(M)$ for $M\in\Stmod(\bar{A})$. 
For $M\in \stmod({\bar{A}})$, 
\begin{equation} \label{E:WequalsZforfd-barA}
W_{\bar{A}}(M)=\ZZ_{\bar{A}}(M) 
\end{equation}
(cf.\ \cite[Theorem 5.5]{BIK}). Furthermore, for $M\in \text{mod}(\bar A)$ there is an isomorphism, $\text{Ext}^{\bullet}_{\bar{A}}(M,M)\cong 
\text{Ext}^{\bullet}_{\bar{A}}(M^{*},M^{*})$. It follows by using (\ref{E:ringmap}) that 
\begin{equation} \label{E:varietiesofduals}
\ZZ_{\bar{A}}(M)\cong \ZZ_{\bar{A}}(M^{*})
\end{equation}
(cf.\ \cite[Proposition 5.7.3]{Ben}). 

\subsection{}\label{SS:QCI}  As mentioned in \cref{SS:grb}, the algebra $\bar{A}^{\prime}=\gr u_{\zeta}(\fu)$ is a quantum complete intersection and so admits a rank variety. We adapt rank varieties for quantum complete intersections to the algebra $\bar{A}$ following Benson, Erdmann and Holloway \cite[Sections 4--5]{BEH}.  For details on the various constructions the interested reader 
should refer to \emph{loc.\ cit.} (taking note that our algebra $C^{\prime}$ is denoted ``$A$'' therein).   

One first fixes a block, $e\C E_{\zeta}$, of a finite group $E_{\zeta}$ defined in terms of the various powers of $\zeta$ occuring in the commutation relations for the $X_{\alpha}$  in  \cref{E:relations}, where $e$ is a central idempotent.  As the ground field is the complex numbers the algebra $e\C E_{\zeta}$ is a matrix algebra with a unique simple module $S$.  Let 
$$B=e\C E_{\zeta} \otimes \bar{A}.$$ 
Since  $e\C E_{\zeta}$ is a matrix algebra there is a Morita equivalence between $\Mod (\bar{A})$ and $\Mod (B)$ given by $M \mapsto  S \boxtimes M$, where we write $\boxtimes$ for the outer tensor product of modules. Set $C$ to be the subalgebra of $B$ generated by
\begin{equation}\label{E:Cgenerators}
\left\{  ee_{i}\otimes X_{\gamma_i}, e\otimes K_{j} \mid \ i=1,2,\dots, N,\ j=1,2,\dots,n \right\},
\end{equation}
and let $C'$ denote the subalgebra of $C$ generated by 
\begin{equation}\label{E:Cprimegenerators}
\left\{ee_{i}\otimes X_{\gamma_i} \mid \ i=1,2,\dots, N  \right\},
\end{equation}
where the $e_{i}$ are certain distinguished generators of the finite group $E_{\zeta}$ and $N=|\Phi^{+}|$. Now we have a functor 
\begin{equation}\label{E:Fdef}
F:\Mod (\bar{A}) \to \Mod (C)
\end{equation}
which is the composition of the Morita equivalence with the restriction functor.

One has that $C^{\prime}$ is a normal subalgebra of $C$ with $C//C^{\prime}\cong u_{\zeta}(\ft)$. Moreover, $C^{\prime}$ is naturally a subalgebra of $B$ and is isomorphic to a quantum complete intersection.   The crucial point is that while $\bar{A}'$ is a quantum complete intersection with various powers of $\zeta$ occuring in the commutation relations given in \cref{E:relations}, by construction the analogous relations for $C^{\prime}$ have only $\zeta^{1}$ and $\zeta^{-1}$ appearing.  
As shown in \cite{BEH}, a quantum complete intersection of this type admits a rank variety.

\subsection{}\label{SS:relatingvarieties} Before proceeding we relate cohomology and supports for these algebras. For any quantum complete intersection the cohomology ring is explicitly computed in \cite{BO} and is given by generators and relations with the generators in degrees one and two and where the degree one generators are nilpotent.  If $D$ denotes $\bar{A}'$ or $C'$ and we write $\Ext^{2\bullet}_{D}(\C , \C )$ for the subalgebra of the cohomology ring generated by the generators which lie in degree two, then the inclusion  $\Ext^{2\bullet}_{D}(\C , \C ) \hookrightarrow \Ext^{\bullet}_{D}(\C , \C )$ induces a homeomorphism on the spectrum.  We choose to work with $\Ext^{2\bullet}_{D}(\C , \C )$ and note that it is canonically isomorphic to $ S^{\bullet}({\fu}^{*})$.

In the case when $D$ equals $\bar{A}$ or $C$ the cohomology ring is also generated in degree two and is canonically isomorphic to $ S^{\bullet}({\fu}^{*})$. The proof of this fact uses the same line of reasoning as in the case of $\Ext^{\bullet}_{u_{\zeta}({\mathfrak b})}(\C,\C)$ and, in particular, identifies the cohomology ring of $\bar{A}$ (resp.\ $C)$ with the aforementioned subalgebra of the cohomology ring of $\bar{A}'$ (resp.\ $C'$). 

Consequently, if $D$ denotes any one of these four algebras $\bar A, C, \bar A'$ or $C'$, we have a canonical identification between $\Ext^{2\bullet}_{D}(\C, \C )$ and $S^{\bullet}(\fu^{*})$  and so obtain canonical identifications of the following varieties:
\begin{equation}\label{E:QCIspectrums}
\Spec \left(\Ext_{D}^{2\bullet}(\C , \C ) \right) \cong \Spec \left(S^{\bullet}(\fu^{*}) \right).
\end{equation}
Moreover, for $M$ in $\Stmod (D)$ one can define $\ZZ_{D}(M)$ as in Section~\ref{SS:BIKforAbar}. Using these canonical identifications with subsets of the spectrum of $S^{\bullet}(\fu^{*})$ allows one to sensibly compare these sets for various algebras and their modules.  

Now a direct calculation verifies that $F(\C)= S\boxtimes \C$ is isomorphic to the direct sum of $\dim_{\C} (S)$ copies of $\C$ as a $C$-module. Also, note that the compact objects for $\bar{A}$ and $C$ are generated by the 
collection of one-dimensional simple modules $\{\lambda \mid \lambda\in X_{1}\}$. An important property about the relationship between the algebras $B$ and $C$ is the following fact. 
If $M$ is a module for $B$ then $M$ is a direct summand of  $B\otimes_{C}M$ \cite[Section 3 (Res1)]{BEH}. Furthermore, the induction functor $B\otimes_{C} -$ is exact. A consequence of these facts is that the restriction map
\begin{equation} \label{E:relproj}
\Ext^{\bullet}_{B}(Z_{1},Z_{2})\hookrightarrow\Ext^{\bullet}_{C}(Z_{1},Z_{2})
\end{equation} 
is a monomorphism for any $B$-modules $Z_{1}$ and $Z_{2}$. 

Let $Q_{1}$ be a finite-dimensional $\bar{A}$-module. Combining the observations in the previous paragraph we have the following maps:
\begin{equation} \label{E:maps1}
S^{\bullet}(\fu^{*})\cong \Ext^{2\bullet}_{\bar{A}}(\C ,\C )\rightarrow \Ext^{2\bullet}_{\bar{A}}(Q_{1},Q_{1}) \cong \Ext^{2\bullet}_{B}(S \boxtimes Q_{1} ,S\boxtimes Q_{1} ) 
\end{equation} 
The last isomorphism is given by the Morita equivalence. Since under the Morita equivalence every finite-dimensional $B$-module is isomorphic to $S \boxtimes Q_{1}$ for some finite-dimensional $Q_{1}$, the existence of this homomorphism allows one to define Koszul objects for compact $B$-modules as defined in \cite[Definition 5.10]{BIK}.  These objects will be used below by restricting to $C$ and using \cref{E:relproj}. 



Now let $N$ be a finite-dimensional $C$-module and $M$ be an $\bar{A}$-module. Then under the equivalence of categories: 
$$ 
\Ext^{\bullet}_{C}(N,S\boxtimes M)\cong \Ext^{\bullet}_{B}(B\otimes_{C} N,S\boxtimes M)\cong\Ext^{\bullet}_{\bar{A}}(Q,M)\cong \Ext^{\bullet}_{\Delta}(\C,Q^{*}\otimes M)
$$
for some finite-dimensional $\bar{A}$-module $Q$. This allows one to place an action of $R$ on $\Ext^{\bullet}_{C}(N,S\boxtimes M)$ and define $Z_{C}(S\boxtimes M)$. One can then verify the analogues of the results in \cite[Section 5]{BIK} for the algebra $C$. 

In particular, taking $N=E:=\oplus_{\lambda\in X_{1}}\lambda$, one has $\Ext^{\bullet}_{C}(E,S\boxtimes M)\cong\Ext^{\bullet}_{\bar{A}}(Q,M)$. Therefore, 
$$\ZZ_{C}(S\boxtimes M) \subseteq \ZZ_{\bar{A}}(M).$$
Now for any $M\in \Stmod(\bar{A})$ one has the following composition of maps:
\begin{equation} \label{E:maps2}
\Ext^{\bullet}_{\bar{A}}(E ,M)\xrightarrow{\cong}\Ext^{\bullet}_{B}(S\boxtimes E,S\boxtimes M)\hookrightarrow\Ext^{\bullet}_{C}(S\boxtimes E, S\boxtimes M).
\end{equation}  
Note that $\Ext^{\bullet}_{C}(S\boxtimes E, S\boxtimes M)\cong \Ext^{\bullet}_{C}(E, S\boxtimes M)\otimes S^{*}$.
By localizing at a prime $P\in X$, it follows by definition of support that 
$\ZZ_{\bar{A}}(M)\subseteq \ZZ_{C}(S\boxtimes M)$. Therefore, 
\begin{equation} \label{E:ACvarieties}
\ZZ_{\bar{A}}(M)=\ZZ_{C}(S\boxtimes M)
\end{equation} 
for any $M\in \Stmod(\bar{A})$.  

Next we observe that a similar relationship holds between the supports $W_{\bar{A}}(-)$ and $W_{C}(-)$; namely, for any $M\in\Stmod(\bar A)$,
\begin{equation} \label{E:ACWvarieties}
W_{\bar{A}}(M)=W_{C}(S\boxtimes M).
\end{equation}
In order to see this one can use similar reasoning (via the equivalence of categories) to conclude that for any $P\in X$, 
$$\text{Ext}^{\bullet}_{C}(S\boxtimes E \sslash P, S\boxtimes M)\cong\Ext ^{\bullet}_{\bar{A}}(Q,M)$$ 
for some finite-dimensional $\bar A$-module $Q$. It follows that 
$$\text{min}_{R}\Ext ^{\bullet}_{C}(S\boxtimes E \sslash P, S\boxtimes M)=\text{min}_{R}\Ext ^{\bullet}_{\bar{A}}(Q,M).$$
 The definitions of $\sslash P$ and $\text{min}_{R}$ are given in \cite[Definition 5.10]{BIK} and {\it op.\ cit.} p.\ 589, respectively. Note that for $C$, Koszul objects can be defined 
 for objects of the form $S\boxtimes M$ where $M$ is an $\bar{A}$-module, by using the iterative construction in $\text{mod}(B)$ and then restricting to $C$. 
 Since $S\boxtimes E$ is a compact generator for $C$, it follows that $W_{C}(S\boxtimes M)\subseteq W_{\bar A}(M)$ by \cite[Theorem 5.13]{BIK}. 

On the other hand, let $P\in W_{\bar A}(M)$. Then $\text{Ext}^{\bullet}_{\bar{A}}(E, \nabla_{P}(M))\neq 0$. 
Therefore, by \cite[Proposition 5.2]{BIK}, $\text{Ext}^{\bullet}_{\bar{A}}(E \sslash P, M)_{P}\neq 0$, 
and $\text{Ext}^{\bullet}_{C}(S\boxtimes [E \sslash P], S\boxtimes M)_{P}\neq 0$ (via an injection 
similar to (\ref{E:maps2})). Since by \cref{E:ACvarieties} and 
$$\ZZ_{C}(S\boxtimes [E \sslash P])=\ZZ_{\bar{A}}(E \sslash P)=W_{\bar{A}}(E \sslash P )
=\{P^{\prime}\in X\mid P\subseteq P^{\prime}\},$$
it follows that 
$P\in \operatorname{min}_{R}\Ext^{\bullet}_{C}(S\boxtimes [E \sslash P], S\boxtimes M)$. Consequently, 
$P\in W_{C}(S\boxtimes M)$ by \cite[Theorem 5.13]{BIK}.

\subsection{}\label{SS:DadesLemma} Using the notation established in \cref{SS:QCI}, we introduce rank varieties for $\bar{A}=\gr u_{\zeta}(\fb )$ following \cite{BEH}.   Since $C'$ is a quantum complete intersection for which, by design, only $\zeta$ occurs in the commutation relations, one can define a rank variety as follows.  Fix a field extension, $K$, of $\C$ of sufficiently large transcendence degree (larger than $|\Phi^{+}|$ suffices). If $D$ is one of the algebras we consider, write $D_{K}=K \otimes_{\C}D$.  For brevity set $Y_{\gamma_{i}}=1 \otimes ee_{i}\otimes X_{\gamma_i} \in C'_{K}$, where $ee_{i}\otimes X_{\gamma_i}$ is as in \cref{E:Cprimegenerators}.  Let $V_{C'}^{\rk}(\C)$ denote the subspace of $C'_{K}$ spanned by $\left\{Y_{\gamma_{i}} \mid \gamma_{i} \in \Phi^{+} \right\}$.  Given an element $x=\sum_{i} a_{i}Y_{\gamma_{i}}$ of $V_{C'}^{\rk}(\C)$, write $\langle x \rangle$ for the subalgebra of $C'_{K}$ it generates.

Given a $C'$-module $M$ set 
\begin{equation*}
V_{C'}^{\rk}(M) =  \left\{ x \in V_{C'}^{\rank}(\C )  \mid  K \otimes_{\C} M \text{ is not projective as an $\langle x \rangle$-module} \right\} \cup \left\{0 \right\}.
\end{equation*}  
The field extension $K$ is needed to handle the case when $M$ is infinite-dimensional.  The rank variety does not depend on the choice of $K$ and so we often leave it implicit in what follows.  See \cite[Section 5]{BEH} for further details or \cite[Section 4.5]{BKN} for the analogue in the setting of Lie superalgebras.  An important property of the rank variety is that it satisfies Dade's Lemma; that is, $M$ is a projective $C'$-module if and only if $V_{C'}^{\rk}(M)=\{0 \}$ (see \cite[Theorem 5.4]{BEH}). 

A $C$-module $M$ is a $C'$-module by restriction and so one can define the rank variety 
\[
V_{C}^{\rk }(M)=V_{C'}^{\rk}(M).
\]
Using the fact that $C'$ is a normal subalgebra of $C$, it follows from a spectral sequence argument that a $C$-module $M$ is projective if and only if it is projective upon restriction to $C'$. Hence, the rank variety of $C$ also satisfies Dade's Lemma. 

Recall from \cref{SS:relatingvarieties} that $S^{\bullet}(\fu^{*})$ canonically identifies with a subalgebra of $\Ext_{C'}^{\bullet}(\C, \C)$ and so acts on $\Ext_{C'}^{\bullet}(\C , N)$ for any compact $C'$-module $N$. In particular, one can define $Z_{C'}(N)$ for any such $N$. By \cite[Proposition 5.1]{BEH} there is a bijection, 
\[
\beta^{*}: \ZZ_{C'}(\C ) \to V_{C'}^{\rk}(\C ),
\] given by taking an irreducible subvariety to the point of $V_{C'}^{\rk}(\C)$ which corresponds to the unique generic point of the subvariety. The results of \cref{SS:relatingvarieties} imply that there is a canonical identification of $\ZZ_{C'}(\C)$ and $\ZZ_{C}(\C)$ and so we obtain a bijection 
\begin{equation}\label{E:betastar}
\beta^{*}: \ZZ_{C}(\C ) \to V_{C}^{\rk}(\C ).
\end{equation}

Given an $\bar{A}$-module, $M$, we set
\begin{equation} \label{E:barArankvariety}
V_{\bar{A}}^{\rk }(M)=V_{C}^{\rk }(F(M)),
\end{equation}  where $F$ is the functor given in \cref{E:Fdef}. We also note that combining  \cref{E:ACvarieties} and \cref{E:betastar} provides a bijection 
\begin{equation} \label{E:betastarbarA}
\beta^{*}: W_{\bar{A}}(\C ) = \ZZ_{\bar{A}}(\C) \to V_{\bar{A}}^{\rk}(\C).
\end{equation} 

\section{Comparison of supports for \texorpdfstring{$A$ and $\bar{A}$}{A and gr(A)}} \label{S:comparison}

\subsection{} We will first consider the stable module category for $C$. Let ${\mathcal V}$ be a specialization closed set in $X$, 
and let $L_{\mathcal V}$ and $\Gamma_{\mathcal V}$ be the (co)localization functors as defined in \cite[Definition 4.6]{BIK}. If $M\in \text{Stmod}(C)$ then 
by \cite[Theorem 5.6]{BIK}
\begin{equation}\label{eq:Wsupportincl1}
W_{C}(\Gamma_{\mathcal V}(M))={\mathcal V}\cap W_{C}(M)
\end{equation} 
\begin{equation}\label{eq:Wsupportincl2}
W_{C}(L_{\mathcal V}(M))=(X -{\mathcal V})\cap W_{C}(M).
\end{equation}

Let $P\in X $ and set 
$${\mathcal V}(P)=\{P^{\prime}\in X \mid P\subseteq P^{\prime}\},$$ 
$${\mathcal Z}(P)=\{P^{\prime}\in X \mid P^{\prime}\nsubseteq P \},\quad \text{so that}$$
$$X -{\mathcal Z}(P)=\{P^{\prime}\in X \mid P^{\prime} \subseteq P\}.$$ 

The points $x$ in $V_{C}^{\rank}(\C)$ correspond to primes in $\Proj(S^{\bullet}({\mathfrak u}^{*}))$. 
Denote this correspondence by $\beta(x)={P}_{x}$, with $\beta^{*}=\beta^{-1}$. 

The following proposition shows that $V_{C}^{\rank}(-)$ shares inclusion properties analogous to one direction of the equalities \cref{eq:Wsupportincl1,eq:Wsupportincl2} for $W_{C}(-)$.  

\begin{prop}\label{P:rank inclusion} Let ${\mathcal V}$ be a specialization closed set in $X$. If $M\in \operatorname{Stmod}(C)$ then

\begin{itemize} 
\item[(a)] $V_{C}^{\operatorname{rank}}(L_{\mathcal V}(M))\subseteq \beta^{*}(X-{\mathcal V})\cap V^{\rank}_{C}(M).$
\item[(b)] $V_{C}^{\operatorname{rank}}(\Gamma_{\mathcal V}(M))\subseteq \beta^{*}({\mathcal V})\cap V_{C}^{\rank}(M)$
\item[(c)] $V_{C}^{\operatorname{rank}}(\nabla_{P}(M))\subseteq \beta^{*}(\{P\})\cap V^{\rank}_{C}(M).$
\end{itemize}
\end{prop}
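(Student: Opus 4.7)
My plan is to reduce the three rank-variety containments to the cohomological-support identities from \cref{eq:Wsupportincl1,eq:Wsupportincl2} by means of an Avrunin--Scott-type comparison. The key step will be to establish the equality $V_C^{\rank}(N) = \beta^*(W_C(N))$ for every $N \in \Stmod(C)$, extending the bijection $\beta^*$ of \cref{E:betastar} from $N = \C$ to arbitrary modules. Once such an identification is available, parts (a), (b), and (c) follow by applying $\beta^*$ to the corresponding BIK support identities for $L_{\mathcal V}(M)$, $\Gamma_{\mathcal V}(M)$, and $\nabla_P(M)$, respectively. For instance, (a) would then reduce to the chain
\[
V_C^{\rank}(L_{\mathcal V}(M)) = \beta^*(W_C(L_{\mathcal V}(M))) = \beta^*\bigl((X-\mathcal V)\cap W_C(M)\bigr) = \beta^*(X-\mathcal V)\cap V_C^{\rank}(M),
\]
with analogous computations settling (b) and (c).

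To establish the identification, I would fix $x = \sum a_i Y_{\gamma_i} \in V_{C'}^{\rank}(\C)$ with $P_x = \beta(x) \in X$ and prove that, for any $N \in \Stmod(C)$, the restriction $N_K|_{\langle x \rangle}$ is non-projective if and only if $\HH^\bullet(C, N)_{P_x} \neq 0$. The forward direction will use the restriction map $\HH^\bullet(C'_K, K) \to \HH^\bullet(\langle x \rangle, K)$, which under the identifications of \cref{SS:relatingvarieties} corresponds on the even part to the quotient $S^\bullet(\fu^*_K) \twoheadrightarrow S^\bullet(\fu^*_K)/P_x$; non-vanishing cohomology over $\langle x \rangle$ then forces the image of $\HH^\bullet(C, N)$ in the localization at $P_x$ to be non-zero. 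The reverse direction will rely on Dade's Lemma together with the Koszul-object technology of \cite[Section 5]{BIK}, which converts non-vanishing of the localized cohomology into an explicit obstruction to projectivity over $\langle x \rangle$.

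The main obstacle will be extending this Avrunin--Scott identification from compact $N$ (where it is essentially proved in \cite{BEH}) to arbitrary $N \in \Stmod(C)$. For infinite-dimensional $N$, the classical proof via Carlson modules is not directly available, and one must instead exploit the general localization framework of \cite{BIK} together with the field extension $K$ built into the definition of $V_C^{\rank}$. I expect that the idempotency of $L_{\mathcal V}$, $\Gamma_{\mathcal V}$, and $\nabla_P$, combined with the compatibility of cohomology with filtered colimits, will allow one to approximate $N$ by compact Koszul pieces and pass to the limit; this limiting step is likely where equality degrades to the one-sided containments stated in the proposition.
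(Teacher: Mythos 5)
Your plan hinges on a key lemma that is much stronger than the proposition itself and that neither the paper nor \cite{BEH} provides: the identification $V_{C}^{\rank}(N)=\beta^{*}(W_{C}(N))$ for \emph{arbitrary} $N\in\Stmod(C)$. For compact $N$ this is essentially the Avrunin--Scott/BEH statement, but for infinite-dimensional $N$ it is a Benson--Carlson--Rickard-type theorem whose proof cannot be obtained by ``approximating $N$ by compact Koszul pieces and passing to the limit'': the approximation happens in the first variable of $\Ext$, where compatibility with (co)limits fails, and neither $V_{C}^{\rank}$ nor $W_{C}$ is controlled by such an approximation. Moreover, the direction you can actually extract from restriction maps (rank-variety membership at $x$ forces $\Ext^{\bullet}_{C}(E,N)_{P_{x}}\neq 0$) lands you in $\ZZ_{C}(N)$, not $W_{C}(N)$; these coincide for compact objects but not in general, so the BIK identities for $W_{C}(L_{\mathcal V}(M))$ and $W_{C}(\Gamma_{\mathcal V}(M))$ cannot be transported through $\beta^{*}$ the way your displayed chain requires. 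Finally, the fact that the proposition asserts only one-sided containments is not an artifact of a ``degrading limit''; it reflects that only the one-directional comparison is available.

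The paper's actual argument avoids all of this. It first proves only the weak claim that $\zeta\in V^{\rank}_{\langle\zeta\rangle}(M)$ implies $\Ext^{\bullet}_{C}(E,M)_{P_{\zeta}}\neq 0$ (a concrete computation with the generic point $(x_{1}-\zeta_{1},\dots,x_{N}-\zeta_{N})$ and the field extension $K$). It then gets $V_{C}^{\rank}(L_{\mathcal V}(M))\subseteq\beta^{*}(X-\mathcal V)$ and $V_{C}^{\rank}(\Gamma_{\mathcal V}(M))\subseteq\beta^{*}(\mathcal V)$ by feeding this nonvanishing into the localization machinery (applying $L_{\mathcal Z(P_{\zeta})}$ and using the $W_{C}$-support formulas to reach a contradiction). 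Crucially, the intersections with $V_{C}^{\rank}(M)$ in (a) and (b) are \emph{not} obtained from any support identification: they come from restricting the localization triangle $L_{\mathcal V}(M)\to M\to\Gamma_{\mathcal V}(M)$ to the subalgebra $\langle\zeta\rangle$, where projectivity of one term forces a stable isomorphism of the other two. Your proposal has no substitute for this step. To repair your argument you would either need to prove the full two-sided identification for non-compact objects (a substantial theorem in its own right, which would in fact render much of the paper's subsequent work unnecessary), or abandon the identification and adopt the triangle-restriction argument directly.
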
 

\begin{proof} Recall $E=\oplus_{\lambda \in X_{1}} \lambda$. Let $\zeta\in K^{n}$ and 
$$\text{res}:\text{Ext}^{\bullet}_{C_{K}}(E, M) \rightarrow\Ext ^{\bullet}_{\langle \zeta \rangle}(E, M)$$
be the restriction map with $\text{Ext}^{\bullet}_{\langle \zeta \rangle}(E, M)\cong\Ext ^{\bullet}_{C_{K}}(\text{coind}_{\langle \zeta \rangle}^{C_{K}} E, M)$. 
This induces an injection of supports  
$$\ZZ_{\langle \zeta \rangle}(E, M)\hookrightarrow \ZZ_{C_{K}}(E, M).$$ 

Next we claim: 
\begin{equation} \label{E:RankInclusionClaim}
\text{if } \zeta\in V_{\langle \zeta \rangle}^{\text{rank}}(M)\subseteq V_{C}^{\text{rank}}(M) \text{ then }\Ext ^{\bullet}_{C}(E, M)_{P_{\zeta}}\neq 0.
\end{equation}
One has an inclusion of rings $S^{\bullet}({\mathfrak u}_{\mathbb C})\subseteq S^{\bullet}({\mathfrak u}_{K})$ which induces a map 
$\Spec(S^{\bullet}({\mathfrak u}_{K}))\rightarrow \Spec(S^{\bullet}({\mathfrak u}_{\mathbb C}))$ defined by $P\rightarrow P\cap S^{\bullet}({\mathfrak u}_{\mathbb C})$. 
By the construction of the generic point (see \cite[Sections 2 and 3]{BCR} for details and notation; here again $N=|\Phi^{+}|$), 
$$(x_{1}-\zeta_{1},\dots, x_{N}-\zeta_{N})\cap S^{\bullet}({\mathfrak u}_{\mathbb C}) =P_{\zeta}.$$ 
Now suppose that $\text{Ext}^{\bullet}_{C}(E, M)_{P_{\zeta}}=0$. Since $\zeta\in V_{\langle \zeta \rangle}^{\text{rank}}(M)$, it follows that 
$$\text{Ext}^{\bullet}_{C_{K}}(E, M)_{(x_{1}-\zeta_{1},\dots,x_{N}-\zeta_{N})}\neq 0.$$ 
But, 
$\text{Ext}^{\bullet}_{C_{K}}(E, M)\cong K\otimes\Ext ^{\bullet}_{C}(E , M)$. 
Let $\alpha\otimes m\in K\otimes\Ext ^{\bullet}_{C}(E, M)$. There exists $t\in S^{\bullet}({\mathfrak u}_{\mathbb C})-P_{\zeta}$ 
with $(1\otimes t).(\alpha\otimes m)=0$. Now if $1\otimes t\in (x_{1}-\zeta_{1},\dots, x_{N}-\zeta_{N})$ then $t\in (x_{1}-\zeta_{1},\dots, x_{N}-\zeta_{N})\cap 
S^{\bullet}({\mathfrak u}_{\mathbb C})=P_{\zeta}$, which cannot occur. This shows that $\text{Ext}^{\bullet}_{C_{K}}(E, M)_{(x_{1}-\zeta_{1},\dots,x_{N}-\zeta_{N})}=0$,
which is a contradiction. The claim at the beginning of the paragraph now follows.  

The proofs of parts (a) and (b) follow in four steps. 
\vskip .25cm 

\noindent 
(i) First, we show that  $V_{C}^{\operatorname{rank}}(L_{\mathcal V}(M))\subseteq \beta^{*}(X -{\mathcal V})$. 
Suppose that $\zeta\in V_{C}^{\operatorname{rank}}(L_{\mathcal V}(M))$; then by \cref{E:RankInclusionClaim},
$\text{Ext}^{\bullet}_{C}(E, L_{\mathcal V}(M))_{P_{\zeta}}\neq 0$. 
From \cite[Theorem 4.7]{BIK}, it follows that  
$$\text{Ext}^{\bullet}_{C}(E, L_{{\mathcal Z}(P_{\zeta})}L_{\mathcal V}(M))\neq 0.$$ 
Thus, $L_{{\mathcal Z}(P_{\zeta})}L_{\mathcal V}(M)\neq 0$. 

Using \cref{eq:Wsupportincl2} we have 
$$W_{C}(L_{{\mathcal Z}(P_{\zeta})}L_{\mathcal V}(M))\subseteq (X -{\mathcal Z}(P_{\zeta}))\cap (X -{\mathcal V}) \cap W_{C}(M).$$
Suppose that $P_{\zeta}\in {\mathcal V}$. If $P\in  (X -{\mathcal Z}(P_{\zeta}))\cap (X -{\mathcal V}) $ then $P_{\zeta}\subseteq P$. 
But ${\mathcal V}$ is specialization closed so $P\in {\mathcal V}$, which is a contradiction. Therefore, 
$W_{C}(L_{{\mathcal Z}(P_{\zeta})}L_{\mathcal V}(M))=\varnothing$; that is, $L_{{\mathcal Z}(P_{\zeta})}(L_{\mathcal V}(M))=0$, which also cannot occur. 
We can now conclude that $\zeta\in \beta^{*}(X -{\mathcal V})$. 
\vskip .25cm

\noindent 
(ii) Second, $V_{C}^{\operatorname{rank}}(\Gamma_{\mathcal V}(M))\subseteq \beta^{*}({\mathcal V})$. 
Suppose that $\zeta\in V_{C}^{\operatorname{rank}}(\Gamma_{\mathcal V}(M))$; then by \cref{E:RankInclusionClaim}, $P_{\zeta}\in Z_{C}(E, 
\Gamma_{\mathcal V}(M))$. It follows by (\ref{eq:Wsupportincl1}) and \cite[Theorem 5.15]{BIK} that $P_{\zeta}\in {\mathcal V}=\text{cl}({\mathcal V})$. Consequently, 
$\zeta\in \beta^{*}({\mathcal V})$. 
\vskip .25cm 

\noindent 
(iii) Third, $V_{C}^{\operatorname{rank}}(\Gamma_{\mathcal V}(M))\subseteq  V^{\text{rank}}_{C}(M)$. Suppose that 
$\zeta\in V_{C_{K}}^{\operatorname{rank}}(\Gamma_{\mathcal V}(M))$. From (ii), $\zeta\in \beta^{*}({\mathcal V})$. One has the distinguished triangle 
\begin{equation}\label{E:distinguished} 
\rightarrow L_{\mathcal V}(M)\rightarrow M \rightarrow \Gamma_{\mathcal V}(M) \rightarrow 
\end{equation} 
From (i), $L_{\mathcal V}(M)|_{\langle \zeta \rangle}$ is projective. Hence $M|_{\langle \zeta \rangle} 
\cong \Gamma_{\mathcal V}(M)|_{\langle \zeta \rangle}$. Therefore, by using the definition of the rank variety, one 
has $\zeta\in V^{\text{rank}}_{C_{K}}(M)$. Now (b) follows from (ii) and (iii).

\vskip .25cm
\noindent
(iv) Finally, we show that $V_{C}^{\operatorname{rank}}(L_{\mathcal V}(M))\subseteq V^{\text{rank}}_{C}(M)$. Using the distinguished triangle (\ref{E:distinguished}) and (iii), 
one has 
$$V_{C}^{\operatorname{rank}}(L_{\mathcal V}(M))\subseteq V_{C}^{\operatorname{rank}}(\Gamma_{\mathcal V}(M)) \cup 
V_{C}^{\operatorname{rank}}(M) \subseteq  V_{C}^{\operatorname{rank}}(M).$$
Together with (i) this proves (a).
\vskip .25cm 

(c) From parts (a) and (b), one has 
\begin{align*} 
V_{C}^{\text{rank}}(\nabla_{P}(M))&\subseteq  \beta^{*}({\mathcal V}(P))\cap V_{C}^{\text{rank}}(L_{{\mathcal Z}(P)}(M)) \\ 
&\subseteq  \beta^{*}({\mathcal V}(P))\cap \beta^{*}(X -{\mathcal Z}(P))\cap V_{C}^{\text{rank}}(M) \\
&\subseteq  \beta^{*}(\{P\})\cap V_{C}^{\text{rank}}(M). \qedhere
\end{align*}
\end{proof} 

\subsection{Tensor Product Theorems} For our purposes we will need versions of tensor product theorems for both $A$ and $\bar{A}$. We begin with $\bar A$.

\begin{theorem}\label{T:grWintersection} Let $Q_{1} \in \operatorname{stmod}(\bar{A})$ and $Q_{2} \in \operatorname{Stmod}(\bar{A})$. If $Q_{1}\otimes Q_{2}=0$ in $\operatorname{Stmod}(\Delta)$ then 
${W}_{\bar{A}}(Q_{1})\cap {W}_{\bar{A}}(Q_{2})=\varnothing$. 
\end{theorem}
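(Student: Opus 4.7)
The plan is to argue by contradiction via rank varieties, reducing the statement to a tensor product property for modules over a truncated polynomial algebra. Assume there exists $P \in W_{\bar{A}}(Q_{1}) \cap W_{\bar{A}}(Q_{2})$. Since $Q_{1}$ is compact, \eqref{E:WequalsZforfd-barA} gives $W_{\bar{A}}(Q_{1}) = \ZZ_{\bar{A}}(Q_{1})$; together with \eqref{E:ACWvarieties} and \eqref{E:ACvarieties} this translates to $P$ lying in $W_{C}(F(Q_{1})) \cap W_{C}(F(Q_{2}))$. Passing to the rank variety via the bijection $\beta^{*}$ of \eqref{E:betastar} (combined with the fact that $\nabla_{P}(\C)$ detects non-projectivity along the $\langle x \rangle$ corresponding to $P$), one obtains a non-zero element $x = \beta^{*}(P) \in V_{C'}^{\rk}(\C) \subseteq C'_{K}$ such that neither $K \otimes_{\C} F(Q_{1})$ nor $K \otimes_{\C} F(Q_{2})$ is projective as a module over the cyclic subalgebra $\langle x \rangle \cong K[t]/(t^{\ell})$.

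Next I translate the hypothesis. The assumption $Q_{1} \otimes Q_{2} = 0$ in $\Stmod (\Delta)$ says precisely that $Q_{1} \otimes Q_{2}$ is projective as a $\Delta$-module, where $\Delta \subseteq \bar{A}\otimes\bar{A}$ is the subalgebra from \cref{SS:TensorAbarmodules}. Since the original coproduct on $u_{\zeta}(\fb)$ satisfies $E_{\alpha} \mapsto E_{\alpha}\otimes 1 + K_{\alpha}\otimes E_{\alpha}$, the graded diagonal has the analogous form $X_{\alpha} \mapsto X_{\alpha}\otimes 1 + K_{\alpha}\otimes X_{\alpha}$ on generators. Applying the Morita equivalence and restriction defining $F$, one obtains a corresponding diagonal subalgebra $\Delta_{C} \subseteq C \otimes C$, and the projectivity of $Q_{1}\otimes Q_{2}$ over $\Delta$ yields projectivity of $F(Q_{1}) \otimes F(Q_{2})$ as a $\Delta_{C}$-module. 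Restricting further to the cyclic subalgebra of $\Delta_{C,K}$ generated by the diagonal image $\tilde{x} := x \otimes 1 + g \otimes x$, where $g$ is the product of group-like elements corresponding to the support of $x$, one concludes that $F(Q_{1})\otimes F(Q_{2})$ is projective over $\langle \tilde{x}\rangle$, and $\langle \tilde{x}\rangle \cong K[t]/(t^{\ell})$ since $\tilde{x}^{\ell} = 0$ by the standard quantum binomial calculation (using the relation $gx = \zeta^{m}xg$ arising from the commutation relations of $C'$).

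The crux is now a tensor product theorem for truncated polynomial algebras in the quantum setting: for $\tilde{x} = x \otimes 1 + g \otimes x$ with $g$ group-like satisfying $gx = \zeta^{m}xg$, a tensor product $M \otimes N$ is projective over $\langle \tilde{x}\rangle$ if and only if $M$ is projective over $\langle x \rangle$ or $N$ is. This is precisely the content of the rank-variety analysis in \cite[Section 5]{BEH} for quantum complete intersections with equal parameter $\zeta$, which is exactly why the intermediate algebra $C$ was introduced in \cref{SS:QCI}. Applied to $M = K \otimes F(Q_{1})$ and $N = K \otimes F(Q_{2})$, it contradicts the non-projectivity established in the first paragraph, completing the proof. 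The main obstacle I expect is the bookkeeping in the second paragraph: carefully identifying the cyclic subalgebra $\langle \tilde{x}\rangle$ inside $\Delta_{C,K}$ and verifying that the generator has precisely the form required to invoke the BEH tensor product theorem; in particular one must confirm that the filtration underlying $\Delta \subseteq \bar{A}\otimes\bar{A}$ is compatible with the construction of $F$, so that the diagonal embedding of $x$ in the rank variety of $C$ indeed controls the cohomological behavior of $Q_{1}\otimes Q_{2}$.
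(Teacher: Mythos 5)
Your first paragraph is sound and matches the paper's opening moves: reduce via \cref{E:ACWvarieties}, Dade's Lemma for $C$, and \cref{P:rank inclusion}(c) to a single point $x=\beta^{*}(P)$ at which both $S\boxtimes Q_{1}$ and $S\boxtimes Q_{2}$ fail to be projective over $\langle x\rangle$. The gap is in your ``crux.'' The tensor product theorem you invoke --- that $M\otimes N$ is projective over the diagonal cyclic subalgebra $\langle \tilde{x}\rangle$ if and only if $M$ or $N$ is projective over $\langle x\rangle$ --- is \emph{not} proved in \cite{BEH}; their Section 5 gives Dade's Lemma and the identification of rank and cohomological supports for a quantum complete intersection, but no tensor product property (their algebras are not Hopf algebras, and the paper's introduction explicitly flags the tensor product property for small quantum groups as open). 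Assuming it at the level of cyclic subalgebras is assuming precisely the hard content that \cref{T:grWintersection} is designed to supply by another route. There is also a concrete obstruction to even setting up your reduction: a generic point $x=\sum_{i}a_{i}Y_{\gamma_{i}}$ has diagonal image $\sum_{i}a_{i}(Y_{\gamma_{i}}\otimes 1+g_{i}\otimes Y_{\gamma_{i}})$ with \emph{different} group-likes $g_{i}$ for different roots, so it is not of the form $x\otimes 1+g\otimes x$ for a single $g$; the quantum binomial argument for $\tilde{x}^{\ell}=0$ and the identification $\langle\tilde{x}\rangle\cong K[t]/(t^{\ell})$ do not go through as stated. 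Moreover the restriction of $M\otimes N$ to such a subalgebra depends on the action of the group-likes on $M$, not just on the $\langle x\rangle$-module structures of $M$ and $N$, so the statement is not even well posed as a claim about $\langle x\rangle$-restrictions.

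The paper's proof avoids any such pointwise tensor product theorem. From the hypothesis it deduces, via the adjunction $\Ext^{\bullet}_{\bar{A}}(Q_{1}^{*},Q_{2})\cong\Ext^{\bullet}_{\Delta}(\C,Q_{1}\otimes Q_{2})$ of \cref{thm:injectivemap} and the Morita equivalence, that $\Ext^{\bullet}_{B}(S\boxtimes Q_{1}^{*},S\boxtimes Q_{2})=0$. On the other side, using $V^{\rk}_{C}(S\boxtimes Q_{1})=V^{\rk}_{C}(S\boxtimes Q_{1}^{*})$ it extracts non-projective $\langle x\rangle$-summands of $S\boxtimes Q_{1}^{*}$ and $S\boxtimes Q_{2}$ and concludes $\Ext^{1}_{\langle x\rangle}(S\boxtimes[Q_{1}^{*}\otimes E],S\boxtimes Q_{2})\neq 0$. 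Finally, a commutative diagram built from coinduction $\operatorname{coind}_{\langle x\rangle}^{B_{K}}$ and composition series with one-dimensional sections shows the $\langle x\rangle$-support injects into the $B_{K}$-support, which is empty --- the contradiction. If you want to salvage your outline, you would need to replace the appeal to a BEH tensor product theorem with an Ext-vanishing argument of this kind.
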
 

\begin{proof} 
We will argue by contradiction. Suppose that $P\in {W}_{\bar{A}}(Q_{1})\cap {W}_{\bar{A}}(Q_{2})$. Then 
by (\ref{E:ACWvarieties}), 
$$P\in {W}_{C}(S\boxtimes Q_{1})\cap {W}_{C}(S\boxtimes Q_{2}).$$ 
It follows that for $j=1,2$ one has $\nabla_{P}(S\boxtimes Q_{j})\neq 0$. Since Dade's Lemma is true for $C$ (see \cref{SS:DadesLemma}), $V_{C}^\text{rank}(\nabla_{P}(S\boxtimes Q_{j}))\neq \varnothing$.

From \cref{P:rank inclusion}(c), one has $V_{C}^\text{rank}(\nabla_{P}(S\boxtimes Q_{j}))= \{x \}$, where $\beta^{*}( P)=\{x\}$. 
Combining this with the previous statement shows that 
$$V_{C}^\text{rank}(\nabla_{P}(S\boxtimes Q_{1}))\cap V_{C}^\text{rank}(\nabla_{P}(S\boxtimes Q_{2}))=\{x\}.$$  
Also by \cref{P:rank inclusion}(c) one has that $V_{C}^\text{rank}(\nabla_{P}(S\boxtimes Q_{j}))\subseteq V_{C}^\text{rank}(S\boxtimes Q_{j})$. Consequently, $x\in {V}_{C}^{\text{rank}}(S\boxtimes Q_{1})\cap {V}_{C}^{\text{rank}}(S\boxtimes Q_{2})$.  

Suppose that $\text{Ext}^{1}_{\langle x \rangle}(S\boxtimes [Q_{1}^{*}\otimes  E],S\boxtimes Q_{2})=0$, where, as before, $E=\oplus_{\lambda\in X_{1}}\lambda$. Then 
the summand 
\begin{equation} \label{Ext1Q1Q2} 
\text{Ext}^{1}_{\langle x \rangle}(S\boxtimes Q_{1}^{*},S\boxtimes Q_{2})=0.  
\end{equation} 
By (\ref{E:varietiesofduals}), ${Z}_{\bar{A}}(Q_{1})={Z}_{\bar{A}}(Q_{1}^{*})$. 
According to \cref{E:ACvarieties} and the main result of \cite{BeEr:11}, for a finite-dimensional $\bar{A}$-module $M$, 
$${Z}_{\bar{A}}(M)={Z}_{C}(S\boxtimes M)\cong V^{\text{rank}}_{C}(S\boxtimes M).$$
It now follows that 
$$V_{C}^{\rank}(S\boxtimes Q_{1})=V_{C}^{\rank}(S\boxtimes Q_{1}^{*}).$$ 
This shows that $S\boxtimes Q_{1}$ is projective over $\langle x \rangle$ if and only if $S\boxtimes Q_{1}^{*}$ is projective over $\langle x \rangle$. 
By assumption, $S\boxtimes Q_{1}$ is not projective over $\langle x \rangle$ so $S\boxtimes Q_{1}^{*}$ is not projective over 
$\langle x \rangle$. Therefore, there exists a non-projective $\langle x \rangle$-summand $Z_{1}$ of $S\boxtimes Q_{1}^{*}$. Similarly, $S\boxtimes Q_{2}$ has a non-projective $\langle x \rangle$-summand $Z_{2}$. 
From \cref{Ext1Q1Q2}, $\text{Ext}^{1}_{\langle x \rangle}(Z_{1},Z_{2})=0$. This is a contradiction, 
and hence
\begin{equation} \label{E:nonzeroExtoverx}
\text{Ext}^{1}_{\langle x \rangle}(S\boxtimes [Q_{1}^{*}\otimes  E],S\boxtimes Q_{2})\neq 0. 
\end{equation}

By the finite-dimensionality of $Q_{1}$ and Theorem~\ref{thm:injectivemap},
$$\text{Ext}^{\bullet}_{\bar{A}}(Q_{1}^{*},Q_{2})\cong\Ext ^{\bullet}_{\Delta}({\mathbb C},Q_{1}\otimes Q_{2}).$$ 
Moreover, from the equivalence of categories, 
$$\text{Ext}^{\bullet}_{\bar{A}}(Q_{1}^{*},Q_{2}) \cong\Ext ^{\bullet}_{B}(S\boxtimes Q_{1}^{*}, S\boxtimes Q_{2}).$$ 
Therefore, if $Q_{1}\otimes Q_{2}=0$ then $\text{Ext}^{\bullet}_{B}(S\boxtimes Q_{1}^{*}, S\boxtimes Q_{2})=0$.  

Let $Q$ be a finite-dimensional $\bar{A}$-module. Then 
$\text{Ext}^{\bullet}_{B}(S\boxtimes Q^{*},S\boxtimes Q^{*})$ acts via Yoneda product on $\text{Ext}^{\bullet}_{B}(S\boxtimes Q^{*},S\boxtimes Q_{2})$. 
Now by using the equivalence and Theorem~\ref{thm:injectivemap}(b), one has ring isomorphisms
$$\text{Ext}^{\bullet}_{B}(S\boxtimes Q^{*},S\boxtimes Q^{*})\cong\text{Ext}^{\bullet}_{\bar{A}}(Q^{*},Q^{*})\cong\Ext ^{\bullet}_{\Delta}({\mathbb C},Q\otimes Q^{*}).$$ 
Moreover, there is a ring homomorphism, 
$$S^{\bullet}({\mathfrak u}^{*})\cong\Ext ^{\bullet}_{\Delta}({\mathbb C},{\mathbb C}) \rightarrow\Ext ^{\bullet}_{\Delta}({\mathbb C},Q\otimes Q^{*}).$$ 
Combining these facts, one has a ring homomorphism, 
$$S^{\bullet}({\mathfrak u}^{*})\rightarrow\Ext ^{\bullet}_{B}(S\boxtimes Q^{*},S\boxtimes Q^{*})$$
which induces an action of $S({\mathfrak u}^{*})$ on $\text{Ext}^{\bullet}_{B}(S \boxtimes Q^{*},S \boxtimes Q_{2})$. 
One also has the following commutative diagram that involves restriction of cohomology. 
$$
\CD
\Ext^{\bullet}_{B}(S\boxtimes Q^{*},S\boxtimes Q^{*}) @>\text{res}>>\Ext ^{\bullet}_{\langle x \rangle}(S\boxtimes Q^{*},S\boxtimes Q^{*}) \\
@AAA @AAA           \\
S^{\bullet}({\mathfrak u}^{*}) @>\text{res}>> S^{\bullet}(\langle x \rangle ^{*})
\endCD
$$

This setup can be now used when $Q=Q_{1}^{*}$ (resp.\ $Q=\oplus_{\lambda\in X_{1}}\lambda$) to define the topological spaces in the 
commutative diagram below. 

$$
\CD
{Z}_{\langle x \rangle }(S\boxtimes Q_{1}^{*}, S\boxtimes Q_{2}) @>\text{res}^{*} >> {Z}_{B_{K}}(S\boxtimes Q_{1}^{*}, S\boxtimes Q_{2}) \\
@VVV @VVV           \\
{Z}_{\langle x \rangle }(\oplus_{\lambda\in X_{1}} S\boxtimes \lambda, S\boxtimes Q_{2}) @>\text{res}^{*} >> {Z}_{B_{K}}(\oplus_{\lambda\in X_{1}} S\boxtimes \lambda, S\boxtimes Q_{2}) 
\endCD
$$
The vertical maps are inclusions since $S\boxtimes Q_{1}^{*}$ has a composition series with sections of the form $S\boxtimes \sigma$ where $\sigma \in X_{1}$. 
The bottom horizontal map 
$$\text{res}^{*}:{Z}_{\langle x \rangle }(\oplus_{\lambda\in X_{1}} S\boxtimes \lambda, S\boxtimes Q_{2}) \hookrightarrow {Z}_{B_{K}}(\oplus_{\lambda\in X_{1}} S\boxtimes \lambda, S\boxtimes Q_{2})$$
is an inclusion because 
$$\text{Ext}^{\bullet}_{\langle x \rangle }(\oplus_{\lambda\in X_{1}} S\boxtimes \lambda, S\boxtimes Q_{2})\cong\Ext ^{\bullet}_{B_{K}}(\text{coind}_{\langle x \rangle}^{B_{K}}(\oplus_{\lambda\in X_{1}} S\boxtimes \lambda), S\boxtimes Q_{2}),$$ 
and $\text{coind}_{\langle x \rangle}^{B_{K}}(\oplus_{\lambda\in X_{1}} S\boxtimes \lambda)$ has a composition series with sections of the form $S\boxtimes \sigma$ where $\sigma\in X_{1}$.  Hence the top map $\text{res}^{*}$ is also an inclusion. But
since $\text{Ext}^{\bullet}_{B}(S\boxtimes Q_{1}^{*}, S\boxtimes Q_{2})=0$, it follows that ${Z}_{B_{K}}(S\boxtimes Q_{1}^{*}, S\boxtimes Q_{2})=\varnothing.$
This yields a contradiction because from \cref{E:nonzeroExtoverx}, ${Z}_{\langle x \rangle }(S\boxtimes Q_{1}^{*}, S\boxtimes Q_{2})\neq \varnothing$. \qedhere

\end{proof}

\subsection{} We also need a version of the tensor product theorem for $A$. The following theorem generalizes the result stated in \cite[Theorem 2.5]{FeW} to infinitely generated modules. 
Recall that for $\zeta\in \operatorname{H}^{n}(A,{\mathbb C})$, let $L_{\zeta}$ be the kernel of the map $\zeta:\Omega^{n}({\mathbb C})\rightarrow {\mathbb C}$ (i.e., the Carlson module). Note that $L_{\zeta}$ is a finite dimensional $A$-module.

\begin{theorem}\label{T:Lzetatensor} Let $M\in \operatorname{Stmod}(A)$, and $\zeta, \zeta_{i}\in R$ be homogeneous elements of positive degree ($1\le i\le t)$. 
\begin{itemize} 
\item[(a)] $W_{A}(L_{\zeta}\otimes M)=W_{A}(L_{\zeta})\cap W_{A}(M)$.
\item[(b)]  $W_{A}(\otimes_{i=1}^{t} L_{\zeta_{t}}\otimes M)=[\cap_{i=1}^{t} W_{A}(L_{\zeta})]\cap W_{A}(M)$. 
\end{itemize}
\end{theorem}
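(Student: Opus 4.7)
The plan is to use Theorem~\ref{T:NablaAsTensor} to reduce to a pointwise analysis at each prime $P \in X$, and then exploit the defining triangle of the Carlson module together with the local properties of $\nabla_P$ developed in \cite{BIK}.

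First, by Theorem~\ref{T:NablaAsTensor}, for every $P\in X$,
\[
\nabla_P(L_\zeta \otimes M) \cong L_\zeta \otimes M \otimes \nabla_P(\C) \cong L_\zeta \otimes \nabla_P(M),
\]
so $P \in W_A(L_\zeta \otimes M)$ if and only if $L_\zeta \otimes \nabla_P(M) \neq 0$ in $\Stmod(A)$. Tensoring the defining triangle $L_\zeta \to \Omega^n(\C) \xrightarrow{\zeta} \C \to \Sigma L_\zeta$ with $\nabla_P(M)$ produces the triangle
\[
L_\zeta \otimes \nabla_P(M) \to \Sigma^{-n}\nabla_P(M) \xrightarrow{\zeta\cdot} \nabla_P(M) \to \Sigma(L_\zeta \otimes \nabla_P(M)),
\]
so $L_\zeta \otimes \nabla_P(M) = 0$ exactly when multiplication by $\zeta$ is an isomorphism on $\nabla_P(M)$.

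The crucial step is the claim that $\zeta\cdot$ is an isomorphism on $\nabla_P(M)$ if and only if $\nabla_P(M)=0$ or $\zeta \notin P$. For the easy direction, when $\zeta \notin P$ the functor $L_{\mathcal Z(P)}$ inverts every homogeneous element of $R \setminus P$, and hence $\zeta$ acts invertibly on $\nabla_P(M) = \Gamma_{\mathcal V(P)} L_{\mathcal Z(P)}(M)$. Conversely, suppose $\zeta \in P$ and $\nabla_P(M) \neq 0$. Then $W_A(\nabla_P(M)) = \{P\}$ by the orthogonality properties of the local functors, so the Koszul object of $\nabla_P(M)$ at $\zeta$ (which agrees up to a shift with $L_\zeta \otimes \nabla_P(M)$) has support $W_A(\nabla_P(M)) \cap \V(\zeta) = \{P\}$ by \cite[Theorem 5.13]{BIK}, and is therefore nonzero; equivalently, $\zeta$ does not act invertibly on $\nabla_P(M)$.

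Combining these two directions, $P \in W_A(L_\zeta \otimes M)$ if and only if $P \in W_A(M)$ and $\zeta \in P$, that is, if and only if $P \in W_A(M) \cap W_A(L_\zeta)$, which proves (a). Part (b) then follows by a straightforward induction on $t$: apply part (a) with $M$ replaced by $L_{\zeta_1} \otimes \cdots \otimes L_{\zeta_{t-1}} \otimes M$ and invoke the inductive hypothesis. The main obstacle is the second direction of the key claim above; making it rigorous requires carefully identifying $L_\zeta \otimes \nabla_P(M)$ with a Koszul object in the sense of \cite[Definition 5.10]{BIK} so that their support formula can be applied, which is where the interaction between the Carlson module construction and the BIK framework has to be spelled out.
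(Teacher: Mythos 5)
Your overall strategy --- reduce to the local objects $\nabla_{P}(M)$ via Theorem~\ref{T:NablaAsTensor}, use the Carlson triangle to translate vanishing of $L_{\zeta}\otimes\nabla_{P}(M)$ into invertibility of $\zeta$, and then rule this out when $\zeta\in P$ --- is essentially the paper's localize-at-$P$ strategy, and the easy direction of your key claim (if $\zeta\notin P$ then $\zeta$ acts invertibly on $\nabla_{P}(M)$) is fine. But the decisive step, your ``hard direction'' ($\zeta\in P$ and $\nabla_{P}(M)\neq 0$ imply $L_{\zeta}\otimes\nabla_{P}(M)\neq 0$), is not actually proved. You deduce it from the assertion that the Koszul object of $\nabla_{P}(M)$ at $\zeta$ has support $W_{A}(\nabla_{P}(M))\cap\V(\zeta)=\{P\}$ by \cite[Theorem 5.13]{BIK}. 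That citation does not do the job: Theorem 5.13 characterizes the support of an object via Koszul objects of a \emph{compact generator}; it is not the equality $W_{A}(X\sslash\zeta)=W_{A}(X)\cap\V(\zeta)$ for an arbitrary non-compact $X$ such as $\nabla_{P}(M)$. The containment $W_{A}(X\sslash\zeta)\subseteq W_{A}(X)\cap\V(\zeta)$ is the cheap one; the reverse containment --- the one you need --- is precisely equivalent to the statement being proved, so as written the argument is circular at its crux. (There is also the secondary issue, which you flag yourself, that in this paper's framework Koszul objects are only set up for compact objects, so $\nabla_{P}(M)\sslash\zeta$ must be interpreted as $\Sigma(L_{\zeta}\otimes\nabla_{P}(M))$ by construction rather than imported from \cite{BIK}.)

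The missing ingredient is a Nakayama-type argument for torsion modules, and that is exactly what the paper supplies. Pass from objects to the $R$-modules $\Ext^{\bullet}_{A}(E\sslash P,M)_{P}$, where $E=\oplus_{\lambda\in X_{1}}\lambda$ is the compact generator: the hypothesis $\nabla_{P}(L_{\zeta}\otimes M)=0$ gives $\Ext^{\bullet}_{A}(E\sslash P,L_{\zeta}\otimes M)_{P}=0$, and the long exact sequence obtained by tensoring $0\to L_{\zeta}\to\Omega^{n}(\C)\to\C\to 0$ with $M$ then shows, as in \cite[Theorem 2.5]{FeW}, that $\Ext^{j}_{A}(E\sslash P,M)_{P}=\zeta\cdot\Ext^{j}_{A}(E\sslash P,M)_{P}$ for all $j$. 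Since this module is $P$-local and $P$-torsion \cite[Lemma 5.11]{BIK} and $\zeta\in P$, it must vanish, whence $P\notin W_{A}(M)$, the desired contradiction. If you replace the appeal to Theorem 5.13 with this argument, your proof closes up and coincides in substance with the paper's. Part (b) by induction on $t$ is fine.
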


\begin{proof} The main verification involves (a) since (b) follows from part (a) by induction. 
One has 
$$W_{A}(L_{\zeta}\otimes M)\subseteq W_{A}(L_{\zeta})\cap W_{A}(M).$$ 
So it remains to prove the other inclusion. 
Note that 
\begin{equation} \label{E:WAforLzeta}
W_{A}(L_{\zeta})=\{P\in X \mid \zeta\in P\} \linebreak[0] = \mathcal{V}(\langle \zeta \rangle)
\end{equation}
by \cite[Theorem 2.5]{FeW}. Let 
$P\in W_{A}(L_{\zeta})\cap W_{A}(M)$, and assume that 
$P\notin W_{A}(L_{\zeta}\otimes M)$. We will argue by contradiction. 

Let  $E=\oplus_{\lambda \in X_{1}}\lambda$ be the direct sum of simple $A$-modules. 
Since $P\notin W_{A}(L_{\zeta}\otimes M)$, $\nabla_{P}(L_{\zeta}\otimes M)=0$, thus 
$\text{Ext}^{\bullet}_{A}(E,\nabla_{P}(L_{\zeta}\otimes M))=0$. From \cite[Proposition 5.2]{BIK}, it follows that 
$$\text{Ext}^{\bullet}_{A}(E \sslash P,\nabla_{P}(L_{\zeta}\otimes M))_{P}=0.$$

Consider the short exact sequence $0\rightarrow L_{\zeta}\rightarrow \Omega^{n}(\C) \rightarrow \C \rightarrow 0$ represented by $\zeta$. 
One can tensor this sequence with $M$ and apply the argument in the proof of \cite[Theorem 2.5]{FeW} to obtain 
$\text{Ext}_{A}^{j}(E\sslash P,M)_{P}=\zeta.\text{Ext}_{A}^{j}(E\sslash P,M)_{P}$ 
for all $j$. 
Now using the facts that $\text{Ext}_{A}^{\bullet}(E\sslash P,M)$ is $P$-torsion and $\zeta\in P$ (cf.\ \cite[Lemma 5.11]{BIK}) shows that 
$$\text{Ext}_{A}^{\bullet}(E\sslash P,M)_{P}=0.$$
Therefore, $\text{Ext}_{A}^{\bullet}(E,\nabla_{P}(M))=0$, thus $P\notin W_{A}(M)$, which is a contradiction. Consequently, 
$P\in W_{A}(L_{\zeta}\otimes M)$. 
\end{proof}

\subsection{Realizing Supports via Compact Objects in \texorpdfstring{$A$ and $\bar{A}$}{A and gr(A)}}  We will use the following statement several times: if $M\in \text{Stmod}(A)$ and $P\in X$ then
\begin{equation}\label{E:Wgrnablainclusion} 
W_{\bar A}(\gr \nabla_{P}(M))\subseteq \{P^{\prime}\in X\mid P^{\prime}\subseteq P\} = X - {\mathcal Z}(P). 
\end{equation} 

To verify this claim, recall $E=\oplus_{\lambda\in X_{1}}\lambda$ so that $W_{A}(E\sslash P^{\prime})={\mathcal V}(P^{\prime})$. 
Let $P^{\prime}\in W_{\bar A}(\gr \nabla_{P}(M))$, and suppose that $P^{\prime}$ is not a subset of $P$. 
Then $P\notin {\mathcal V}(P^{\prime})$, and 
$$E\sslash P^{\prime}\otimes \nabla_{P}(M)=0.$$ Therefore, 
$\gr E\sslash P^{\prime} \otimes \nabla_{P}(M)=0$, and by Theorem~\ref{T:grWintersection},
$$W_{\bar{A}}(\gr E\sslash P^{\prime})\cap W_{\bar{A}}(\gr \nabla_{P}(M))=\varnothing.$$
This is a contradiction since $P^{\prime}\in W_{A}(E\sslash P^{\prime})\subseteq W_{\bar{A}}(\gr E\sslash P^{\prime})$. 
Consequently, $P^{\prime}\subseteq P$. 

\begin{prop} \label{P:realization} Let $P\in X$. Then there exists $Y$ in $\operatorname{stmod}(A)$ such that 
$$W_{A}(Y)={\mathcal V}(P)=W_{\bar{A}}(\gr Y).$$ 
\end{prop}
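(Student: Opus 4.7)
The plan is to build $Y$ as a tensor product of Carlson modules. Since the cohomology ring $R\cong S^{\bullet}(\fu^{*})$ is Noetherian and $P\in\Proj(R)$ is homogeneous, I choose finitely many homogeneous positive-degree elements $\zeta_{1},\dots,\zeta_{t}\in R$ with $\mathcal{V}(P)=\bigcap_{i=1}^{t}\mathcal{V}(\langle\zeta_{i}\rangle)$, and set $Y=L_{\zeta_{1}}\otimes\cdots\otimes L_{\zeta_{t}}\in\stmod(A)$. Theorem~\ref{T:Lzetatensor}(b), applied iteratively with the trivial-module slot, combined with \cref{E:WAforLzeta}, immediately yields $W_{A}(Y)=\bigcap_{i} W_{A}(L_{\zeta_{i}})=\bigcap_{i}\mathcal{V}(\langle\zeta_{i}\rangle)=\mathcal{V}(P)$. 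The inclusion $\mathcal{V}(P)\subseteq W_{\bar{A}}(\gr Y)$ is then automatic from \cref{E:AvarietyinbarAvariety} together with the identifications \cref{E:WequalsZforfd} and \cref{E:WequalsZforfd-barA}, both applicable since $Y$ is finite-dimensional.

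The substantive step is the reverse inclusion $W_{\bar{A}}(\gr Y)\subseteq\mathcal{V}(P)$. This reduces to two ingredients: (i) a Carlson-type lemma asserting that $W_{\bar{A}}(\gr L_{\zeta})\subseteq\mathcal{V}(\langle\zeta\rangle)$ for every homogeneous positive-degree $\zeta\in R$, and (ii) the observation that with the tensor-product filtration on $Y=L_{\zeta_{1}}\otimes\cdots\otimes L_{\zeta_{t}}$ the associated graded $\gr Y$ identifies with $\gr L_{\zeta_{1}}\otimes\cdots\otimes \gr L_{\zeta_{t}}$ viewed as a $\Delta$-module, as set up in \cref{SS:TensorAbarmodules}. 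Granting both, the containment property (\ref{SS:supportdata}.\ref{E:supportfive}) for the quasi support data $W_{\bar{A}}$---equivalently, the contrapositive of Theorem~\ref{T:grWintersection} iterated through the rank-variety identifications \cref{E:ACvarieties} and \cref{E:betastarbarA} (where Dade's Lemma over $C^{\prime}$ supplies the full tensor-product property)---gives $W_{\bar{A}}(\gr Y)\subseteq\bigcap_{i} W_{\bar{A}}(\gr L_{\zeta_{i}})\subseteq \mathcal{V}(P)$.

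The hard part is proving the Carlson lemma (i). The defining extension $0\to L_{\zeta}\to\Omega^{n}_{A}(\C)\to\C\to 0$ representing $\zeta\in\HH^{n}(A,\C)$ must be equipped with a compatible filtration so that taking associated graded produces a short exact sequence of $\bar{A}$-modules representing the class $\bar\zeta\in\HH^{n}(\bar{A},\C)$ corresponding to $\zeta$ under the canonical iso $R\cong\HH^{\bullet}(A,\C)\cong\HH^{\bullet}(\bar{A},\C)$ from the collapsed May spectral sequence of \cref{SS:BIKforAbar}. Once this compatibility is verified, the standard Carlson-module argument of \cite{FeW}---transported to the $\bar{A}$-setting through the functor $F$ into $\Mod(C)$, where the rank variety bijection \cref{E:betastarbarA} reduces the support containment to a routine nilpotency-of-$\zeta$ calculation on $\Ext^{\bullet}_{\bar{A}}(\bigoplus_{\lambda\in X_{1}}\lambda,\gr L_{\zeta})$---delivers the lemma, and the proposition follows.
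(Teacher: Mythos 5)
Your construction of $Y=L_{\zeta_1}\otimes\cdots\otimes L_{\zeta_t}$ with $W_A(Y)=\mathcal{V}(P)$ and the inclusion $\mathcal{V}(P)=W_A(Y)\subseteq W_{\bar A}(\gr Y)$ match the paper. The problem is the reverse inclusion, where your route rests on three things that are not available. (1) You need $\gr(L_{\zeta_1}\otimes\cdots\otimes L_{\zeta_t})\cong \gr L_{\zeta_1}\otimes\cdots\otimes\gr L_{\zeta_t}$ as $\Delta$-modules; but $\gr M$ depends on a choice of generators (\cref{SS:gradedmodulecategories}), \cref{E:tensorprodgrad} only treats the \emph{outer} tensor product, and the only fact established for the internal product is the far weaker implication that $\gr(M\otimes N)=0$ in $\Stmod(\bar A)$ forces $\gr M\otimes\gr N=0$ in $\Stmod(\Delta)$ (via \cref{P:btou}). (2) You need a genuine containment $W_{\bar A}(\gr Y)\subseteq\bigcap_i W_{\bar A}(\gr L_{\zeta_i})$, i.e.\ the tensor-product half of property (\ref{SS:supportdata}.\ref{E:supportfive}) for $W_{\bar A}$ across $\Delta$-restrictions. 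The paper explicitly verifies only (\ref{SS:supportdata}.\ref{E:supportone})--(\ref{SS:supportdata}.\ref{E:supportfour}) for $\ZZ_{\bar A}$, and \cref{T:grWintersection} does not supply the missing half: it says only that if the tensor product \emph{vanishes} then the supports are disjoint. Dade's Lemma for $C'$ likewise only detects projectivity. The full tensor product property for the Borel appears only in \cref{C:tensorb}, downstream of Hopkins' theorem, which itself depends on \cref{P:realization}; invoking it here is circular. (3) Your Carlson-type lemma $W_{\bar A}(\gr L_\zeta)\subseteq\mathcal{V}(\langle\zeta\rangle)$ --- requiring a filtration of the defining extension compatible with the May spectral sequence identification of $\HH^{\bullet}(A,\C)$ with $\HH^{\bullet}(\bar A,\C)$ --- is exactly the ``hard part'' you acknowledge and leave unproved; nothing of the sort is in the paper.

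The paper's proof avoids all of this by a different mechanism: a Noetherian ``induction on primes'' establishing that $P'\in W_{\bar A}(\gr\nabla_{P'}(\C))$ for every $P'\in X$, using \cref{E:Wgrnablainclusion}, \cref{T:NablaAsTensor}, and \cref{T:grWintersection} applied to \emph{vanishing} tensor products $Y\otimes\nabla_{P'}(\C)=0$. With that in hand, if $P'\in W_{\bar A}(\gr Y)$ but $P'\notin\mathcal{V}(P)$, then $W_A(Y\otimes\nabla_{P'}(\C))=\{P'\}\cap\mathcal{V}(P)=\varnothing$, so $Y\otimes\nabla_{P'}(\C)=0$, so $\gr Y\otimes\gr\nabla_{P'}(\C)=0$ in $\Stmod(\Delta)$, and \cref{T:grWintersection} gives $W_{\bar A}(\gr Y)\cap W_{\bar A}(\gr\nabla_{P'}(\C))=\varnothing$, contradicting $P'\in W_{\bar A}(\gr\nabla_{P'}(\C))$. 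You would need to supply this (or an equivalent) ingredient; as written, your argument for $W_{\bar A}(\gr Y)\subseteq\mathcal{V}(P)$ does not go through.
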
 

\begin{proof} First we will prove that $P\in W_{\bar{A}}(\gr \nabla_{P}({\mathbb C}))$ for all $P\in X $. By \cref{E:Wgrnablainclusion},
$$W_{\bar A}(\gr \nabla_{P}({\mathbb C}))\subseteq \{P^{\prime}\mid P^{\prime}\subseteq P\}.$$ 
Using the fact that $R$ is Noetherian, we can perform ``induction on primes.'' If $P$ is a minimal prime then $W_{\bar{A}}(\gr \nabla_{P}({\mathbb C}))=\{P\}$ because $\gr \nabla_{P}({\mathbb C})$
is not projective by Proposition~\ref{P:btou}. 

Now assume that $P'\in W_{\bar{A}}(\gr \nabla_{P'}({\mathbb C}))$ for all $P'\subsetneq P$, and fix such a $P'$. Using \eqref{E:WAforLzeta} and \cite[Theorem 2.5]{FeW}, let $Y=\otimes_{\zeta}L_{\zeta}$ be such that $W_{A}(Y)={\mathcal V}(P)$. Using the hypotheses and \cref{T:NablaAsTensor}, one has 
$Y \otimes \nabla_{P'}({\mathbb C})=0$. Therefore, $\gr Y \otimes\nabla_{P'}({\mathbb C})=0$. By Theorem~\ref{T:grWintersection}, it follows that 
$W_{\bar{A}}(\gr Y) \cap W_{\bar{A}}(\gr \nabla_{P'}({\mathbb C})) =\varnothing$. This implies that $P'\notin W_{\bar{A}}(\gr Y)$. 

Since by \cite[Theorem 5.1.5]{BIK}, $P\in W_{A}(\nabla_{P}(Y))=\{P\}\cap W_{A}(Y)$, one has $\nabla_{P}(Y)\cong Y\otimes \nabla_{P}({\mathbb C})\neq 0$. Therefore, 
$\gr Y\otimes \nabla_{P}({\mathbb C})\neq 0$, and (using \cite[Theorem 5.2]{BIK})
$$\varnothing \neq W_{\bar{A}}(\gr Y \otimes\nabla_{P}(\C)) \subseteq W_{\bar{A}}(\gr Y)\cap W_{\bar{A}}(\gr \nabla_{P}({\mathbb C}))\subseteq \{P^{\prime}\in X \mid P^{\prime}\subseteq P\}.$$ 
In the previous paragraph, we showed that $\{P^{\prime}\in X \mid P^{\prime}\subsetneq P\}\cap W_{\bar{A}}(\gr Y)=\varnothing$. Consequently, 
$P\in W_{\bar{A}}(\gr \nabla_{P}({\mathbb C}))$. 

In order to prove the statement of the proposition, note that for $Y=\otimes_{\zeta}L_{\zeta}$: (i) $W_{A}(Y)={\mathcal V}(P)$, and (by \cref{E:AvarietyinbarAvariety} and \cref{E:WequalsZforfd})
(ii) $W_{A}(Y)\subseteq W_{\bar{A}}(\gr Y)$. Suppose that $P'\in  W_{\bar{A}}(\gr Y)$ with $P'\notin{\mathcal V}(P)$. 
Then 
$$W_{A}(Y\otimes\nabla_{P'}({\mathbb C}))=\{P'\}\cap {\mathcal V}(P)=\varnothing.$$ 
Using \cref{T:grWintersection}, this implies that 
$$W_{\bar{A}}(\gr Y) \cap W_{\bar{A}}(\gr \nabla_{P'}({\mathbb C}))=\varnothing.$$ 
This contradicts the fact that $P'\in W_{\bar{A}}(\gr Y) \cap W_{\bar{A}}(\gr \nabla_{P'}({\mathbb C}))$. Hence, 
$W_{\bar{A}}(\gr Y)={\mathcal V}(P)$.
\end{proof}

\subsection{} The following result will be needed in the verification of Assumption \ref{A:projectivity} in Theorem~\ref{P:projectivityverify}.

\begin{theorem} \label{T:Winclusion} Let $Q\in \operatorname{Stmod}(A)$. Then $W_{A}(Q)\subseteq W_{\bar A}(\gr Q)$. 
\end{theorem}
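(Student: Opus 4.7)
The plan is to combine the realization module of Proposition~\ref{P:realization} with the colocalization functor $\nabla_P$ to pinpoint the prime $P$ in the graded setting. Given $P \in W_A(Q)$, Proposition~\ref{P:realization} provides $Y \in \stmod(A)$ with $W_A(Y) = \mathcal{V}(P) = W_{\bar A}(\gr Y)$. Using Theorem~\ref{T:Lzetatensor}, together with the standard BIK property $W_A(\nabla_P(-)) = \{P\} \cap W_A(-)$, one computes
\[
W_A\bigl(Y \otimes \nabla_P(Q)\bigr) = W_A(Y) \cap W_A(\nabla_P(Q)) = \mathcal{V}(P) \cap \{P\} = \{P\}.
\]
In particular $Y \otimes \nabla_P(Q) \neq 0$ in $\Stmod(A)$, and Proposition~\ref{P:btou}(d) yields that its associated graded is nonzero in $\Stmod(\bar A)$, so $W_{\bar A}(\gr(Y \otimes \nabla_P(Q))) \neq \varnothing$.

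Next, identify $\gr(Y \otimes \nabla_P(Q))$ with $\gr Y \otimes \gr \nabla_P(Q)$ as $\Delta$-modules under the isomorphism $\bar A \cong \Delta$ discussed in \cref{SS:TensorAbarmodules}. Combining the tensor-support inclusion with the containment $W_{\bar A}(\gr \nabla_P(Q)) \subseteq \{P' : P' \subseteq P\}$ supplied by \cref{E:Wgrnablainclusion} produces
\[
\varnothing \neq W_{\bar A}\bigl(\gr Y \otimes \gr \nabla_P(Q)\bigr) \subseteq W_{\bar A}(\gr Y) \cap W_{\bar A}(\gr \nabla_P(Q)) \subseteq \mathcal{V}(P) \cap \{P' : P' \subseteq P\} = \{P\}.
\]
Hence equality holds throughout, and in particular $P \in W_{\bar A}(\gr Y \otimes \gr \nabla_P(Q)) \subseteq W_{\bar A}(\gr \nabla_P(Q))$, where the last inclusion is once again a tensor-support inclusion.

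Finally, by Theorem~\ref{T:NablaAsTensor} we have $\nabla_P(Q) \cong Q \otimes \nabla_P(\C)$ as $A$-modules, so the identification of \cref{SS:TensorAbarmodules} gives $\gr \nabla_P(Q) \cong \gr Q \otimes \gr \nabla_P(\C)$ as $\Delta$-modules. One more application of the tensor-support inclusion then yields
\[
P \in W_{\bar A}(\gr \nabla_P(Q)) \subseteq W_{\bar A}(\gr Q) \cap W_{\bar A}(\gr \nabla_P(\C)) \subseteq W_{\bar A}(\gr Q),
\]
which gives the desired containment. The main technical obstacle is the tensor-support inclusion for $\bar A$-modules tensored via the subalgebra $\Delta \subseteq \bar A \otimes \bar A$: because $\bar A$ is not a Hopf algebra in the usual sense, one must work through the isomorphism $\bar A \cong \Delta$ and verify that $W_{\bar A}$ satisfies the axioms \eqref{E:supportfour}--\eqref{E:supportfive} of a quasi support data for the braided monoidal structure on $\Stmod(\bar A)$ inherited from $\Delta$.
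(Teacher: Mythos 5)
Your proposal is correct and follows essentially the same route as the paper's proof: choose the realization module $Y$ from \cref{P:realization}, use \cref{T:Lzetatensor} and \cref{T:NablaAsTensor} to show $W_{A}(Y\otimes\nabla_{P}(Q))=\{P\}$, pass to the associated graded via \cref{P:btou} to get nonemptiness, squeeze with \cref{E:Wgrnablainclusion} to conclude $P\in W_{\bar A}(\gr\nabla_{P}(Q))$, and finish with the tensor-support inclusion $W_{\bar A}(\gr\nabla_{P}(Q))\subseteq W_{\bar A}(\gr Q)$. The technical point you flag at the end (the inclusion $W_{\bar A}(M\otimes N)\subseteq W_{\bar A}(M)\cap W_{\bar A}(N)$ over $\Delta$) is handled in the paper by the $\bar A$-analogue of \cite[Theorem 5.2]{BIK}, exactly as you anticipate.
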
 

\begin{proof} Let $P\in W_{A}(Q)$. Using \cref{P:realization} choose $Y=\otimes_{j=1}^{t}L_{\zeta_{j}}$ in $\text{stmod}(A)$ such that $W_{A}(Y)={\mathcal V}(P )=W_{\bar{A}}(\gr Y)$. 
First observe that by \cref{T:Lzetatensor,T:NablaAsTensor},
$$W_{A}(\nabla_{P}(Y\otimes Q))=W_{A}(Y\otimes \nabla_{P}(Q))=W_{A}(Y)\cap W_{A}(\nabla_{P}(Q)).$$ 
Since $P\in W_{A}(Q)$ and $W_{A}(Y)={\mathcal V}(P)$, one has 
\begin{equation}
W_{A}(\nabla_{P}(Y\otimes Q))=\{P\}.
\end{equation}

On the other hand, by analyzing the graded module and using \cref{E:Wgrnablainclusion},
\begin{eqnarray*} 
W_{\bar{A}}(\gr \nabla_{P}(Y\otimes Q))&=& W_{\bar{A}}(\gr Y\otimes \nabla_{P}(Q))\\
&\subseteq & W_{\bar{A}}(\gr Y)\cap W_{\bar{A}}(\gr  \nabla_{P}(Q))\\ 
&\subseteq& {\mathcal V}(P)\cap \{P^{\prime}\in X  \mid P^{\prime}\subseteq P \} \\
&=& \{ P \}.
\end{eqnarray*} 
Since $W_{A}(\nabla_{P}(Y\otimes Q))\neq \varnothing$, it follows that $W_{\bar{A}}(\gr \nabla_{P}(Y\otimes Q))\neq \varnothing$ by Proposition~\ref{P:btou} and 
\cite[Theorem 5.2]{BIK}. From the inclusions above, this implies that $W_{\bar{A}}(\gr Y)\cap W_{\bar{A}}(\gr  \nabla_{P}(Q))\linebreak[0]=\{P\}$, thus 
$P\in W_{\bar{A}}(\gr  \nabla_{P}(Q))$. Therefore, $P\in W_{\bar A}(\gr Q)$ because 
\begin{equation*}
W_{\bar{A}}(\gr  \nabla_{P}(Q))\subseteq W_{\bar A}(\gr Q)\cap W_{\bar{A}}(\gr \nabla_{P}({\mathbb C}))\subseteq W_{\bar A}(\gr Q). \qedhere
\end{equation*}
\end{proof}

\section{Classification of Tensor Ideals for Quantum Groups} 

\subsection{Construction of the Support Data}  \label{SS:ConstructionOfSupport}
Let $\bK=\Stmod(U_{\zeta}(\fg))$, $\bK^{c}=\stmod(U_{\zeta}(\fg))$ with $\ell>h$. By work of Ginzburg and Kumar \cite[Main Theorem]{GK}, we have 
$$R:=\HH^{2\bullet}(u_{\zeta}(\fg),\C )\cong \C [\NN]$$ and 
$\HH^{2\bullet +1}(u_{\zeta}(\fg),\C )=0$. Let $X=\Proj(\C [\NN])$,
$X_{G}=G\text{-}\Proj(\C [\NN])$, and let $\rho: X \to X_{G}$ be as in \cref{SS:zariski}. Let  
${\mathcal X}_{cl}$ be the set of all closed sets in $X_{G}$. We will use the cohomology of the small quantum group $u_{\zeta}(\fg)$ to construct a support data 
from $\bK^{c}$ to ${\mathcal X}_{cl}$. 

For any $M\in \mod(u_{\zeta}(\fg))$, set 
$${V}_{u_{\zeta}(\fg)}(M)=\{P\in X\mid \Ext^{\bullet}_{u_{\zeta}(\fg)}(M,M)_{P}\neq 0 \} .$$ 
Let $V:\bK^{c}\rightarrow {\mathcal X}_{cl}$ 
be defined as 
\begin{equation} \label{e:supportdatadef} 
V(M):=\rho \left({V}_{u_{\zeta}(\fg)}(M) \right), 
\quad M\in\bK^{c}.
\end{equation} 
Since $M\in \bK^{c}$ it follows that ${V}_{u_{\zeta}(\fg)}(M)$ is a closed $G$-stable subset of $X$ and so $V(M)\in {\mathcal X}_{cl}$. 
 
We now proceed to verify that $V:\bK^{c}\rightarrow {\mathcal X}_{cl}$ as defined above is a quasi support data in the sense of \cref{SS:supportdata}. For properties 
(\ref{SS:supportdata}.\ref{E:supportone})--(\ref{SS:supportdata}.\ref{E:supportfive}) one can apply the arguments given in 
\cite[Section 5]{PW}. 

By \cite{APW2} the Steinberg module and, hence, any projective/injective $U_{\zeta}(\fg)$-module restricts to a projective/injective module for $u_{\zeta}(\fg )$.  Conversely, the authors of \emph{loc.\  cit.} also prove that any 
projective/injective $u_{\zeta}(\fg )$-module is the restriction of a projective/injective $U_{\zeta}(\fg)$-module.  Consequently, to prove 
a $U_{\zeta}(\fg)$-module is projective it suffices to verify it is projective upon restriction to $u_{\zeta}(\fg )$. 
For the sake of self containment, we have included another proof of this fact below. 

\begin{prop} \label{P:projaxiomverify} Let $M$ be a $U_{\zeta}(\fg )$-module. Then $M$ is projective as a $U_{\zeta}(\fg )$-module if and only if it is projective as as a $u_{\zeta}(\fg)$-module.  
\end{prop}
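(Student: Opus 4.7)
The plan is to handle the two implications separately, with the reverse direction following the template of the proof of Theorem \ref{T:proj1}.

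For the forward direction (projective over $U_\zeta(\fg)$ implies projective over $u_\zeta(\fg)$), the intended argument is structural: since $u_\zeta(\fg)$ is a normal Hopf subalgebra of $U_\zeta(\fg)$, the bigger algebra is faithfully flat (indeed free) as a $u_\zeta(\fg)$-module. This makes coinduction $\Hom_{u_\zeta(\fg)}(U_\zeta(\fg),-)$ exact, from which one concludes in the standard way that restriction of scalars along $u_\zeta(\fg) \hookrightarrow U_\zeta(\fg)$ carries projectives to projectives.

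For the reverse direction, the plan is to invoke the Lyndon--Hochschild--Serre spectral sequence attached to the normal Hopf inclusion $u_\zeta(\fg) \triangleleft U_\zeta(\fg)$, whose quotient is $\mathcal{U}(\fg)$; on integrable Type~1 modules, cohomology of this quotient is computed by rational $G$-cohomology. For any simple $N$ in $\mod(U_\zeta(\fg))$, this yields
\[
E_2^{i,j} = R^i\Ind_{1}^{G}\bigl(\Ext^j_{u_\zeta(\fg)}(M,N)^{(-1)}\bigr) \Rightarrow \Ext^{i+j}_{U_\zeta(\fg)}(M,N),
\]
exactly as in \cite[Theorem 5.1.1]{BNPP}. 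If $M$ is $u_\zeta(\fg)$-projective, then the rows with $j>0$ are zero. The essential input (cleaner in characteristic zero than in positive characteristic) is that $G$ is reductive over $\C$, so every rational $G$-module is semisimple and $R^i\Ind_{1}^{G}(V) = \HH^i(G,V) = 0$ for all $i>0$. Hence the entire $E_2$-page vanishes in positive total degree, giving $\Ext^i_{U_\zeta(\fg)}(M,N) = 0$ for all $i>0$ and all simple $N$ in $\mod(U_\zeta(\fg))$.

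To pass from this Ext-vanishing to genuine projectivity of a possibly infinite-dimensional $M$, the plan is to mimic the endgame of the proof of Theorem \ref{T:proj1}: take a minimal projective resolution $\cdots \to P_1 \to P_0 \to M \to 0$ in $\Mod(U_\zeta(\fg))$, apply dimension shifting, and use \cite[Theorem 2]{BIK} applied in $\Stmod(U_\zeta(\fg))$ (whose compact objects are the finite-dimensional $U_\zeta(\fg)$-modules) to conclude that a sufficiently high syzygy of $M$ is projective. Since projectives in $\Mod(U_\zeta(\fg))$ are also injective, the short exact sequence linking consecutive syzygies splits, and working back down shows $M$ itself is projective. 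The main obstacle is verifying that the LHS spectral sequence remains valid in the integrable Type~1 setting for possibly infinite-dimensional $M$; this is routine given the standard framework of \cite{APW, BNPP}.
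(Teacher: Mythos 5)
Your proof is correct and follows essentially the same route as the paper: the reverse implication is exactly the Lyndon--Hochschild--Serre spectral sequence for $u_{\zeta}(\fg)\lhd U_{\zeta}(\fg)$ collapsing because rational $G$-modules over $\C$ are completely reducible, and the forward implication is the standard freeness argument. The only difference is that the paper runs the spectral sequence against an \emph{arbitrary} $N\in\Mod(U_{\zeta}(\fg))$ rather than only simple $N$, so that $\Ext^{n}_{U_{\zeta}(\fg)}(M,N)=0$ for all $n>0$ and all $N$ yields projectivity of $M$ immediately, making your syzygy/dimension-shifting endgame via \cite[Theorem 2]{BIK} (and the attendant worry about infinite-dimensional $M$) unnecessary.
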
 

\begin{proof} If $M$ is projective in $\Mod(U_{\zeta}(\fg))$ then, as discussed above, it is projective when restricted to $u_{\zeta}(\fg)$.

On the other hand, suppose that $M\in \Mod(U_{\zeta}(\fg))$ is projective as a $u_{\zeta}(\fg )$-module.  Let $N$ be an arbitrary module in $\Mod(U_{\zeta}(\fg))$. Consider the Lyndon-Hochschild-Serre spectral sequence 
\begin{equation} 
E_{2}^{i,j}=\Ext^{i}_{G}(\C ,\Ext^{j}_{u_{\zeta}(\fg)}(M,N))\Rightarrow\Ext^{i+j}_{U_{\zeta}(\fg)}(M, N). 
\end{equation} 
Since $M$ is projective over $u_{\zeta}(\fg)$ it follows that $E_{2}^{i,j}=0$ for $j>0$. On the 
other hand, $E_{2}^{i,j}=0$ for all $i>0$ because all rational $G$-modules are completely reducible. Taken together we have that $E_{2}^{i,j}=0$ whenever $i+j>0$.  This immediately implies that
$\Ext^{n}_{U_{\zeta}(\fg)}(M, N)=0$ for all $n>0$. That is, $M$ is projective in $\Mod (U_{\zeta}(\fg ))$.  
\end{proof} 

Now let $M \in \bK^{c}$.  Note that $V(M) =\varnothing$ if and only if $V_{u_{\zeta}(\fg )}(M)=\varnothing$, and that $V_{u_{\zeta}(\fg )}$ satisfies  (\ref{SS:supportdata}.\ref{E:supportseven}).  Combined with the previous result this justifies (\ref{SS:supportdata}.\ref{E:supportseven}) for $V$.  That is, the following result holds true. 

\begin{cor}\label{C:projaxiomverify}
 Let $M\in \bK^{c}$. Then $V(M)=\varnothing$ if and only if $M=0$. 
\end{cor}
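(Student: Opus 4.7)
The plan is to chain together three equivalences, each of which is already available.

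First, I would unpack the definition $V(M) = \rho(V_{u_{\zeta}(\fg)}(M))$. Since $\rho\colon X \to X_G$ is the surjection $P \mapsto \cap_g gP$ from \cref{SS:zariski}, it sends the empty set to the empty set and any nonempty $G$-stable subset of $X$ to a nonempty subset of $X_G$. Thus $V(M) = \varnothing$ if and only if $V_{u_{\zeta}(\fg)}(M) = \varnothing$, which is exactly the observation flagged in the paragraph preceding the corollary.

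Next, I would appeal to the standard support variety theory for the small quantum group, which is applicable because $R = \operatorname{H}^{2\bullet}(u_{\zeta}(\fg),\C) \cong \C[\NN]$ is a finitely generated commutative ring (Ginzburg--Kumar) and $M \in \stmod(U_{\zeta}(\fg))$ is finite-dimensional, hence compact as an object of $\stmod(u_{\zeta}(\fg))$. In this setting $V_{u_{\zeta}(\fg)}$ satisfies property (\ref{SS:supportdata}.\ref{E:supportseven}) by construction: $V_{u_{\zeta}(\fg)}(M) = \varnothing$ if and only if $\operatorname{Ext}_{u_{\zeta}(\fg)}^{\bullet}(M,M)$ is a torsion module over $R$ with support contained in the irrelevant ideal, which in turn happens precisely when $M$ is projective as a $u_{\zeta}(\fg)$-module.

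Finally, I would invoke \cref{P:projaxiomverify} to lift projectivity: $M$ is projective over $u_{\zeta}(\fg)$ if and only if $M$ is projective over $U_{\zeta}(\fg)$. Since the stable category $\bK^{c} = \stmod(U_{\zeta}(\fg))$ is formed by quotienting out morphisms that factor through a projective, an object $M$ is isomorphic to $0$ in $\bK^{c}$ precisely when $M$ is projective in $\mod(U_{\zeta}(\fg))$. Stringing the equivalences together gives $V(M) = \varnothing \Leftrightarrow V_{u_{\zeta}(\fg)}(M) = \varnothing \Leftrightarrow M$ projective over $u_{\zeta}(\fg) \Leftrightarrow M$ projective over $U_{\zeta}(\fg) \Leftrightarrow M = 0$ in $\bK^{c}$, which is the claim.

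No step here is truly an obstacle, since the hard work has been front-loaded into \cref{P:projaxiomverify} (which handles the subtle descent from $U_{\zeta}(\fg)$ to $u_{\zeta}(\fg)$ via the Lyndon--Hochschild--Serre spectral sequence and semisimplicity of rational $G$-modules) and into the Ginzburg--Kumar identification of the cohomology of $u_{\zeta}(\fg)$. The only point that deserves brief justification in the written proof is the first equivalence, namely that $\rho$ reflects emptiness; everything else is a formal assembly.
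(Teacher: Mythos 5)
Your proof is correct and follows essentially the same route as the paper: observe that $\rho$ reflects emptiness so $V(M)=\varnothing$ iff $V_{u_{\zeta}(\fg)}(M)=\varnothing$, invoke the standard fact that the cohomological support over $u_{\zeta}(\fg)$ vanishes exactly for projective modules, and then use \cref{P:projaxiomverify} to transfer projectivity from $u_{\zeta}(\fg)$ to $U_{\zeta}(\fg)$, which is the meaning of $M=0$ in $\bK^{c}$. The paper compresses this into the paragraph preceding the corollary; your write-up merely makes the same chain of equivalences explicit.
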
 

Friedlander \cite{F} defined the notion of mock injectivity for algebraic group schemes over fields of prime characteristic. Hardesty, Nakano and Sobaje \cite{HNS} proved that there are pure mock injective modules for reductive groups (i.e., mock injective modules that are not injective). It is interesting to note that Proposition~\ref{P:projaxiomverify} demonstrates that there are no pure mock injective modules for $U_{\zeta}({\mathfrak g})$. 

\subsection{Realization}
Let $R=\text{H}^{2\bullet}(u_{\zeta}({\mathfrak g}),{\mathbb C})\cong \C[\NN]$. For $M_{1}, M_{2}$ finite-dimensional $u_{\zeta}({\mathfrak g})$-modules, 
let 
$$V^{\max}(M_{1},M_{2})=\text{Maxspec}(R/J(M_{1},M_{2}))$$ 
where $J(M_{1},M_{2})$ is the annihilator ideal of the action of $R$ on $\text{Ext}^{\bullet}_{u_{\zeta}({\mathfrak g})}(M_{1},M_{2})$. The action is obtained by 
taking an extension class in $R=\text{Ext}^{2\bullet}_{u_{\zeta}({\mathfrak g})}({\mathbb C},{\mathbb C})$ and tensoring by $M_{2}$, then splicing this to an extension class in $\text{Ext}^{\bullet}_{u_{\zeta}({\mathfrak g})}(M_{1},M_{2})$. Set $V^{\max}(M_{1})=V^{\max}(M_{1},M_{1})$. 

\begin{prop} \label{P:orbitrealization}
Let ${\OO} \subset \NN$ be a nilpotent $G$-orbit. Then there is a finite-dimensional $U_{\zeta}({\mathfrak g})$-module $M$ such that $V^{\max}(M)=\overline\OO$.
\end{prop}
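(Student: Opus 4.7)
The plan is to realize each orbit closure by a suitable tilting module and invoke Bezrukavnikov's theorem. By Lusztig's bijection between two-sided cells of the affine Weyl group $W_{\ell}$ and nilpotent $G$-orbits in $\NN$, the given orbit $\OO$ corresponds to a unique two-sided cell $\mathbf{c}$. The goal is to produce a finite-dimensional $U_{\zeta}(\fg)$-module whose $V^{\max}$ is $\overline{\OO}$; the natural candidates are the indecomposable tilting modules $T(\lambda)$, because Ostrik's classification already shows that they detect cells (and hence orbits) at the level of thick tensor ideals.

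I would choose a regular dominant weight $\lambda \in X_{+}$ whose attached two-sided cell under the dot action of $W_{\ell}$ equals $\mathbf{c}$, and set $M = T(\lambda)$. Regularity is used to ensure that $\lambda$ lies in the interior of its facet, so that translation functors to and from walls are well-behaved and $T(\lambda)$ sits in a regular block. By Bezrukavnikov's theorem (the relative-support-variety version of Humphreys' conjecture), the orbit $\overline{\OO} = \overline{\OO_{\mathbf{c}(\lambda)}}$ arises as the relative support variety of $T(\lambda)$. Since relative supports are $G$-stable and contained in $\NN$, they are unions of orbit closures, and Bezrukavnikov pins down the maximal one to be $\overline{\OO}$.

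The remaining task, and the main obstacle, is to upgrade Bezrukavnikov's relative statement to the absolute statement $V^{\max}(T(\lambda)) = \overline{\OO}$. The containment $V^{\max}(T(\lambda)) \supseteq \overline{\OO}$ should follow because any ideal annihilating the $R$-action on $\Ext^{\bullet}_{u_{\zeta}(\fg)}(T(\lambda),T(\lambda))$ also annihilates the more refined relative Ext, so the relative support embeds into the absolute one. The reverse inclusion is the delicate point: one must show that $V^{\max}(T(\lambda))$ does not pick up any additional orbits outside $\overline{\OO}$. Here regularity of $\lambda$ plays the central role: in a regular block, the block of $T(\lambda)$ is equivalent (via translation) to any other regular block in its linkage class, and Bezrukavnikov's argument can be pushed through without the equivariance data because no wall-crossing phenomena introduce extra support. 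Concretely I would argue by contradiction: if $V^{\max}(T(\lambda))$ strictly contained some additional orbit closure $\overline{\OO'}$, then tensoring $T(\lambda)$ with a Carlson-type object realizing $\overline{\OO'}$ and using \cref{T:Lzetatensor}-style tensor product properties would produce a nonzero extension witness in Ext groups that Bezrukavnikov's theorem already forbids. The regularity hypothesis guarantees the ambient block admits such Carlson modules in a controllable way.

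Finally, since any $G$-stable closed subset of $\NN$ is a finite union of orbit closures, combining the construction above for each orbit component and taking a direct sum (using \hyperref[SS:supportdata]{axiom (\ref{SS:supportdata}.\ref{E:supporttwo})}) would give the full Realization Property (\ref{SS:supportdata}.\ref{E:supporteight}) needed to apply \cref{I:bijectiongeneral} and \cref{K:bijectiongeneral}. The proposition itself is just the single-orbit case, so only the regular tilting module $T(\lambda)$ is needed for its statement.
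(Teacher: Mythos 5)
Your overall strategy --- take Bezrukavnikov's tilting module realizing $\overline{\OO}$ as a relative support variety and upgrade the relative statement to the absolute one --- is exactly the paper's, and your easy direction ($\overline{\OO}=V^{\max}(\C,T)\subseteq V^{\max}(T)$, via the containment of annihilators) is fine. But you have not supplied the hard direction, and the argument you sketch for it would not work. Since $V^{\max}(T)=\bigcup_{\delta\in X_{1}}V^{\max}(L(\delta),T)$, what must be shown is that $V^{\max}(L(\delta),T(w\cdot 0))\subseteq V^{\max}(\C,T(w\cdot 0))$ for \emph{every} simple $L(\delta)$; Bezrukavnikov's theorem only controls the $\delta=0$ term. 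The paper achieves this by a genuine induction on dominant weights: it first proves the containment for the induced modules $H^{0}(\delta)$ using wall-crossing functors $T^{0}_{\mu}T_{0}^{\mu}$ together with the facts that relative supports decrease under translation functors and under tensoring with finite-dimensional modules, then passes to $L(\delta)$ via $0\to L(\delta)\to H^{0}(\delta)\to Q\to 0$ and a second induction, and finally uses the Steinberg tensor product theorem and linkage to cover all $\delta\in X_{1}$. None of this inductive chain appears in your sketch; asserting that ``no wall-crossing phenomena introduce extra support'' is precisely the thing that has to be proved.

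Your proposed substitute --- a contradiction obtained by tensoring with a Carlson-type object realizing $\overline{\OO'}$ and invoking \cref{T:Lzetatensor}-style tensor product properties --- is both inapplicable and circular. \cref{T:Lzetatensor} concerns the supports $W_{A}$ for $A=u_{\zeta}(\fb)$, not $V^{\max}$ for $u_{\zeta}(\fg)$-modules; the tensor product property for the small quantum group $u_{\zeta}(\fg)$ is explicitly stated in the introduction to be open; and the tensor product property that \emph{is} proved for $U_{\zeta}(\fg)$-modules (\cref{P:TensorProductPropertyforUzeta}) is a downstream consequence of the classification theorem, which requires the Realization Property and hence this very proposition. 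Using it here would beg the question. A correct proof must control $\Ext^{\bullet}_{u_{\zeta}(\fg)}(L(\delta),T(w\cdot 0))$ for all simples directly, which is what the paper's translation-functor induction accomplishes.
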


\begin{proof}
Bezrukavnikov \cite{Bez} proved that there exists a $U_{\zeta}(\fg)$-tilting module $T(w\cdot 0)$ (in the principal block) such that 
$V^{\max}({\mathbb C},T(w\cdot 0))=\overline{\OO}$. We will show that $V^{\max}(T(w\cdot 0))=\overline{\OO}$.

Since 
\begin{equation} \label{E:RelVsAbsSupport}
V^{\max}(T(w\cdot 0)) = V^{\max}(\bigoplus_{\delta\in X_{1}}L(\delta),T(w\cdot 0)) = \bigcup_{\d\in X_{1}} V^{\max}(L(\delta),T(w\cdot 0)), 
\end{equation}
it follows, since $L(0)=\C$, that 
\begin{equation} \label{E:oneinclusion}
\overline{\mathcal O}\subseteq V^{\max}(T(w\cdot 0)).
\end{equation}

To show the reverse inclusion, we will use induced modules and translation functors. Let $C_{\Z}$ be the bottom alcove in $W_{\ell}$. Since $\ell>h$ one has $0\in C_{\Z}$. For $\lambda, \mu\in \overline{C}_{\mathbb Z}$ (closure of the 
bottom alcove), one can define the translation functor $T^{\lambda}_{\mu}$ \cite[II 7.6]{rags}. Since the translation functor is given by the composition of  tensoring by a finite-dimensional $u_{\zeta}({\mathfrak g})$-module followed by projection onto a summand, one has 
$$V^{\max}(T^{\lambda}_{\mu}(M_{1}), T^{\lambda}_{\mu}(M_{2}))\subseteq V^{\max}(M_{1},M_{2}).$$  Also, using the $R$-action described above, one can show that if $N$ is a finite-dimensional $u_{\zeta}({\mathfrak g})$-module then 
\begin{equation} \label{E:tensordecreasesrelativesupport}
V^{\max}(M_{1}\otimes N,M_{2}\otimes N)\subseteq V^{\max}(M_{1},M_{2}).
\end{equation}

We claim that  $V^{\max}(L(\delta),T(w\cdot 0))\subseteq V^{\max}({\mathbb C},T(w\cdot 0))$ for all $\delta\in X_{+}$. 
First observe that 
\begin{equation} \label{E:H0support}
V^{\max}(H^{0}(\delta),T(w\cdot 0)\subseteq V^{\max}({\mathbb C},T(w\cdot 0))
\end{equation} 
for all $\delta\in W_{\ell}\cdot 0 \cap X_{+}$. This is proved by induction on $\delta$. 
For $\delta=0$, this holds because $H^{0}(0)\cong {\mathbb C}$. Suppose  \cref{E:H0support} holds for 
all dominant weights $\delta_{1}<\delta$. According to \cite{rags}, there exists a codimension one facet and a dominant weight $\delta_{1}<\d$ which is a reflection 
of $\delta$ across this facet \cite[II 6.8 Proposition]{rags}. In fact, there is a sequence of affine reflections from $\delta$ to $0\in C_{\mathbb Z}$ such that at each stage the weight 
decreases and remains in the dominant chamber. 

Consider the composition of the translation functors $T^{0}_{\mu}T_{0}^{\mu}$ corresponding to the wall crossing 
functor. Here $\mu$ will be on the wall in the bottom alcove obtained after applying the aforementioned sequence of affine reflections. 
Then $T^{0}_{\mu}T_{0}^{\mu}(H^{0}(\delta_{1}))$ has a good filtration with two composition factors $H^{0}(\delta_{1})$ and 
$H^{0}(\delta)$ \cite[II 7.13 Proposition]{rags}. In fact, there is a short exact sequence, 
$$0\rightarrow H^{0}(\delta_{1}) \rightarrow T^{0}_{\mu}T_{0}^{\mu}(H^{0}(\delta_{1})) \rightarrow H^{0}(\delta) \rightarrow 0.$$ 
Observe using the definition of translation functor, one has 
\begin{eqnarray*} 
V^{\max}(T^{0}_{\mu}T_{0}^{\mu}(H^{0}(\delta_{1})),T(w\cdot 0))&\subseteq & 
V^{\max}(T_{0}^{\mu}(H^{0}(\delta_{1}))\otimes L(\nu),T(w\cdot 0))\\
&\subseteq & V^{\max}(T_{0}^{\mu}(H^{0}(\delta_{1})),T(w\cdot 0)\otimes L(-w_{0}\nu))\\
&\subseteq &V^{\max}(H^{0}(\delta_{1})\otimes L(-w_{0}\nu), T(w\cdot 0)\otimes L(-w_{0}\nu) )\\
&\subseteq &V^{\max}(H^{0}(\delta_{1}),T(w\cdot 0)).
\end{eqnarray*} 
Here $\nu\in W(-\mu) \cap  X_{+}$, and we used \cref{E:tensordecreasesrelativesupport} for the last inclusion. The short exact sequence, the inclusion above, and the induction hypothesis can be now used to show that 
\begin{eqnarray*}
V^{\max}(H^{0}(\delta),T(w\cdot 0)) &\subseteq& V^{\max}(H^{0}(\delta_{1}),T(w\cdot 0)) \cup V^{\max}(T^{0}_{\mu}T_{0}^{\mu}(H^{0}(\delta_{1})),T(w\cdot 0)) \\
&\subseteq& V^{\max}(H^{0}(\delta_{1}),T(w\cdot 0)) \\
&\subseteq & V^{\max}({\mathbb C},T(w\cdot 0)).   
\end{eqnarray*} 

Next, we can again use induction to show that $V^{\max}(L(\delta),T(w\cdot 0))\subseteq V^{\max}({\mathbb C},T(w\cdot 0))$ for all $\delta\in W_{\ell}\cdot 0\cap X_{+}$. 
For $\delta=0$ this holds trivially. So assume it holds for all $\delta_{1}<\delta$ with $\delta_{1}\in W_{\ell}\cdot 0\cap X_{+}$. One has a short exact sequence 
$$0\rightarrow L(\delta)\rightarrow H^{0}(\delta)\rightarrow Q \rightarrow 0$$ 
where all composition factors $L(\delta_{1})$ of $Q$ have the property that $\delta_{1}<\delta$. 
Hence, by applying the induction hypothesis and \cref{E:H0support},  
\begin{eqnarray*}
V^{\max}(L(\delta),T(w\cdot 0)) &\subseteq& V^{\max}(H^{0}(\delta),T(w\cdot 0)) \cup V^{\max}(Q,T(w\cdot 0)) \\
&\subseteq&V^{\max}({\mathbb C},T(w\cdot 0)).   
\end{eqnarray*}

Finally, note that, using the Steinberg tensor product theorem, one can show that any simple module $L(\d)$ in the principal block, where $\d\in X_{1}$, can be realized as a 
$u_{\zeta}({\mathfrak g})$ direct summand of some $L(\d')$ where $\d'\in W_{\ell}\cdot 0\cap X_{+}$. Moreover, by linkage all summands of $T(w\cdot 0)$ upon 
restriction of $u_{\zeta}({\mathfrak g})$ are in the principal block. Consequently, 
$V^{\max}(L(\delta),T(w\cdot 0))\subseteq V^{\max}({\mathbb C},T(w\cdot 0))$ for all $\delta\in X_{+}$. 

Combining this inclusion with \cref{E:RelVsAbsSupport} and \cref{E:oneinclusion}, we have
$V^{\text{max}}(T(w\cdot 0))=\overline{\mathcal O}$. 
\end{proof}

We have $V^{\max}(M)=V(M)\cap \Proj(\text{Maxspec}(\C[\NN]))$. Now the realization property 
(\ref{SS:supportdata}.\ref{E:supporteight}) holds by the argument in \cite[Section 2.4]{BKN}. We also remark that the calculation of $V^{\text{max}}(T(w\cdot 0))$ given in the proof above can be combined with the description of the tensor ideals in the full subcategory of tilting modules (see \cref{S:tilting} and \cite{Ost}) to compute the support variety of any tilting module.  

\subsection{Naturality} Let $R:=\HH^{2\bullet}(u_{\zeta}(\fg),\C )$ and $S=\HH^{2\bullet}(u_{\zeta}(\fb),\C )$ with 
restriction map $\pi:R\rightarrow S$. When $\ell>h$, $R\cong \C[\NN]$, $S\cong S^{\bullet}(\fu^{*})$, $\fu\subseteq \NN$ and $\pi$ is realized by the restriction of functions. 
Therefore, in this case $\pi$ is surjective, and the induced map $\pi^{*}:\Proj(S)\hookrightarrow \Proj( R)$ is a closed injection. For $M\in \mod(u_{\zeta}({\mathfrak b}))$, let 
$${V}_{u_{\zeta}({\mathfrak b})}(M)=\{P\in \Proj (S) \mid \Ext^{\bullet}_{u_{\zeta}({\mathfrak b})}(M,M)_{P}\neq 0 \} .$$ 

Let $J\subseteq \Pi$ and ${\mathfrak p}_{J}={\mathfrak l}_{J}\oplus {\mathfrak u}_{J}$ be the corresponding parabolic subalgebra with ${\mathfrak u}_{J}$ consisting of 
negative root vectors. Let ${\mathfrak a}_{J}={\mathfrak t}\oplus {\mathfrak u}_{J}$. As in the case with ${\mathfrak b}$, for ${\ell}>h$, 
$\HH^{2\bullet}(u_{\zeta}({\mathfrak a}_{J}),{\mathbb C})\cong S^{\bullet}({\mathfrak u}_{J}^{*})$ and $\HH^{2\bullet}(u_{\zeta}({\mathfrak p}_{J}),{\mathbb C})\cong 
{\mathbb C}[P_{J}\times_{B} {\mathfrak u}]$.  Furthermore, one can define support varieties for modules over 
$u_{\zeta}({\mathfrak a}_{J})$ and $u_{\zeta}({\mathfrak p}_{J})$.  

If $M\in \stmod(u_{\zeta}(\fg))$ then under $\pi^{*}$ we will consider $V_{u_{\zeta}(\fb)}(M)$ as a closed set in $\Proj(R)$. Similarly we view
 $V_{u_{\zeta}({\mathfrak p}_{J})}(M)$ as a closed set in $\Proj(R)$.  The restriction maps
\begin{equation*}
\text{res}: \HH^{2\bullet}(u_{\zeta}({\mathfrak p}_{J}),{\mathbb C})\rightarrow \HH^{2\bullet}(u_{\zeta}({\mathfrak a}_{J}),{\mathbb C}) 
\end{equation*} and
\begin{equation*}
\text{res}: \HH^{2\bullet}(u_{\zeta}({\mathfrak p}_{J}),{\mathbb C})\rightarrow \HH^{2\bullet}(u_{\zeta}(\fb),{\mathbb C})
\end{equation*}
 are surjective because they are given by restriction of functions from ${\mathbb C}[P_{J}\times_{B}{\mathfrak u}]
\rightarrow {\mathbb C}[{\mathfrak u}_{J}]$ and ${\mathbb C}[P_{J}\times_{B}{\mathfrak u}]
\rightarrow {\mathbb C}[\fu]$, respectively. Consequently for $N\in  \stmod(u_{\zeta}({\mathfrak p}_{J}))$ restriction again defines inclusions of 
$V_{u_{\zeta}({\mathfrak a}_{J})}(N)$ and $V_{u_{\zeta}({\mathfrak b})}(N)$ in $\Proj({\mathbb C}[P_{J}\times_{B}{\mathfrak u}])$.

One can further refine these inclusions with the 
following result. 

\begin{prop} \label{P:inclusions} Let $M\in \operatorname{stmod}(u_{\zeta}(\fg))$, 
$N\in \operatorname{stmod}(u_{\zeta}({\mathfrak p}_{J}))$. Then one has the following inclusions of support varieties. 
\begin{itemize} 
\item[(a)] $V_{u_{\zeta}(\fb)}(M)\subseteq V_{u_{\zeta}(\fg)}(M)\cap \Proj(S)$;  
\item[(b)] $V_{u_{\zeta}({\mathfrak p}_{J})}(M)\subseteq V_{u_{\zeta}({\mathfrak g})}(M)\cap  \Proj({\mathbb C}[P_{J}\times_{B}{\mathfrak u}])$;  
\item[(c)] $V_{u_{\zeta}({\mathfrak a}_{J})}(N)\subseteq V_{u_{\zeta}({\mathfrak p}_{J})}(N)\cap \Proj(S^{\bullet}({\mathfrak u}_{J}^{*}))$;   
\item[(d)] $V_{u_{\zeta}({\mathfrak b})}(N)\subseteq V_{u_{\zeta}({\mathfrak p}_{J})}(N)\cap \Proj(S)$.   
\end{itemize} 

\end{prop} 

\begin{proof} (a) First consider the commutative diagram:
$$
\CD
R = \HH^{2\bullet}(u_{\zeta}(\fg),\C )  @>>>
\Ext^{\bullet}_{u_{\zeta}(\fg)}(M,M)\\
@VV\pi  V  @VV \sigma V \\
S = \HH^{2\bullet}(u_{\zeta}(\fb),\C )@>>>
\Ext^{\bullet}_{u_{\zeta}(\fb)}(M,M),
\endCD
$$
where each horizontal map is given by the action of the ring on the identity morphism, and each vertical map is restriction.
Let $J_\fg(M)$ be the annihilator of $R$ on $\Ext^{\bullet}_{u_{\zeta}(\fg)}(M,M)$, and 
$J_\fb(M)$ be the annihilator of $S$ on $\Ext^{\bullet}_{u_{\zeta}(\fb)}(M,M)$. The commutativity of the 
diagram implies that $\pi(J_\fg(M))\subseteq J_\fb(M)$. Therefore, 
$J_\fg(M)\subseteq \pi^{-1}(J_\fb(M))$. Now if $J_\fb(M)\subseteq P$ where $P\in \Proj(S)$ then 
$J_\fg(M)\subseteq \pi^{-1}(P) = \pi^{*}(P)$. This shows that $V_{u_{\zeta}(\fb)}(M)\subseteq V_{u_{\zeta}(\fg)}(M)\cap \Proj(S)$.  
The cases (b)--(d) are proved in a similar manner. 
\end{proof}

The following proposition is our first result which shows  support varieties behave well with respect to restriction to certain quantum 
subalgebras. 

\begin{prop} \label{P:naturalityp} Let ${\mathfrak p}_{J}$ be a parabolic subalgebra in ${\mathfrak g}$ containing ${\mathfrak b}$, and let $N\in \operatorname{stmod}(u_{\zeta}({\mathfrak p}_{J}))$. Then 
$$V_{u_{\zeta}({\mathfrak a}_{J})}(N)=V_{u_{\zeta}({\mathfrak p}_{J})}(N)\cap  \Proj(S^{\bullet}({\mathfrak u}_{J}^{*})).$$ 
\end{prop} 

\begin{proof} From \cref{P:inclusions}(c), $V_{u_{\zeta}({\mathfrak a}_{J})}(N)\subseteq V_{u_{\zeta}(\fp_{J})}(N)\cap  \Proj(S^{\bullet}({\mathfrak u}_{J}^{*}))$.

We next prove the other inclusion.  For $N\in \operatorname{stmod}(u_{\zeta}({\mathfrak p}_{J}))$ one has a  Lyndon-Hochschild-Serre spectral sequence: 
\begin{equation*} 
E^{i,j}_{2}(N)=\HH^{i}(u_{\zeta}({\mathfrak l}_{J}),\HH^{j}(u_{\zeta}({\mathfrak u}_{J}),N^{*}\otimes N))\Rightarrow  \HH^{i}(u_{\zeta}({\mathfrak p}_{J}),N^{*}\otimes N)
\end{equation*} 
with an action of $E^{\bullet,\bullet}_{2}({\mathbb C})$ on $E^{\bullet,\bullet}_{2}(N)$. Furthermore, the (vertical) edge homomorphism for $N=\C$ can be realized as the restriction map: 
$$\HH^{\bullet}(u_{\zeta}({\mathfrak p}_{J}),{\mathbb C})\rightarrow E_{2}^{0,\bullet}({\mathbb C})=\HH^{\bullet}(u_{\zeta}({\mathfrak u}_{J}),{\mathbb C})^{u_{\zeta}({\mathfrak l}_{J})}.$$ 
From the discussion in \cite[Section 5.6]{BNPP} this edge homomorphism factors through the restriction map  $\res : \HH^{2\bullet}(u_{\zeta}({\mathfrak p}_{J}),{\mathbb C}) \to \HH^{2\bullet}(u_{\zeta}({\mathfrak a}_{J}),{\mathbb C})$ and so $\HH^{2\bullet}(u_{\zeta}({\mathfrak a}_{J}),{\mathbb C})$ acts on the pages of the spectral sequence $E^{i,\bullet}_{r}(N)$. 

Next observe  
\begin{equation}\label{E:sumsum}
\text{Ext}^{\bullet}_{u_{\zeta}({\mathfrak a}_{J})}(\oplus \lambda,N^{*}\otimes N)\cong \text{Ext}^{\bullet}_{u_{\zeta}({\mathfrak u}_{J})}({\mathbb C},N^{*}\otimes N), 
\end{equation}
where $\oplus \lambda$ is the direct sum of the simple $u_{\zeta}({\mathfrak a}_{J})$-modules. Let $P\in  \Proj(S^{\bullet}({\mathfrak u}_{J}^{*}))$ with $P\notin V_{u_{\zeta}({\mathfrak a}_{J})}(N)$. 
Then from \cref{E:sumsum} we have
$$E^{i,\bullet}_{2}(N)_{P}=\HH^{i}(u_{\zeta}({\mathfrak l}_{J}),\HH^{\bullet}(u_{\zeta}({\mathfrak u}_{J}),N^{*}\otimes N)_{P})=0$$ 
for all $i\geq 0$.
Consequently, $\HH^{\bullet}(u_{\zeta}({\mathfrak p}_{J}),N^{*}\otimes N)_{P}=0$. This proves $V_{u_{\zeta}({\mathfrak a}_{J})}(N)\supseteq V_{u_{\zeta}({\mathfrak p})}(N)\cap  \Proj(S^{\bullet}({\mathfrak u}_{J}^{*}))$.
\end{proof} 

Next we claim $P\in \pi^{-1}(\text{Spec}(S))\subseteq \text{Spec}(R)$ if and only if $Z(P)\subseteq {\mathfrak u}$. Here $Z(P)$ 
is the zero locus of $P$. To see this, suppose that $P=\pi^{-1}(P^{\prime})\supseteq \pi^{-1}(\{0\})$. Then $Z(P)\subseteq Z(\pi^{-1}(\{0\}))={\mathfrak u}$. Conversely, let $Z(P)\subseteq {\mathfrak u}$. 
Since $P$ is prime $Z(P)$ is irreducible and there exists a prime ideal $P^{\prime}\in \text{Spec}(S)$ with 
$Z(P^{\prime})=Z(\pi^{-1}(P^{\prime}))=Z(P)$. Now since $R$ is reduced, prime ideals are equal to their radicals and 
so $P=\pi^{-1}(P^{\prime})$. The same proof also shows that $P\in \pi^{-1}(\text{Spec}(S^{\bullet}({\mathfrak u}_{J}^{*}))\subseteq \text{Spec}(S)$ if and only if 
$Z(P)\subseteq {\mathfrak u}_{J}$.

With our identifications, we have the following result involving the naturality of support varieties. 

\begin{theorem} \label{T:naturality} Let $M\in \operatorname{stmod}(U_{\zeta}(\fg))$. Then 
$$V_{u_{\zeta}(\fb)}(M)=V_{u_{\zeta}(\fg)}(M)\cap  \Proj(S).$$ 
\end{theorem}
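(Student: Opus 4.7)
The proof proceeds by establishing the two inclusions separately, where the intersection $V_{u_\zeta(\fb)}(M) \cap V_{u_\zeta(\fg)}(M) \cap \Proj(S)$ is understood via the closed immersion $\pi^{*}: \Proj(S) \hookrightarrow \Proj(R)$.

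For the inclusion $V_{u_\zeta(\fb)}(M) \subseteq V_{u_\zeta(\fg)}(M) \cap \Proj(S)$, the plan is to invoke Frobenius reciprocity. The triangular decomposition $u_\zeta(\fg) = u_\zeta(\fu^{-}) \cdot u_\zeta(\fb)$ makes $u_\zeta(\fg)$ free as a one-sided $u_\zeta(\fb)$-module, so $\Coind^{u_\zeta(\fg)}_{u_\zeta(\fb)}$ is exact and preserves finite-dimensionality. This yields an $R$-linear isomorphism
\[
\Ext^{\bullet}_{u_\zeta(\fb)}(M, M) \cong \Ext^{\bullet}_{u_\zeta(\fg)}(M, \Coind^{u_\zeta(\fg)}_{u_\zeta(\fb)}(M)),
\]
where $R$ acts on the left-hand side via $\pi: R \to S$ and on the right-hand side via the Yoneda product. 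Because the coinduced module is finite-dimensional, the right-hand side is finitely generated over the Noetherian ring $\Ext^{\bullet}_{u_\zeta(\fg)}(M,M)$, so its support in $\Proj(R)$ lies inside $V_{u_\zeta(\fg)}(M)$. Since the $R$-action on the left-hand side factors through $\pi$, its support is automatically contained in $\pi^{*}(\Proj(S))$. Combining these observations gives the claimed inclusion.

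For the reverse inclusion $V_{u_\zeta(\fg)}(M) \cap \Proj(S) \subseteq V_{u_\zeta(\fb)}(M)$ the plan makes essential use of the $U_\zeta(\fg)$-structure on $M$ (not merely the $u_\zeta(\fg)$-structure), which ensures that $V_{u_\zeta(\fg)}(M)$ is $G$-stable in $\Proj(R) \cong \Proj(\C[\NN])$ and that the relevant $\Ext$ groups carry rational $G$-actions. I would apply the spectral sequence of \cite[Theorem 5.1.1]{BNPP} with $P_{J}=B$:
\[
E_{2}^{i,j} = R^{i}\Ind_{B}^{G}\Ext^{j}_{u_\zeta(\fb)}(M, M)^{(-1)} \Longrightarrow \Ext^{i+j}_{u_\zeta(\fg)}(M, M)^{(-1)},
\]
viewed as a spectral sequence of $R$-modules. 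Arguing contrapositively, assume $P \in \Proj(S)$ satisfies $P \notin V_{u_\zeta(\fb)}(M)$, so that each $\Ext^{j}_{u_\zeta(\fb)}(M,M)$ vanishes after localization at $P$. Geometrically, $\Ind_{B}^{G}$ realizes pushforward along the Springer resolution $\mu : G \times^{B} \fu \to \NN$, and $\pi^{*}: \Proj(S) \hookrightarrow \Proj(R)$ matches the closed immersion $\fu \hookrightarrow \NN$. Consequently $R^{i}\Ind_{B}^{G}$ applied to $\Ext^{j}_{u_\zeta(\fb)}(M,M)^{(-1)}$ also vanishes at $\pi^{*}(P)$ after localization as an $R$-module; convergence of the spectral sequence then forces $\Ext^{\bullet}_{u_\zeta(\fg)}(M,M)$ to vanish at $\pi^{*}(P)$, contradicting $P \in V_{u_\zeta(\fg)}(M) \cap \Proj(S)$.

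The principal obstacle in this approach is making the last step rigorous, namely verifying that the $R$-action on $R^{i}\Ind_{B}^{G}N$ (for a $B$-module $N$ carrying a compatible $\pi$-linear $R$-action) is compatible with localization at primes in $\Proj(S)$ viewed inside $\Proj(R)$. This compatibility relies on the identifications $R \cong \C[\NN]$ and $S \cong \C[\fu]$ from \cite{GK} (valid because $\ell > h$), the $G$-equivariance of $R$ arising from the $U_\zeta(\fg)$-module structure on $M$, and the observation that $\mu_{*}: \C[\widetilde{\NN}] \to \C[\NN]$ is the algebra map induced by $\Ind_{B}^{G}$.
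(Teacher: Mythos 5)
Your first inclusion is essentially correct but takes a different route from the paper: the paper simply compares annihilators via the commutative square formed by the restriction maps $R\to S$ and $\Ext^{\bullet}_{u_{\zeta}(\fg)}(M,M)\to\Ext^{\bullet}_{u_{\zeta}(\fb)}(M,M)$, obtaining $J_{\fg}(M)\subseteq\pi^{-1}(J_{\fb}(M))$ directly, whereas you argue through Eckmann--Shapiro for $\Coind^{u_{\zeta}(\fg)}_{u_{\zeta}(\fb)}$. Your version works, but you should verify explicitly that the adjunction isomorphism is linear for the $R$-action through $\pi$; the paper's diagram chase avoids this.

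The reverse inclusion is where there is a genuine gap. You have chosen the right tool (the spectral sequence $R^{i}\Ind_{B}^{G}\Ext^{j}_{u_{\zeta}(\fb)}(M,M)\Rightarrow\Ext^{i+j}_{u_{\zeta}(\fg)}(M,M)$) and you have correctly located the difficulty, but the step you flag as ``the principal obstacle'' is the entire mathematical content of this direction, and the geometric heuristic you offer does not close it. Localization at $\pi^{-1}(P)$ does not commute with $R^{i}\Ind_{B}^{G}$ in any naive sense ($\mathcal{M}_{P}$ is not a rational $B$-module), and the pushforward picture is delicate: the fiber of $G\times^{B}\fu\to\NN$ over a point of $\fu$ consists of all Borels containing that point, so vanishing of $\mathcal{M}=\Ext^{\bullet}_{u_{\zeta}(\fb)}(M,M)$ after localizing at $P$ does not obviously force vanishing of the higher direct images at $\pi^{*}(P)$. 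The paper (following \cite[(5.2.1) Proposition]{NPV}) resolves this with an element-theoretic argument: if $\mathcal{M}_{P}=0$ then some $s\in S-P$ annihilates $\mathcal{M}$; one lifts $s$ to $r\in\pi^{-1}(\{s\})\subseteq R-\pi^{-1}(P)$, checks that $r$ annihilates $\Ind_{B}^{G}\mathcal{M}$, and then propagates this to every $R^{n}\Ind_{B}^{G}\mathcal{M}$ by dimension shifting along a short exact sequence $0\to\mathcal{M}\to\mathcal{I}\to\mathcal{L}\to 0$ of $B.S$-modules with $\mathcal{I}$ injective over $B$ and $\operatorname{ann}_{S}\mathcal{M}$ contained in the annihilators of $\mathcal{I}$ and $\mathcal{L}$. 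Only then does the spectral sequence give $r\cdot\Ext^{\bullet}_{u_{\zeta}(\fg)}(M,M)=0$, contradicting $\Ext^{\bullet}_{u_{\zeta}(\fg)}(M,M)_{\pi^{-1}(P)}\neq 0$. Without this (or an equivalent) argument your proof of $V_{u_{\zeta}(\fg)}(M)\cap\Proj(S)\subseteq V_{u_{\zeta}(\fb)}(M)$ is incomplete.
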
 

\begin{proof} According to \cref{P:inclusions}(a), $V_{u_{\zeta}(\fb)}(M)\subseteq V_{u_{\zeta}(\fg)}(M)\cap \Proj(S)$.  
In order to prove the other inclusion, it suffices to verify the following statement: 
\vskip .15cm 
\noindent 
{\em (*) If $P\in V_{u_{\zeta}(\fb)}(M)$ (regarded in $V_{u_{\zeta}(\fg)}(M)$) and 
$wP\in \Proj(S)$ for $w\in W$ then $wP\in V_{u_{\zeta}(\fb)}(M)$.} 
\vskip .15cm
 
Suppose the statement holds. Let $P\in V_{u_{\zeta}(\fg)}(M)\cap \Proj(S)$. 
Since $G\cdot V_{u_{\zeta}(\fb)}(M)=V_{u_{\zeta}(\fg)}(M)$ \cite[Theorem 6.1]{Dr} it follows using the Bruhat decomposition that $P=(b_{1}wb_{2})P^{\prime}$ where 
$P^{\prime}\in V_{u_{\zeta}(\fb)}(M)$ and $b_{1}, b_{2} \in B$. Furthermore, $b_{1}^{-1}P=wb_{2}P^{\prime}\in \Proj(S)$. Set $P^{\prime \prime}=b_{2}P^{\prime}$. Since $\Proj (S)$ is $B$-stable it follows that $wP^{\prime \prime} = b_{1}^{-1}P \in \Proj(S)$. Since $V_{u_{\zeta}(\fb)}(M)$ is $B$-stable one has $P^{\prime \prime} \in V_{u_{\zeta}(\fb)}(M)$.  Now 
by (*), $wP^{\prime \prime} \in V_{u_{\zeta}(\fb)}(M)$, thus $wb_{2}P^{\prime}\in  V_{u_{\zeta}(\fb)}(M)$ and 
by $B$-stability, $b_{1}wb_{2}P^{\prime}\in  V_{u_{\zeta}(\fb)}(M)$. Consequently, $P\in V_{u_{\zeta}(\fb)}(M)$.

We now prove (*) by induction on the length of $w\in W$. For $l(w)=0$, the statement is trivially true. 
If $l(w)=1$ then $w=s_{\alpha}$ for some $\alpha\in \Pi$. Suppose that $P\in V_{u_{\zeta}(\fb)}(M)$ and 
$s_{\alpha}P\in \Proj(S)$. Then $Z(s_{\alpha}P)\subseteq {\mathfrak u}$. This implies that $Z(P)\subseteq {\mathfrak u}_{J}$ where 
$J=\{\alpha\}$ and $P\in \Proj(S^{\bullet}({\mathfrak u}_{J}^{*}))$. It follows that $s_{\alpha}P\in  \Proj(S^{\bullet}({\mathfrak u}_{J}^{*}))$ 
and $s_{\alpha}P\in V_{u_{\zeta}({\mathfrak p}_{J})}(M) \cap \Proj(S^{\bullet}({\mathfrak u}_{J}^{*}))$. Therefore by  \cref{P:naturalityp} it follows  
$s_{\alpha}P\in V_{u_{\zeta}({\mathfrak a}_{J})}(M)$. On the other hand, arguing as in the proof of \cref{P:inclusions} shows $V_{u_{\zeta}({\mathfrak a}_{J})}(M)\subseteq  V_{u_{\zeta}({\mathfrak b})}(M)$ and the desired result follows. 

Now assume the statement holds for all $w^{\prime}\in W$ with $l(w^{\prime})<l(w)=t$. Let $w=s_{\alpha_{1}}s_{\alpha_{2}}\dots s_{\alpha_{t}}$ 
be a reduced expression. Set $w^{\prime}=s_{\alpha_{1}}s_{\alpha_{2}}\dots s_{\alpha_{t-1}}$. Assume $P\in V_{u_{\zeta}(\fb)}(M)$ and $wP\in \Proj(S)$. Then $Z(P) \subseteq \fu$ and $wZ(P) = Z(wP)\subseteq {\mathfrak u}$.  According to \cite[Corollary 10.2C]{Humphreys}, $w(\alpha_{t})=s_{\alpha_{1}}s_{\alpha_{2}}\dots s_{\alpha_{t}}(\alpha_{t})<0$, thus $Z(P) \subseteq {\mathfrak u}_{\{\alpha_{t}\}}$. Consequently, $ Z(s_{\alpha_{t}}P)=s_{\alpha_{t}}Z(P)  \subseteq {\mathfrak u}_{\{\alpha_{t}\}} \subseteq \fu$ and so $s_{\alpha_{t}}P\in  \Proj(S)$. From the induction hypothesis, $s_{\alpha_{t}}P\in V_{u_{\zeta}(\fb)}(M)$. Now set $P^{\prime}=s_{\alpha_{t}}P$. 
Then $P^{\prime}\in V_{u_{\zeta}(\fb)}(M)$ and $w^{\prime}P^{\prime} =wP \in \Proj(S)$. Again by the induction hypothesis, $w^{\prime}P^{\prime}\in V_{u_{\zeta}(\fb)}(M)$.  That is,  $wP\in V_{u_{\zeta}(\fb)}(M)$ as desired. 
\end{proof} 

\subsection{Verifying Assumption \texorpdfstring{~\ref{A:projectivity}}{2.5.1}} The following theorem demonstrates that Assumption~\ref{A:projectivity} holds for 
$\bK$.  In the proof we freely make use of the fact that if $M$ is an object of $\stmod (u_{\zeta}(\fg ))$ (resp.\ $\stmod (u_{\zeta}(\fb ))$), then $W_{u_{\zeta}(\fg)}(M)=V_{u_{\zeta}(\fg )}(M)=\ZZ_{u_{\zeta}(\fg )}(M)$ (resp.\ $W_{u_{\zeta}(\fb)}(M)=V_{u_{\zeta}(\fb )}(M)=\ZZ_{u_{\zeta}(\fb )}(M)$) \cite[Theorem 5.5]{BIK}.  In particular, $W_{u_{\zeta}(\fg)}(-)$ and $W_{u_{\zeta}(\fb)}(-)$ satisfy properties  (\ref{SS:supportdata}.\ref{E:supportone})--(\ref{SS:supportdata}.\ref{E:supportfive}) whenever all objects involved are compact. When $M$ is not compact the containment $W_{u_{\zeta}(\fb)}(M) \subseteq \ZZ_{u_{\zeta}(\fb )}(M)$ still holds by \cite[Remark 5.4]{BIK}.  Furthermore, $ \ZZ_{u_{\zeta}(\fb )}(-)$ satisfies properties (\ref{SS:supportdata}.\ref{E:supportone})--(\ref{SS:supportdata}.\ref{E:supportfour}).

\begin{theorem} \label{P:projectivityverify} Let $\bK=\operatorname{Stmod}(U_{\zeta}(\fg))$, let $M\in \bK^{c}$ with $M\neq 0$, $\bI'=\Tensor(M)$, and $N\in \bI_{V(M)}$ (recall \cref{E:IWdefinition}). 
If  $M\otimes L_{\bI'} (N)=0$ then  $L_{\bI'} (N)=0$. 
\end{theorem}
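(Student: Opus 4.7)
The plan is to reduce the statement to a support-vanishing assertion over the small Borel subalgebra $A = u_\zeta(\fb)$. Combining \cref{P:projaxiomverify} with \cref{T:proj1} (applied with $J = \varnothing$, so $u_\zeta(\fp_J) = A$), $L_{\bI'}(N) = 0$ in $\bK$ is equivalent to $L_{\bI'}(N)$ being projective as an $A$-module, and by the Benson--Iyengar--Krause theory this is in turn equivalent to $W_A(L_{\bI'}(N)) = \varnothing$. The whole goal therefore reduces to this support-vanishing.

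First I would exploit the hypothesis $M \otimes L_{\bI'}(N) = 0$ to extract $\bar A$-level information. The above reductions, applied to $M \otimes L_{\bI'}(N)$, yield its projectivity as an $A$-module, and then the discussion at the end of \cref{SS:TensorAbarmodules} (via \cref{P:btou}) forces $\gr M \otimes \gr L_{\bI'}(N)$ to be projective as a $\Delta$-module. Since $M \in \bK^c$ is finite-dimensional, so is $\gr M$, and \cref{T:grWintersection} applies to give
\[
W_{\bar A}(\gr M) \cap W_{\bar A}(\gr L_{\bI'}(N)) = \varnothing.
\]

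Next I would establish the $A$-level bound $W_A(L_{\bI'}(N)) \subseteq W_A(M)$. From $N \in \bI_{V(M)}$ one has $V(N) \subseteq V(M)$, hence (since both sides are images of $G$-stable subsets) $V_{u_\zeta(\fg)}(N) \subseteq V_{u_\zeta(\fg)}(M)$; intersecting with $\Proj(S)$ and applying \cref{T:naturality} yields $W_A(N) \subseteq W_A(M)$. The same naturality, combined with the support data axioms for $V_{u_\zeta(\fg)}$, shows that every compact $K \in \bI' = \Tensor(M)$ satisfies $W_A(K) \subseteq W_A(M)$. Because restriction $\bK \to \Stmod(A)$ is exact and preserves coproducts, it takes $\Gamma_{\bI'}(N) \in \Loc(\bI')$ into $\Loc(\bI'|_A)$; a standard argument using the triangle and coproduct axioms of $W_A$ then produces $W_A(\Gamma_{\bI'}(N)) \subseteq W_A(M)$. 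Applying the triangle axiom of $W_A$ to the localization triangle $\Gamma_{\bI'}(N) \to N \to L_{\bI'}(N) \to$ restricted to $A$ now yields
\[
W_A(L_{\bI'}(N)) \subseteq W_A(N) \cup W_A(\Gamma_{\bI'}(N)) \subseteq W_A(M).
\]

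Finally I would apply \cref{T:Winclusion} twice: once to $L_{\bI'}(N)$, giving $W_A(L_{\bI'}(N)) \subseteq W_{\bar A}(\gr L_{\bI'}(N))$; once to $M$, so that $W_A(L_{\bI'}(N)) \subseteq W_A(M) \subseteq W_{\bar A}(\gr M)$. Intersecting these two containments with the vanishing from \cref{T:grWintersection} forces
\[
W_A(L_{\bI'}(N)) \subseteq W_{\bar A}(\gr M) \cap W_{\bar A}(\gr L_{\bI'}(N)) = \varnothing,
\]
so $L_{\bI'}(N)$ is $A$-projective and hence zero in $\bK$, as desired. The main technical obstacle will be the $\Loc$-type argument used to bound $W_A(\Gamma_{\bI'}(N))$, since one must transfer compact-generator support bounds to a non-compact localization object; the conceptual insight is that the two-fold use of \cref{T:Winclusion} (bounding $W_A$ above by $W_{\bar A}\circ\gr$ at two different objects) converts the $\bar A$-level tensor product theorem into the required $A$-level vanishing, thereby sidestepping the absence of a tensor product theorem for $A$ itself.
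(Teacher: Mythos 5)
Your proof is correct and follows essentially the same route as the paper's: reduce via \cref{T:proj1} and \cref{P:projaxiomverify} to showing $W_{A}(L_{\bI'}(N))=\varnothing$, extract $W_{\bar A}(\gr M)\cap W_{\bar A}(\gr L_{\bI'}(N))=\varnothing$ from \cref{T:grWintersection}, prove $W_{A}(L_{\bI'}(N))\subseteq W_{A}(M)$ using \cref{T:naturality} together with the standard $\Loc$-support lemma, and finish by applying \cref{T:Winclusion}. The only divergences are minor: you deduce $W_{A}(Q)\subseteq W_{A}(M)$ for compact $Q$ with $V(Q)\subseteq V(M)$ directly by intersecting $V_{u_{\zeta}(\fg)}(Q)\subseteq V_{u_{\zeta}(\fg)}(M)$ with $\Proj(S)$ (the paper instead runs a Bruhat-decomposition argument, which your shortcut legitimately bypasses), and you route the localization bound through $\Gamma_{\bI'}(N)$ and the triangle $\Gamma_{\bI'}(N)\to N\to L_{\bI'}(N)\to$ rather than through $L_{\bI'}(N)\in\Loc(\bI_{V(M)})$ as the paper does, both resting on the same lemma.
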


\begin{proof}Set $A=u_{\zeta}(\fb)$ and let $\bar A = \gr A$ be the associated graded algebra as described in Section~\ref{SS:grb}. Combining Theorem~\ref{T:proj1} and the discussion at the end of \cref{SS:modulecategories}, it suffices to show that 
$L_{\bI'} (N)=0$ in $\Stmod(A)$. 

From our assumption, $M\otimes L_{\bI'} (N)=0$ and so $ \gr \left( M\otimes L_{\bI'} (N) \right)=0$ in $\Stmod (\bar{A})$. Therefore,
$$\gr M \otimes \gr  L_{\bI'}(N) =0$$ 
in $\Stmod(\Delta)$. 
According to Theorem~\ref{T:grWintersection}, 
\begin{equation*}
W_{\bar A}(\gr M)\cap W_{\bar A}(\gr L_{\bI'}(N))=\varnothing.
\end{equation*}
Applying \cref{T:Winclusion} yields, 
\begin{equation}\label{eq:tensorprod}
W_{A}(M)\cap W_{A}(L_{\bI'}(N))=\varnothing.
\end{equation}

Now suppose that $P\notin W_{A}(M)$. Set $\bI=\bI_{V(M)}$. We first claim that $P\notin W_{A}(Q)=V_{A}(Q)=\ZZ_{A}(Q)$ 
for any $Q\in \bI$.  For suppose that $P\in W_{A}(Q)=V_{A}(Q)$ for some $Q\in \bI$. Then 
$$P\in V_{A}(Q)\subseteq V_{u_{\zeta}(\fg)}(Q)\subseteq V_{u_{\zeta}(\fg)}(M) =G\cdot V_{A}(M).$$
 The first containment is \cref{T:naturality}, the second holds because $Q \in \bI$ and by definition of $ \bI_{V(M)}$,
 and the equality follows from \cite[Theorem 6.1]{Dr}.  Thus
 there exists $P^{\prime}\in V_{A}(M)$ with $P=g\cdot P^{\prime}$ for some $g\in G$. Using the Bruhat 
decomposition: $G=\bigcup_{w\in W} BwB$, it follows that $P=bwb^{\prime}\cdot P^{\prime}$ for some $b, b^{\prime}\in B$ and $w\in W$. 

Since $V_{A}(M)$ and $V_{A}(Q)$ are $B$-invariant, we have $\widetilde P := b'\cdot P' \in V_{A}(M)$ with 
$w\widetilde P = b^{-1} \cdot P  \in V_{A}(Q) \subseteq \Proj(S)$. On the other hand $w\widetilde P \in G \cdot V_{A}(M)=V_{u_{\zeta}(\fg )}(M)$.  Taken together with Theorem~\ref{T:naturality}, 
$$w\widetilde P\in V_{u_{\zeta}(\fg)}(M)\cap  \Proj(S)=V_{A}(M).$$ That is, $b^{-1}\cdot P \in V_{A}(M)$ and, by the $B$-stability of $V_{A}(M)$, $P \in V_{A}(M)$.  Since by assumption $P \not\in W_{A}(M)$, it must be that $P\notin W_{A}(Q)$ for any $Q\in \bI$. 

As in the proof of  \cref{T:Hopkins}, we have $\bI' \subseteq \bI$, and $Q\in \bI$ implies $L_{\bI'}(Q)\in \operatorname{Loc}(\bI)$. Since $W_{A}(-)$ satisfies (\ref{SS:supportdata}.\ref{E:supportone})--(\ref{SS:supportdata}.\ref{E:supportfour}), the preceding result along with \cite[Lemma 2.4.1]{BKN} implies that $P\notin W_{A}(L_{\bI'}(Q))$ for all $Q\in \bI$; and in particular, for $Q=N$.   That is, we have shown that $W_{A}(L_{\bI'}(N)) \subseteq W_{A}(M)$.  

Combining this inclusion with \cref{eq:tensorprod} yields $W_{A}(L_{\bI'}(N))=\varnothing$ and so $L_{\bI'} (N)=0$  by \cite[Theorem 5.2]{BIK}.
\end{proof}

\subsection{Classification of Thick Tensor Ideals}   With  our verifications in the prior sections we can present a complete classification of the (thick) tensor ideals in the category $\stmod (U_{\zeta}(\fg))$ using Theorem~\ref{I:bijectiongeneral}. 

\begin{theorem} \label{T:classification}   Let $G$ be a complex simple algebraic group over $\C $ with $\fg=\operatorname{Lie }G$. 
Assume that $\zeta$ is a primitive $\ell$th root of unity where $\ell>h$. Let $\bK=\operatorname{Stmod}(U_{\zeta}(\fg))$, 
$\bK^{c}=\operatorname{stmod}(U_{\zeta}(\fg))$ and $X=G\text{-}\Proj(\C [\NN])$. 
Adopt the notation of \cref{SS:zariski}.  Let $V:\bK^{c} \rightarrow \XX$ be the quasi support data on $\bK^{c}$ defined in (\ref{e:supportdatadef}). 
There is a pair of mutually inverse maps
$$
\{\text{thick tensor ideals of $\bK^{c}$}\} \begin{array}{c} \gf{\Gamma}{\longrightarrow} \\ \gf{\longleftarrow}{\Theta} \end{array}  \XX_{sp},
$$
given by 
\begin{align*}
\Gamma(\bI)=\bigcup_{M\in \bI} V(M), \qquad \Theta(W)= \bI_{W},
\end{align*}
where $\bI_{W}=\{M\in \bK^{c} \mid V(M)\subseteq W \}$. 
\end{theorem}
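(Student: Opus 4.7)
The plan is to simply invoke the general machinery of \cref{I:bijectiongeneral} once all its hypotheses have been verified for the specific quasi support data $V$ defined in \cref{e:supportdatadef}. Recall that \cref{I:bijectiongeneral} asserts that for any compactly generated TTC $\bK$ equipped with a quasi support data $V:\bK^{c}\to\XX_{cl}$ into a Zariski space $X$ satisfying the faithfulness condition (\ref{SS:supportdata}.\ref{E:supportseven}), the realization property (\ref{SS:supportdata}.\ref{E:supporteight}), and Assumption~\ref{A:projectivity}, one obtains exactly the pair of mutually inverse maps $\Gamma$ and $\Theta$ described in the statement. Thus the proof reduces to a checklist.

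First I would confirm that $\bK=\Stmod(U_{\zeta}(\fg))$ is compactly generated with $\bK^{c}=\stmod(U_{\zeta}(\fg))$ its category of compact objects, and that $X=G\text{-}\Proj(\C[\NN])$ is a Zariski space. The first assertion is recorded at the end of \cref{SS:modulecategories}, while the second was established in \cite[Section 2.3]{BKN} once one observes that $\C[\NN]\cong\HH^{2\bullet}(u_{\zeta}(\fg),\C)$ by Ginzburg--Kumar, on which $G$ acts rationally preserving the grading. Next I would verify that $V$ is a quasi support data, i.e.\ properties (\ref{SS:supportdata}.\ref{E:supportone})--(\ref{SS:supportdata}.\ref{E:supportfive}); this is routine and was already carried out in \cref{SS:ConstructionOfSupport} by invoking the standard arguments from \cite[Section 5]{PW}, with the key point being that $\rho$ transports the support data on $\Proj(R)$ to one on $X_{G}$.

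The three remaining hypotheses constitute the real content of the section. Property (\ref{SS:supportdata}.\ref{E:supportseven}) — that $V(M)=\varnothing$ iff $M=0$ in $\bK^{c}$ — is precisely \cref{C:projaxiomverify}, which in turn rests on \cref{P:projaxiomverify} (projectivity over $U_{\zeta}(\fg)$ is detected by projectivity over $u_{\zeta}(\fg)$). The realization property (\ref{SS:supportdata}.\ref{E:supporteight}) follows from the observation in Section~5.2 that every closed $G$-stable subset of $\Proj(\C[\NN])$ is a union of closures of nilpotent orbits, combined with \cref{P:orbitrealization}, which realizes $\overline{\OO}$ as the support variety of a regular tilting module, and the construction of Carlson-type modules from \cite[Section 2.4]{BKN} to take finite unions.

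The most substantial input is Assumption~\ref{A:projectivity}, and this is exactly what \cref{P:projectivityverify} establishes. Once all hypotheses of \cref{I:bijectiongeneral} are in place, that theorem immediately yields the asserted mutually inverse bijection between thick tensor ideals of $\bK^{c}=\stmod(U_{\zeta}(\fg))$ and specialization-closed subsets of $X=G\text{-}\Proj(\C[\NN])$ via the stated maps $\Gamma$ and $\Theta$. The main obstacle in this framework is not the invocation of \cref{I:bijectiongeneral} itself, which is purely formal, but rather the verification of Assumption~\ref{A:projectivity} in \cref{P:projectivityverify}, since establishing it required the entire technical apparatus developed in the preceding sections (passage to $\gr u_{\zeta}(\fb)$, the tensor product theorem \cref{T:grWintersection} for $\bar{A}$, the comparison \cref{T:Winclusion} between $W_{A}$ and $W_{\bar{A}}\circ \gr$, and the naturality \cref{T:naturality}).
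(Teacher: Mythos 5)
Your proposal is correct and follows exactly the route the paper takes: Theorem~\ref{T:classification} is stated as an immediate application of Theorem~\ref{I:bijectiongeneral}, with the hypotheses supplied by the construction in \cref{SS:ConstructionOfSupport}, \cref{C:projaxiomverify} for (\ref{SS:supportdata}.\ref{E:supportseven}), \cref{P:orbitrealization} together with the argument of \cite[Section 2.4]{BKN} for (\ref{SS:supportdata}.\ref{E:supporteight}), and \cref{P:projectivityverify} for Assumption~\ref{A:projectivity}. Your checklist matches the paper's verification point for point, including the correct identification of Assumption~\ref{A:projectivity} as the substantive input.
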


\subsection{Computation of \texorpdfstring{$\operatorname{Spc}(\stmod(U_{\zeta}(\fg)))$}{Spc(stmod(U\_zeta(g)))}} 
We can apply Theorem~\ref{K:bijectiongeneral} to identify the 
Balmer spectrum for the stable module category for the quantum group $U_{\zeta}(\fg)$. 

\begin{theorem} \label{T:spcquantum} Let $G$ be a complex simple algebraic group over $\C $ with $\fg=\operatorname{Lie }G$. 
Assume that $\zeta$ is a primitive $\ell$th root of unity where $\ell>h$. Then there is a homeomorphism
$$
\operatorname{Spc}(\operatorname{stmod}(U_{\zeta}(\fg))) \cong G\text{-}\Proj(\C [\NN]).$$  
\end{theorem}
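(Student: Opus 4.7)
The plan is to verify directly that the hypotheses of \cref{K:bijectiongeneral} hold in our quantum group setting, so that the homeomorphism is obtained as an immediate consequence. The substantive content needed for the proof has already been assembled in the preceding sections; the argument is essentially an exercise in bookkeeping.

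First, I would set up the inputs to \cref{K:bijectiongeneral}: take $\bK=\Stmod(U_{\zeta}(\fg))$, $\bK^{c}=\stmod(U_{\zeta}(\fg))$, and $X=G\text{-}\Proj(\C[\NN])$. By the discussion in \cref{SS:zariski} the space $X$ is Zariski, and the final paragraph of \cref{SS:modulecategories} shows that $\bK$ is a compactly generated TTC with subcategory of compact objects precisely $\bK^{c}$. The candidate quasi support data is the map $V:\bK^{c}\to \XX_{cl}$ of \cref{e:supportdatadef}, namely $V(M)=\rho(V_{u_{\zeta}(\fg)}(M))$.

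Second, I would check that $V$ is a quasi support data satisfying the additional faithfulness and realization axioms. Properties (\ref{SS:supportdata}.\ref{E:supportone})--(\ref{SS:supportdata}.\ref{E:supportfive}) follow in exactly the same way as in the finite group scheme setting of \cite[Section 5]{PW}, once one observes that $V_{u_{\zeta}(\fg)}$ factors through the closed $G$-stable subsets of $\Proj(\C[\NN])$. The faithfulness condition (\ref{SS:supportdata}.\ref{E:supportseven}) is exactly \cref{C:projaxiomverify}, whose proof reduces to \cref{P:projaxiomverify} relating projectivity over $U_{\zeta}(\fg)$ and $u_{\zeta}(\fg)$. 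The realization axiom (\ref{SS:supportdata}.\ref{E:supporteight}) is handled in \cref{SS:ConstructionOfSupport}: \cref{P:orbitrealization} produces, for each nilpotent $G$-orbit $\OO$, a finite-dimensional $U_{\zeta}(\fg)$-module $M$ with $V^{\max}(M)=\overline{\OO}$, and the standard argument from \cite[Section 2.4]{BKN} then realizes an arbitrary closed subset of $X$ by tensoring with Carlson modules.

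Third, I would invoke \cref{P:projectivityverify}, which is precisely the verification of \cref{A:projectivity} for $\bK=\Stmod(U_{\zeta}(\fg))$. This is the only genuinely hard input; its proof proceeds by passing to $u_{\zeta}(\fb)$, then to the associated graded $\bar A=\gr u_{\zeta}(\fb)$, and applying the tensor product theorem \cref{T:grWintersection} together with the naturality result \cref{T:naturality}. All the machinery underlying this step has been set up in Sections 4--6.

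With these four items in hand, \cref{K:bijectiongeneral} applies verbatim and produces a homeomorphism $f:X\to \Spc(\bK^{c})$, which is the asserted identification
\[
\Spc(\stmod(U_{\zeta}(\fg)))\cong G\text{-}\Proj(\C[\NN]).
\]
The principal obstacle in this argument was verifying \cref{A:projectivity}; once that is granted, the present theorem is a formal consequence of the general framework built in Sections 2.4--2.7.
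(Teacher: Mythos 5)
Your proposal matches the paper's argument exactly: the paper proves this theorem simply by invoking \cref{K:bijectiongeneral}, with all hypotheses (the quasi support data of \cref{e:supportdatadef}, faithfulness via \cref{C:projaxiomverify}, realization via \cref{P:orbitrealization}, and \cref{A:projectivity} via \cref{P:projectivityverify}) having been verified in the preceding sections. Your bookkeeping of those inputs is accurate and complete.
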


\subsection{The category \texorpdfstring{$\mod(U_{\zeta}(\fg))$}{mod(U\_zeta(g))}} Let $R=\C[\NN]$. Following the work in \cite{Lorenz2009} and \cite{Lorenz2013}, the rational ideals of $R$ are precisely the maximal ideals of $R$. Furthermore, the points in $G\text{-}\operatorname{MaxSpec}(\C [\NN])$ are in bijective correspondence with $G$-orbits in $\operatorname{MaxSpec}(\C [\NN]))$ (i.e., the nilpotent orbits in $\NN$). It follows that $G\text{-}\operatorname{MaxSpec}(\C [\NN])$ is finite and 
$$G\text{-}\operatorname{MaxSpec}(\C [\NN])=G\text{-}\operatorname{Spec}(\C [\NN])$$ by \cite[Proposition 1]{Lorenz2013}. Furthermore, $\rho$ defines an inclusion preserving bijection between the $G$-stable closed sets of $\NN$ and the closed sets of $G\text{-}\operatorname{Spec}(\C [\NN])$ (see \cite[Section 2.3]{BKN}).  This allows us to lift results from $\bK^{c}$ to $\mod (U_{\zeta}(\fg ))$. 

\begin{prop}\label{P:TensorProductPropertyforUzeta}  Let $M$ and $N$ be modules in  $\mod (U_{\zeta}(\fg ))$.  Then 
\[
V_{u_{\zeta}(\fg )}\left( M \otimes N \right) = V_{u_{\zeta}(\fg )}\left( M \right) \cap V_{u_{\zeta}(\fg )}\left( N \right).
\]

\end{prop}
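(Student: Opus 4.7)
The plan is to derive the tensor product property for $V_{u_{\zeta}(\fg)}$ from the already-established tensor product property for the quasi support data $V$ on $X_G = G\text{-}\Proj(\C[\NN])$, by pulling back along the quotient map $\rho\colon \Proj(\C[\NN]) \twoheadrightarrow X_G$. The inclusion $V_{u_{\zeta}(\fg)}(M\otimes N) \subseteq V_{u_{\zeta}(\fg)}(M) \cap V_{u_{\zeta}(\fg)}(N)$ is standard (it is axiom \ref{SS:supportdata}.\ref{E:supportfive} applied to the underlying $u_{\zeta}(\fg)$-support theory), so the real content is the reverse inclusion.

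First I would apply \cref{P:TensorProductProperty} to the quasi support data $V\colon \bK^{c}\to \XX_{cl}$ from \eqref{e:supportdatadef}, with $\bK=\Stmod(U_{\zeta}(\fg))$ and $X=X_G$. The two hypotheses of that proposition are in hand: Hopkins' theorem (\cref{T:Hopkins}) applies because \cref{A:projectivity} has been checked in \cref{P:projectivityverify}, and the realization property \ref{SS:supportdata}.\ref{E:supporteight} follows from \cref{P:orbitrealization} combined with the argument recorded in \cite[Section 2.4]{BKN}. This yields
\[
V(M\otimes N) \;=\; V(M)\cap V(N)
\]
as closed subsets of $X_G$, for all $M,N \in \stmod(U_{\zeta}(\fg))$.

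Next I would pull this identity back along $\rho$. For any $M\in \mod(U_{\zeta}(\fg))$ the variety $V_{u_{\zeta}(\fg)}(M)$ is a $G$-stable closed subset of $\Proj(\C[\NN])$, and by the paragraph preceding the proposition statement (together with the discussion in \cite[Section 2.3]{BKN}) the map $\rho$ restricts to an inclusion-preserving bijection between $G$-stable closed subsets of $\Proj(\C[\NN])$ and closed subsets of $X_G$. Consequently $V_{u_{\zeta}(\fg)}(M)=\rho^{-1}(V(M))$, and since taking preimages commutes with intersections,
\[
V_{u_{\zeta}(\fg)}(M\otimes N) \;=\; \rho^{-1}\!\bigl(V(M\otimes N)\bigr) \;=\; \rho^{-1}\!\bigl(V(M)\cap V(N)\bigr) \;=\; V_{u_{\zeta}(\fg)}(M)\cap V_{u_{\zeta}(\fg)}(N).
\]

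No serious obstacle is anticipated; the heavy lifting has already been carried out in \cref{P:TensorProductProperty}, \cref{P:projectivityverify}, and \cref{P:orbitrealization}. Two small bookkeeping points deserve mention. One is the passage between $\mod(U_{\zeta}(\fg))$ and $\stmod(U_{\zeta}(\fg))$: this is harmless because $V_{u_{\zeta}(\fg)}$ is blind to projective summands, and if either of $M$ or $N$ is $u_{\zeta}(\fg)$-projective then so is $M\otimes N$ (via the Hopf structure), making both sides of the identity empty. The other is the distinction between $\Proj$ and $\Spec$ of $\C[\NN]$; since $V_{u_{\zeta}(\fg)}(M)$ is a conical $G$-stable subvariety, working projectively loses no information, as has already been arranged in \cref{SS:ConstructionOfSupport}.
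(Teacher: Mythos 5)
Your proposal is correct and follows essentially the same route as the paper: both deduce the identity from \cref{P:TensorProductProperty} applied to $V$ on $X_G$ together with the fact that $\rho$ restricts to a bijection between $G$-stable closed subsets and closed subsets of $X_G$. The only cosmetic difference is that you pull back along $\rho^{-1}$ (using that preimages commute with intersections), whereas the paper pushes forward along $\rho$ and then invokes the bijectivity to upgrade $\rho(A)=\rho(B)$ to $A=B$; the ingredients are identical.
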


\begin{proof} From the fact that $V$ satisfies the tensor product property (\cref{P:TensorProductProperty}) and  $V_{u_{\zeta}(\fg)}$ satisfies (\ref{SS:supportdata}.\ref{E:supportfour}), there is the following chain of containments:
\begin{align*}
\rho\left(V_{u_{\zeta}(\fg )}\left( M \right) \cap V_{u_{\zeta}(\fg )}\left( N \right) \right) &\subseteq \rho\left(V_{u_{\zeta}(\fg )}\left( M \right) \right) \cap \rho\left(  V_{u_{\zeta}(\fg )}\left( N \right)\right)\\
   &= V(M) \cap V(N) \\
   & = V(M \otimes N) \\
    & = \rho \left(V_{u_{\zeta}(\fg )}\left( M \otimes N \right) \right) \\
     &\subseteq \rho \left( V_{u_{\zeta}(\fg )}\left( M \right) \cap V_{u_{\zeta}(\fg )}\left( N \right)\right)
\end{align*}

In particular, $\rho(V_{u_{\zeta}(\fg )}( M ) \cap V_{u_{\zeta}(\fg )}( N ) ) = \rho (V_{u_{\zeta}(\fg )}( M \otimes N ) )$.  However, both $V_{u_{\zeta}(\fg )}( M ) \linebreak \cap V_{u_{\zeta}(\fg )}( N )$ and $V_{u_{\zeta}(\fg )}( M \otimes N )$ are $G$-stable closed sets in $\NN$ and $\rho$ is a bijection on such sets and so they must be equal.
\end{proof}

We call a full subcategory $I$ of $\mod(U_{\zeta}(\fg))$ a thick tensor ideal if 1) given any $M \in I$ and $N \in \mod(U_{\zeta}(\fg))$, $M \otimes N \in I$, and 2) if $M \oplus N \in I$, then $M$ and $N$ are in $I$, and 3) if $0\to A \to B \to C \to 0$ is an exact sequence with two of $A,B,C$ in $I$, then the third is in $I$.  We call a proper tensor ideal prime if $M \otimes N \in I$ implies $M \in I$ or $N \in I$ for all $M,N \in \mod (U_{\zeta}(\fg ))$.  From Theorem~\ref{T:classification}, we can also classify the thick tensor ideals and thick prime ideals in $\operatorname{mod}(U_{\zeta}(\fg))$. 

The thick tensor ideals are ordered by inclusion as are the $G$-stable closed subsets of $\NN$. Recall that there is also a partial order on the nilpotent orbits given by declaring $G.x \preceq G.y$ (for $x, y\in \NN$) if and only if $\overline{G.x}\subseteq \overline{G.y}$.

\begin{cor} \label{C:thickprime} Let $G$ be a complex simple algebraic group over $\C $ with $\fg=\operatorname{Lie }G$. 
Assume that $\zeta$ is a primitive $\ell$th root of unity where $\ell>h$. Then there exist order preserving bijective correspondences
$$
\{\text{thick tensor ideals of $\operatorname{mod}(U_{\zeta}(\fg))$}\}   
\begin{array}{c} \gf{\phantom{\Gamma}}{\longrightarrow} \\ \gf{\longleftarrow}{\phantom{\Theta}} \end{array}
\{\text{$G$-stable closed subsets of $\NN$}\},
$$
and
$$
\{\text{nonzero thick prime tensor ideals of $\operatorname{mod}(U_{\zeta}(\fg))$}\}   
\begin{array}{c} \gf{\phantom{\Gamma}}{\longrightarrow} \\ \gf{\longleftarrow}{\phantom{\Theta}} \end{array}
\{\text{nilpotent $G$-orbits in $\NN$}\}.
$$
\end{cor}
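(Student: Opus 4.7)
The plan is to deduce both bijections from Theorem~\ref{T:classification} by combining it with the translation described just before the statement: the map $\rho$ provides a bijection between $G$-stable closed subsets of $\NN$ and closed subsets of $G\text{-}\Spec(\C[\NN])$, and since $G\text{-}\Spec(\C[\NN])$ is finite, every subset there is already closed.

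For the first correspondence, I would set
\[
\Gamma(I) = \bigcup_{M \in I} V_{u_\zeta(\fg)}(M), \qquad \Theta(V) = \{M \in \mod(U_{\zeta}(\fg)) : V_{u_\zeta(\fg)}(M) \subseteq V\},
\]
and verify that $\Gamma(I)$ is a $G$-stable closed subset of $\NN$ (closed because $\NN$ has only finitely many $G$-orbits and hence only finitely many $G$-stable closed subsets) and that $\Theta(V)$ is a thick tensor ideal, using Proposition~\ref{P:TensorProductPropertyforUzeta} for closure under tensor products, together with direct-sum compatibility and the support triangle inequality. The identity $\Gamma \circ \Theta = \mathrm{id}$ would then follow from Proposition~\ref{P:orbitrealization}, by realizing each orbit closure in $V$ as the support variety of a suitable tilting module. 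For $\Theta \circ \Gamma = \mathrm{id}$, I would pass to the image of $I$ in $\bK^{c} = \stmod(U_\zeta(\fg))$, apply Theorem~\ref{T:classification} there, and translate back to $\NN$ using $\rho$ and the finiteness of $G\text{-}\Spec$. Order-preservation of both maps is immediate from the definitions.

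For the second correspondence, I would first observe that $G$-stable irreducible closed subsets of $\NN$ are exactly the orbit closures $\overline{\mathcal O}$ of nilpotent $G$-orbits, and that $\overline{\mathcal O} \mapsto \mathcal O$ is a bijection with the set of nilpotent orbits. Then I would show that under the first correspondence the (nonzero) prime thick tensor ideals of $\mod(U_\zeta(\fg))$ match precisely the $G$-stable irreducible closed subsets of $\NN$. By Theorem~\ref{T:spcquantum}, the prime ideals of $\bK^{c}$ biject with the points of $G\text{-}\Proj(\C[\NN])$, i.e., with the nonzero nilpotent orbits; pulling back along $\mod \to \stmod$ gives the corresponding primes in $\mod(U_\zeta(\fg))$ (each containing the ideal of projectives). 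The remaining case is the trivial prime $\{0\}$ of $\mod(U_\zeta(\fg))$, which is prime because the tensor product of two nonzero finite-dimensional $U_\zeta(\fg)$-modules is nonzero, and this prime corresponds to the zero orbit $\{0\}$. Together these exhaust all nilpotent orbits.

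The main obstacle will be the careful bookkeeping of the translation from the $\stmod$-level classification (parametrized by $G\text{-}\Proj(\C[\NN])$, which omits the irrelevant ideal) to the $\mod$-level statement (parametrized by $G$-stable closed subsets of $\NN$, which includes the zero orbit). This requires separately accounting for the zero orbit $\{0\}$ and the ideal of projectives, which are precisely the data that correspond to the irrelevant ideal removed when passing from $\Spec$ to $\Proj$.
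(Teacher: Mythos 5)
Your overall architecture is the same as the paper's: both bijections are obtained by transporting Theorem~\ref{T:classification} from $\stmod(U_{\zeta}(\fg))$ to $\mod(U_{\zeta}(\fg))$ via $\rho$ and the finiteness of $G\text{-}\Spec(\C[\NN])$, with the zero orbit and the projective modules accounted for separately. However, the two places you defer to ``bookkeeping'' are exactly where the content lies, and as written your plan does not close them.

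First, the step ``pass to the image of $I$ in $\bK^{c}$'' is only legitimate once you know that every \emph{nonzero} thick tensor ideal $I$ of $\mod(U_{\zeta}(\fg))$ contains the full subcategory $\mathcal{P}$ of projectives; otherwise the image of $I$ in the stable category need not be triangulated (closure under $\Omega^{\pm 1}$ and cones requires the relevant projective modules to already lie in $I$). You flag this but give no argument. The paper supplies it: for $0\neq M\in I$ and $0\neq P$ projective, $M\otimes P$ is a nonzero projective in $I$; and any nonzero projective $Q$ generates all of $\mathcal{P}$, because for projective $P$ the surjection $\operatorname{ev}\otimes 1: Q^{*}\otimes Q\otimes P\to P$ splits, exhibiting $P$ as a direct summand of an object of $\Tensor(Q)$.

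Second, your treatment of the zero orbit in the second bijection does not prove the statement as given. You pair the orbit $\{0\}$ with the zero ideal of $\mod(U_{\zeta}(\fg))$, but the statement restricts to \emph{nonzero} prime ideals, so the zero ideal is unavailable (under the first correspondence it is sent to $\varnothing\subset\NN$, not to $\{0\}$). The partner of the orbit $\{0\}$ must be the nonzero prime $\mathcal{P}$ itself, consistent with $\Theta(\{0\})=\mathcal{P}$ in the first correspondence. Its primeness does not come for free from pulling back primes of $\bK^{c}$: one must check that $M\otimes N$ projective forces $M$ or $N$ projective, which the paper deduces from Proposition~\ref{P:TensorProductPropertyforUzeta} together with the fact that $\NN$ has a unique minimal nonzero orbit, whose closure lies in every nonzero $G$-stable closed subset. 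Finally, be aware that the pullbacks of the primes of $\bK^{c}$ under Theorem~\ref{T:spcquantum} already include $\mathcal{P}$ (it is the pullback of the zero ideal of $\bK^{c}$, the Balmer prime attached to the minimal nonzero orbit), so simply concatenating ``pullbacks $\leftrightarrow$ nonzero orbits'' with ``one extra prime $\leftrightarrow$ zero orbit'' requires reconciling two different indexings; the clean route is to run the first correspondence and then identify which closed sets $W$ make $\bI_{W}$ prime.
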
 

\begin{proof} 
First, note that the zero ideal is a prime thick tensor ideal of $\mod (U_{\zeta}(\fg))$ which is contained in every other thick tensor ideal.  

Second, the full subcategory $\mathcal{P}$ of all projective modules in $\mod (U_{\zeta}(\fg ))$ is a prime thick tensor ideal. That $\mathcal{P}$ is prime follows from \cref{P:TensorProductPropertyforUzeta}, the fact that $\NN$ has a unique minimal nonzero orbit, and that $V_{u_{\zeta}(\fg )}(M) = \varnothing$ if and only if $M$ is projective. 

Furthermore, note that $\mathcal{P}$ is contained in every nonzero thick tensor ideal $I$ of  $\mod (U_{\zeta}(\fg ))$.  Namely, given any nonzero $M \in I$ and nonzero $P \in \mathcal{P}$, $M \otimes P \in I \cap \mathcal{P}$ and thus $I$ contains nonzero projective modules.  Thus it suffices to show that any nonzero $Q$ in $\mathcal{P}$ generates $\mathcal{P}$ as a thick tensor ideal.  If we write $\text{ev} : Q^{*} \otimes Q \to \C$ for the evaluation homomorphism, then there is a surjective homomorphism $\text{ev} \otimes 1: Q^{*}\otimes Q \otimes P \to P$ for any $P \in \mathcal{P}$.  Since $P$ is projective this map splits and so $P$ is isomorphic to a direct summand of $Q^{*}\otimes Q \otimes P$ and thus every projective module is contained in the thick tensor ideal generated by $Q$ as needed.

We now observe that there is a one-to-one correspondence between the nonzero thick tensor ideals of  $\mod (U_{\zeta}(\fg ))$ and the thick tensor ideals of $\bK^{c}= \Stab \left(\mod (U_{\zeta}(\fg )) \right)$.  Namely, given a nonzero thick tensor ideal $I$ of $\mod (U_{\zeta}(\fg ))$, let $I'$ denote the full subcategory of $\bK^{c}$ whose objects are those of $I$.  Using that $\mathcal{P} \subseteq I$ and the defining properties of a thick tensor ideal in $\mod (U_{\zeta}(\fg ))$ it follows that $I'$ is a thick tensor ideal of $\bK^{c}$.  Conversely, given a thick tensor ideal, $J'$, of $\bK^{c}$, let $J$ be the full subcategory of $\mod (U_{\zeta}(\fg ))$ whose objects are the objects of $J'$. This forms a nonzero thick tensor ideal in $\mod (U_{\zeta}(\fg))$.  These provide mutually inverse, order preserving bijections between the nonzero thick tensor ideals of $\mod (U_{\zeta}(\fg ))$ and the thick tensor ideals of of $\bK^{c}$.   Combining this with the discussion at the beginning of this section and the bijection between the closed subsets of $G$-$\Proj (\C [\NN ])$ and the nonzero $G$-stable closed subsets of $\NN$ given by $\rho$ yields the first correspondence, after modifying the bijection slightly by sending $\mathcal P$ to $\{0\}\subset \NN$ (instead of to $\varnothing$), and extending it by sending the zero ideal to $\varnothing \subset \NN$.

 Moreover, the above bijection between the nonzero thick tensor ideals of $\mod (U_{\zeta}(\fg ))$ and the thick tensor ideals of $\bK^{c}$ restricts to a bijection between the set of prime ideals of $\bK^{c}$ and the set of nonzero prime ideals of $\mod (U_{\zeta}(\fg ))$.  That is, there is a bijection between the nonzero prime ideals of $\mod (U_{\zeta}(\fg ))$ and the irreducible $G$-stable closed subsets of $\NN$ and, hence, with the nilpotent $G$-orbits in $\NN$. 
\end{proof} 

\section{Connections with Tilting Modules} \label{S:tilting}

\subsection{} Let $\bT$ be the category of finite-dimensional tilting modules for the braided monoidal tensor category $\mod(U_{\zeta}(\fg))$.  As it is closed under the tensor product and contains the unit object, $\bT$ is itself a braided monoidal tensor category. Write $T(\lambda)$ for the indecomposable tilting module of highest weight $\lambda\in X_{+}$, and  $\Tensor_{\bT}(T)$ for the thick tensor ideal in $\bT$ generated by a tilting module $T$. Ostrik provided a classification of the thick tensor ideals in $\bT$. Combined with Bezrukavnikov's work on the support varieties of tilting modules we can state the following result. 

\begin{theorem} \cite{Ost, Bez} \label{T:Ostrikclass} Let $\lambda,\mu\in X_{+}$. 
\begin{itemize} 
\item[(a)] There exist bijective correspondences
$$
\{\operatorname{Tensor}_{\bT}(T(\lambda))\mid \lambda\in X_{+}\}  \begin{array}{c} \gf{\phantom{\Gamma}}{\longrightarrow} \\ \gf{\longleftarrow}{\phantom{\Theta}} \end{array} 
\{\text{nilpotent orbits of $G$}\} \begin{array}{c} \gf{\phantom{\Gamma}}{\longrightarrow} \\ \gf{\longleftarrow}{\phantom{\Theta}} \end{array} \{\text{two-sided cells of $W_{\ell}$} \}.  
$$
\item[(b)] $T(\mu)\in \operatorname{Tensor}_{\bT}(T(\lambda))$ if and only if $V(T(\mu))\subseteq V(T(\lambda))$. 
\end{itemize} 
\end{theorem}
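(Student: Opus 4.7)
The plan is to assemble the two parts by combining three already-established inputs: Ostrik's classification of thick tensor ideals in $\bT$, Lusztig's bijection between two-sided cells of $W_{\ell}$ and nilpotent $G$-orbits in $\NN$, and the support variety computation of \cref{P:orbitrealization} (which extends Bezrukavnikov's relative result to absolute support varieties). None of these ingredients need to be re-proved; the task is to align them correctly.

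For (a), the first step is to invoke Ostrik's theorem from \cite{Ost}. It states that the thick tensor ideals of $\bT$ are governed by the poset of two-sided cells of $W_{\ell}$ and, in particular, the assignment $\lambda \mapsto \Tensor_{\bT}(T(\lambda))$ depends only on the two-sided cell $\mathbf{c}(\lambda)$ containing $\lambda$, yielding a bijection between the set of principal tensor ideals and the set of two-sided cells. The second bijection, between two-sided cells and nilpotent orbits in $\NN$, is Lusztig's theorem. Composing the two produces the chain of bijections asserted in (a).

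For (b), the forward implication is immediate from the axioms of a quasi support datum: if $T(\mu)\in \Tensor_{\bT}(T(\lambda))$, then $T(\mu)$ is obtained from $T(\lambda)$ by a finite sequence of shifts, triangle completions, tensoring with arbitrary tilting modules, and taking summands, and properties (\ref{SS:supportdata}.\ref{E:supporttwo})--(\ref{SS:supportdata}.\ref{E:supportfive}) force $V(T(\mu))\subseteq V(T(\lambda))$. For the converse, \cref{P:orbitrealization} identifies $V(T(\lambda))$ with the closure $\overline{\OO_{\lambda}}$ of the nilpotent orbit attached to $\lambda$ via the composite bijection of part (a), so that $V(T(\mu))\subseteq V(T(\lambda))$ translates into $\overline{\OO_{\mu}}\subseteq \overline{\OO_{\lambda}}$. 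Applying the order-preserving refinement of part (a) (which is inherent in Ostrik's theorem, since his bijection is actually between the poset of thick tensor ideals of $\bT$ and the poset of two-sided-order-ideals in the set of cells) then forces $\Tensor_{\bT}(T(\mu))\subseteq \Tensor_{\bT}(T(\lambda))$, whence $T(\mu)\in \Tensor_{\bT}(T(\lambda))$.

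The main technical obstacle is compatibility: one must verify that the orbit assigned to $\Tensor_{\bT}(T(\lambda))$ via Ostrik composed with Lusztig agrees with the orbit computed from the support variety $V(T(\lambda))$ via \cref{P:orbitrealization}. In that proposition the identification is established only for regular highest weights of the form $w\cdot 0$ in the principal block; to extend it to an arbitrary dominant $\lambda$ one should reduce via linkage and the Steinberg tensor product theorem to the principal block, and then transport support variety information off the walls using the translation functor estimates (through the wall-crossing functor $T^{0}_{\mu}T^{\mu}_{0}$ and the inclusion $V^{\max}(T^{0}_{\mu}T^{\mu}_{0}(M),N)\subseteq V^{\max}(M,N)$) already developed inside the proof of \cref{P:orbitrealization}. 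This bookkeeping, rather than any new triangulated-category input, is where the real work lies.
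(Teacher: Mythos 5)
The paper gives no proof of this theorem: it is stated as a quoted result, attributed to Ostrik \cite{Ost} (for the classification of thick tensor ideals in $\bT$ by two-sided cells, hence by nilpotent orbits via Lusztig's bijection) and to Bezrukavnikov \cite{Bez} (for the identification of the support of $T(\lambda)$ with the orbit closure attached to its cell). Your assembly of exactly these ingredients, together with \cref{P:orbitrealization}, is therefore the intended argument, and parts (a) and the forward implication of (b) are handled correctly.

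One caveat on your treatment of the converse in (b). You propose to extend the identification $V(T(\lambda))=\overline{\OO_{\lambda}}$ from the regular principal-block case of \cref{P:orbitrealization} to arbitrary dominant $\lambda$ by ``transporting support variety information off the walls'' with the wall-crossing estimates from that proof. Those estimates only give one-sided inclusions of the form $V^{\max}(T^{0}_{\mu}T^{\mu}_{0}(M),N)\subseteq V^{\max}(M,N)$, so by themselves they cannot produce the equality you need. The cleaner route --- and the one the paper itself indicates in the remark following \cref{P:orbitrealization} --- is to use part (a) (Ostrik: $\Tensor_{\bT}(T(\lambda))$ depends only on the cell of $\lambda$, so $T(\lambda)$ and a suitable $T(w\cdot 0)$ generate the same ideal and each lies in the ideal generated by the other) together with the already-established forward implication of (b) to conclude $V(T(\lambda))=V(T(w\cdot 0))=\overline{\OO}$. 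With that substitution your argument closes; note also that the step from $\overline{\OO_{\mu}}\subseteq\overline{\OO_{\lambda}}$ back to containment of ideals uses that Ostrik's bijection is order-preserving for the cell order and that the cell order matches the closure order on orbits, which is part of what \cite{Ost,Bez} supply.
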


\subsection{} It is interesting to compare Ostrik's classification with our results presented in Corollary~\ref{C:thickprime}. Indeed, the 
following results indicates that $\bR:=\mod(U_{\zeta}(\fg))$ is an ``integral extension" of $\bT$. 

\begin{theorem} Let $\lambda,\mu\in X_{+}$. Then 
\begin{itemize} 
\item[(a)] $\operatorname{Tensor}_{\bT}(T(\lambda))=\operatorname{Tensor}_{\bR}(T(\lambda))\cap \bT$.
\item[(b)] If $I$ is a thick prime tensor ideal in $\bR$, then $I=\operatorname{Tensor}_{\bR}(T(\lambda))$ for some $\lambda\in X_{+}$. 
\item[(c)] $\operatorname{Tensor}_{\bR}(T(\lambda))=\operatorname{Tensor}_{\bR}(T(\mu))$ if and only if 
$V(T(\lambda))=V(T(\mu))$. 
\end{itemize} 
\end{theorem}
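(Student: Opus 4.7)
The plan is to reduce all three parts to the classification in Corollary~\ref{C:thickprime}, together with Theorem~\ref{T:Ostrikclass} for part (a) and Proposition~\ref{P:orbitrealization} for part (b). First I would establish the key preliminary observation that for any module $M \in \bR$, the thick tensor ideal $\operatorname{Tensor}_{\bR}(M)$ corresponds under the bijection of Corollary~\ref{C:thickprime} to the $G$-stable closed subset of $\NN$ identified with $V(M)$. This follows directly from the support data axioms (\ref{SS:supportdata}.\ref{E:supportone})--(\ref{SS:supportdata}.\ref{E:supportfive}), which ensure that every object of $\operatorname{Tensor}_{\bR}(M)$ has support contained in $V(M)$, while $M$ itself (an element of the ideal) contributes the reverse containment.

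Given this preliminary, part (c) is immediate: under the correspondence just described, $\operatorname{Tensor}_{\bR}(T(\lambda)) = \operatorname{Tensor}_{\bR}(T(\mu))$ if and only if the associated $G$-stable closed subsets of $\NN$ coincide, which is the condition $V(T(\lambda)) = V(T(\mu))$.

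For part (a), the inclusion $\operatorname{Tensor}_{\bT}(T(\lambda)) \subseteq \operatorname{Tensor}_{\bR}(T(\lambda)) \cap \bT$ is essentially definitional: each generating operation for $\operatorname{Tensor}_{\bT}(T(\lambda))$ stays inside $\bT$ and inside the larger thick tensor ideal generated by $T(\lambda)$ in $\bR$. For the reverse inclusion, I would take any $T \in \operatorname{Tensor}_{\bR}(T(\lambda)) \cap \bT$, decompose it into its indecomposable tilting summands $T = \bigoplus_i T(\mu_i)$, and use thickness to conclude that each $T(\mu_i) \in \operatorname{Tensor}_{\bR}(T(\lambda))$, whence $V(T(\mu_i)) \subseteq V(T(\lambda))$ by the preliminary observation. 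Theorem~\ref{T:Ostrikclass}(b) then places each $T(\mu_i)$, and thus $T$ itself, inside $\operatorname{Tensor}_{\bT}(T(\lambda))$.

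For part (b), let $I$ be a nonzero thick prime tensor ideal of $\bR$. By Corollary~\ref{C:thickprime}, $I$ corresponds to a unique nilpotent $G$-orbit $\OO \subseteq \NN$, equivalently to the irreducible $G$-stable closed subset $\overline{\OO}$. Proposition~\ref{P:orbitrealization} produces a tilting module $T(w \cdot 0)$ whose (absolute) support variety equals $\overline{\OO}$. Invoking the preliminary observation once more, $\operatorname{Tensor}_{\bR}(T(w \cdot 0))$ corresponds to $\overline{\OO}$ under the classification bijection, and hence equals $I$. The main obstacle here is precisely the realization step --- that every orbit closure $\overline{\OO}$ arises as the support variety of an indecomposable tilting module --- but this is already handled in the paper by Proposition~\ref{P:orbitrealization}, which upgrades Bezrukavnikov's relative support computation to the absolute setting via translation and wall-crossing functors.
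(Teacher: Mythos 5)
Your proposal is correct and follows essentially the same route as the paper: the containment $V(T(\mu))\subseteq V(T(\lambda))$ extracted from the classification (Hopkins' theorem via Theorem~\ref{T:classification} and Corollary~\ref{C:thickprime}) combined with Theorem~\ref{T:Ostrikclass}(b) for part (a), and the classification plus Proposition~\ref{P:orbitrealization} for parts (b) and (c). Your write-up is in fact slightly more careful than the paper's (decomposing a general tilting object into indecomposable summands in (a), and explicitly restricting to nonzero prime ideals in (b), which the paper leaves implicit).
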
 

\begin{proof}  For part (a), 
by using the definitions, it is clear that 
$$\operatorname{Tensor}_{\bT}(T(\lambda))\subseteq \operatorname{Tensor}_{\bR}(T(\lambda))\cap \bT.$$ 
Now suppose that $T(\mu)\in \operatorname{Tensor}_{\bR}(T(\lambda))\cap \bT$. Then 
$V(T(\mu))\subseteq V(T(\lambda))$. Now one can apply Theorem~\ref{T:Ostrikclass}(b) to see that 
$T(\mu)\in \operatorname{Tensor}_{\bT}(T(\lambda))$. 

Parts (b) and (c) are consequences of part (a), Theorem~\ref{T:classification}, Corollary~\ref{C:thickprime}, and \cref{T:Ostrikclass}.
\end{proof}

\section{Appendix: Graded Algebras and Modules} \label{S:graded}

\subsection{} \label{S:gradings} Let $A$ be a finite-dimensional algebra with an increasing filtration: 
$$A_{0}\subseteq A_{1} \subseteq \dots   \subseteq A_{N}=A$$ 
which is multiplicative (i.e., $A_{i}A_{j}\subseteq A_{i+j}$). Moreover, let 
$$\gr A=A_{0}\oplus A_{1}/A_{0} \oplus \dots \oplus A_{N}/A_{N-1}$$ 
be the associated graded algebra. 

If $M$ is an $A$-module then one can construct $\gr M$ which is a 
$\gr A$-module as follows. Fix a set of generators $\{m_{i}\mid i\in I \}$ of $M$ and set $M_{j}=\sum_{i\in I}A_{j}m_{i}$. One 
obtains an increasing filtration 
$$M_{0}\subseteq M_{1}\subseteq \dots \subseteq M_{N}=M.$$ 
with the property that $A_{i}\cdot M_{j}\subseteq M_{i+j}$. Set  
$$\gr M=M_{0}\oplus M_{1}/M_{0} \oplus \dots \oplus M_{N}/M_{N-1}$$ 
which is a $\gr A$-module. 

\subsection{}\label{SS:associatedgraded} Now let $A$ and $A^{\prime}$ be finite-dimensional algebras with multiplicative filtrations given by 
$A_{0}\subseteq A_{1}\subseteq \dots \subseteq A_{N}=A$ and $A^{\prime}_{0}\subseteq A^{\prime}_{1}\subseteq \dots \subseteq  A^{\prime}_{N^{\prime}}=A^{\prime}$. 
Let $B=A\otimes A^{\prime}$ and $B_{t}=\sum_{i+j=t} A_{i}\otimes A^{\prime}_{j}$. Then we have a multiplicative filtration $B_{0}\subseteq B_{1}\subseteq \dots   \subseteq B_{N+N^{\prime}}=B$ 
for the algebra $B$. One can consider the associated graded algebra $\gr B$. 

The multiplication on $\gr B$ (i.e., $B_{s}/B_{s-1}\times B_{t}/B_{t-1} \rightarrow B_{s+t}/B_{s+t-1}$) is given by the formula 
$$[(x \otimes x^{\prime} )+B_{s-1}][(y\otimes y^{\prime})+B_{t-1}]=(xy\otimes x^{\prime}y^{\prime})+B_{s+t-1}.$$ 
Suppose we have $x\in A_{i}$, $y\in A_{j}$ with $i+j=s$ and $x^{\prime}\in A_{i^{\prime}}$, $y\in A_{j^{\prime}}$ with $i^{\prime}+j^{\prime}=t$. 
Then replacing $x$ by another element in $A_{i}$ modulo $A_{i-1}$ does not change the product. The analogous statement is true for $y$, $x^{\prime}$ and $y^{\prime}$. 
Therefore, one has a canonical isomorphism 
$$\gr (A\otimes A^{\prime})\cong \gr A \otimes \gr A^{\prime}.$$ 
Similarly, if $M$ (resp.\ $M^{\prime}$) is an $A$-module (resp.\ $A^{\prime}$-module), using the associated filtrations one obtains 
\begin{equation}\label{E:tensorprodgrad}
\gr (M\boxtimes M^{\prime})\cong \gr M\boxtimes \gr M^{\prime}
\end{equation} 
as $\gr A \otimes \gr A^{\prime}$-modules, where $\boxtimes$ denotes the outer tensor product. 

Let $C$ be a subalgebra of $B$. The multiplicative filtration on $B$ induces a multiplicative filtration on $C$ by letting $C_{i}=C\cap B_{i}$ for all $i$. One 
can consider the associated graded algebras, and there is an injection of subalgebras: $\gr C\leq \gr B$. 

\subsection{Spectral Sequences} Let $A$ be a finite-dimensional augmented algebra with augmentation ideal $A_{+}$. 
Following the construction in \cite[Lemma 5.6.1]{BNPP} and \cite{Bajer}, let $A_{0}\subseteq A_{1}\subseteq \dots \subseteq A_{N}=A$ be an increasing filtration on $A$, $M$ be 
an $A$-module with induced filtration $M_{0}\subseteq M_{1}\subseteq \dots \subseteq M_{N}=M$. 

Let $C^{\bullet}(A,M)$ and $C^{\bullet}(\gr  A, \gr M)$ be the complexes obtained by taking
duals of the respective reduced bar resolutions. That is 
$C^{n}(A,M)=\Hom_\C ((A_{+})^{\otimes n}\otimes M, \C )$
and  $C^{n}(\gr A, \gr M)=\Hom_\C ((\gr A_{+})^{\otimes n}\otimes \gr M, \C )$. 
Set ${A_{+}}_{j}=A_{j}\cap A_{+}$. 

The filtrations above induce a downward filtration on the complex $C^{\bullet}(A,M)$ as follows. Define 
\begin{gather*}
B^{n}_{[< t ]}=\sum_{ \eta+\sum\gamma_{i} < t} {A_{+}}_{\gamma_1}
\otimes {A_{+}}_{\gamma_2} \otimes \cdots \otimes
{A_{+}}_{\gamma_n} \otimes M_{\eta},\\ 
B^{n}_{[\leq t]}=\sum_{
\eta+\sum\gamma_{i}  \leq t} {A_{+}}_{\gamma_1} \otimes
{A_{+}}_{\gamma_2} \otimes\cdots \otimes {A_{+}}_{\gamma_n}\otimes M_{\eta}.
\end{gather*}
Now observe that $B^{n}_{[<t]}\subseteq B^{n}_{[\leq t]}$. Set 
$$C^{n}(A,M)_{[<t]}=\Hom_\C ((A_{+}^{\otimes n}\otimes M)/B^{n}_{[<t]},\C ),\ \ 
C^{n}(A,M)_{[\leq t]}=\Hom_\C ((A_{+}^{\otimes n}\otimes M)/B^{n}_{[\leq t]},\C );$$ then
$C^{n}(A,M)_{[\leq t]}\subseteq C^{n}(A,M)_{[< t ]}$. Moreover, if $s,t \in {\mathbb N}$ with $s<t $ then ${C^{n}(A,M)_{[< t]}}\subseteq {C^{n}(A,M)_{[< s ]}}.$

The grading on $\gr A$ leads in a natural way to a grading of the complex $C^{\bullet}(\gr  A, \gr M)$. 
Let $C^{\bullet}(\gr  A, \gr  M)_{[t]}$ denote the graded component corresponding to $t$; then we can identify
${C^\bullet(A,M)_{[<t]}}/{C^\bullet(A,M)_{[\leq t ]}}$ with ${C^{\bullet}(\gr  A, \gr (M))_{[t]}}.$ From these filtrations on the cobar complexes we obtain the following result. 

\begin{prop} \label{P:Mayspectral}  Let $A$ be a finite-dimensional augmented algebra with a multiplicative filtration $A_{0}\subseteq A_{1}\subseteq \dots \subseteq A_{N}=A$, and let 
$M$ be an $A$-module. There exists a spectral sequence 
$$E_{1}^{i,j}(M)=\operatorname{Ext}^{i+j}_{\operatorname{gr}A}(\C ,\operatorname{gr }M)_{[i]}\Rightarrow \operatorname{Ext}^{i+j}_{A}(\C ,M).$$ 
Furthermore, for each $r$, $E_{r}^{\bullet,\bullet}(M)$ is a differential graded module over the differential graded algebra $E_{r}^{\bullet, \bullet}(\C )$.  
\end{prop}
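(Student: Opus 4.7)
The plan is to construct the spectral sequence from the decreasing filtration $\{C^{\bullet}(A,M)_{[<t]}\}_{t \geq 0}$ on the reduced cobar complex introduced just above the proposition. Recall that $C^{\bullet}(A,M)$ computes $\Ext^{\bullet}_{A}(\C, M)$, so it suffices to produce the spectral sequence from a suitable filtration by subcomplexes. First I would check that each $C^{\bullet}(A,M)_{[<t]}$ is preserved by the cobar differential; this is a direct bookkeeping argument using the multiplicativity $A_{i}A_{j} \subseteq A_{i+j}$ and the compatibility $A_{i} \cdot M_{j} \subseteq M_{i+j}$ of the induced filtration on $M$. Since $A$ is finite-dimensional and the filtration on $M$ has length at most $N$, this yields a bounded decreasing filtration of complexes on each $C^{n}(A,M)$.

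Next I would apply the standard machinery of filtered chain complexes to obtain a strongly convergent spectral sequence with $E_{0}^{i,j}$ equal to the $i$-th graded piece in total cohomological degree $i+j$. By the identification $C^{n}(A,M)_{[<t]}/C^{n}(A,M)_{[\leq t]} \cong C^{n}(\gr A, \gr M)_{[t]}$ recorded in the paragraph preceding the proposition, this $E_{0}$ page is canonically $E_{0}^{i,j} = C^{i+j}(\gr A, \gr M)_{[i]}$, with differential equal to the cobar differential of $\gr A$ restricted to the internal-degree-$i$ piece (the ``lower order'' correction terms are exactly what has been killed in passing to the associated graded). Taking cohomology then yields $E_{1}^{i,j} = \Ext^{i+j}_{\gr A}(\C, \gr M)_{[i]}$, abutting to $\Ext^{i+j}_{A}(\C, M)$ as claimed.

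For the multiplicative statement, the reduced cobar complex carries a natural cup product $C^{p}(A,\C) \otimes C^{q}(A,M) \to C^{p+q}(A,M)$ which makes $C^{\bullet}(A,\C)$ into a DG algebra and $C^{\bullet}(A,M)$ into a DG module over it, refining the Yoneda product on $\Ext^{\bullet}_{A}(\C,-)$. I would verify directly from the cup-product formula that this operation sends $C^{\bullet}(A,\C)_{[<s]} \otimes C^{\bullet}(A,M)_{[<t]}$ into $C^{\bullet}(A,M)_{[<s+t]}$; this is the cochain-level manifestation of the multiplicativity of the filtration on $A$ and the $A$-compatibility of the filtration on $M$. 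By the general theory of multiplicatively filtered DG modules, this pairing descends to a pairing on associated gradeds and, inductively, to a DG module structure of $E_{r}^{\bullet,\bullet}(M)$ over $E_{r}^{\bullet,\bullet}(\C)$ satisfying the Leibniz rule.

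The hard part is the careful verification that the cup product respects the filtration on the nose (not merely up to chain homotopy), so that it induces the desired structure at every page; with the filtration as defined this reduces to checking the explicit formula for the cup product against the definition of $B^{n}_{[<t]}$. Once this is done, convergence and the compatibility of the multiplicative structure with all the $d_{r}$ follow by standard spectral sequence formalism, giving exactly the May-type spectral sequence asserted in the proposition.
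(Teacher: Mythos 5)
Your proposal is correct and follows exactly the route the paper takes: the paper's entire argument consists of the filtration on the reduced cobar complex constructed in the paragraphs preceding the proposition (following \cite[Lemma 5.6.1]{BNPP} and \cite{Bajer}), the identification of the associated graded with the cobar complex of $\gr A$ with coefficients in $\gr M$, and the standard spectral sequence of a bounded filtered complex, with the multiplicative structure coming from the filtration-compatible cup product. Your write-up simply makes explicit the verifications the paper leaves implicit.
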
 

\subsection{} In this section, let $A=u_{\zeta}({\mathfrak b})$ and $\bar{A}=\gr A$. The following theorem can be regarded as an adjointness statement 
between extension groups for $\bar{A}$ and $\Delta$. 

\begin{theorem} \label{thm:injectivemap}  Let $Q_{1}$ be a finite-dimensional module for $\bar{A}$ and $Q_{2}$ be an arbitrary $\bar{A}$-module. For $n\geq 0$ there exists 
an isomorphism 
$$\phi_{n}: \operatorname{Ext}^{n}_{\bar{A}}(Q_{1}^{*},Q_{2})\rightarrow \operatorname{Ext}^{n}_{\Delta}(\C ,Q_{1}\otimes Q_{2}).$$ 
\end{theorem}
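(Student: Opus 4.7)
The plan is to establish the isomorphism first in degree zero and then extend it to all degrees via a dualized projective resolution, ultimately reducing the problem to an acyclicity statement about tensor products of $\bar{A}$-modules over $\Delta$.

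For $n = 0$, the natural vector space isomorphism $\Hom_{\C}(Q_1^*, Q_2) \cong Q_1 \otimes Q_2$ (valid because $Q_1$ is finite-dimensional) should identify, on the left, $\bar{A}$-module homomorphisms and, on the right, $\Delta$-invariant elements. Concretely, the diagonal action of $\Delta \subseteq \bar{A} \otimes \bar{A}$ on $Q_1 \otimes Q_2$ corresponds under the identification to the conjugation-style $\bar{A}$-action on $\Hom_{\C}(Q_1^*, Q_2)$ constructed from the anti-automorphism $\bar{S}$ on $\bar{A}$ (which exists by the remarks at the end of \cref{SS:TensorAbarmodules}) together with the module structures on $Q_1^*$ and $Q_2$. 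A direct check verifies that a map $f \in \Hom_{\C}(Q_1^*, Q_2)$ is $\bar{A}$-linear precisely when its image in $Q_1 \otimes Q_2$ is $\Delta$-invariant, producing $\phi_0$, which is evidently natural.

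For $n \geq 1$, I would take a projective resolution $P_\bullet \to Q_1^*$ in $\mod(\bar{A})$ consisting of finite-dimensional modules (available since $\bar{A}$ is itself finite-dimensional). Because $\bar{A}$ is self-injective, each $P_i$ is simultaneously injective; applying the $\bar{S}$-twisted duality $(-)^*$ yields a complex $Q_1 \to P_\bullet^*$ that is both an injective resolution in $\Mod(\bar{A})$ and termwise projective. Tensoring with $Q_2$ is exact on vector spaces, so $Q_1 \otimes Q_2 \to P_\bullet^* \otimes Q_2$ is an exact complex in $\Mod(\Delta)$. Applying $\Hom_{\Delta}(\C, -)$ termwise, the $n = 0$ case furnishes a natural identification $\Hom_{\Delta}(\C, P_n^* \otimes Q_2) \cong \Hom_{\bar{A}}(P_n, Q_2)$, turning the resulting complex into $\Hom_{\bar{A}}(P_\bullet, Q_2)$, whose $n$-th cohomology is $\Ext^n_{\bar{A}}(Q_1^*, Q_2)$. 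To conclude that this also computes $\Ext^n_{\Delta}(\C, Q_1 \otimes Q_2)$, one needs each $P_n^* \otimes Q_2$ to be acyclic for $\Hom_{\Delta}(\C, -)$.

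The main obstacle lies precisely in verifying this last acyclicity. By additivity and the Frobenius property of $\bar{A}$, it reduces to showing that $\bar{A} \otimes Q_2$ is $\Hom_{\Delta}(\C, -)$-acyclic --- ideally projective, even free --- as a $\Delta$-module, for arbitrary $\bar{A}$-module $Q_2$. The model is the classical Hopf-algebra fact that $A \otimes M$ is $\Delta(A)$-free for every $A$-module $M$, the freeness being supplied by the ``untwisting'' isomorphism $a \otimes m \mapsto \sum a_{(1)} \otimes S(a_{(2)}) m$. In our filtered setting the analogous graded statement demands care: the antipode on $A$ descends to $\bar{S}$ on $\bar{A}$, the image $\Delta(A)$ of the coproduct has associated graded $\Delta \subseteq \bar{A} \otimes \bar{A}$, and the untwisting isomorphism must be shown to descend coherently. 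This is a direct, if somewhat delicate, computation using compatible lifts of elements from $\bar{A}$ to $A$ and the fact that both the antipode and the coproduct of $A$ preserve the filtration of \cref{SS:grb}. Once this key acyclicity is established, the composite chain of natural isomorphisms yields the desired $\phi_n$ for all $n \geq 0$.
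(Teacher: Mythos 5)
Your overall skeleton (establish the degree-zero isomorphism, then compute both sides from a dualized projective resolution whose terms are acyclic for $\Hom_{\Delta}(\C,-)$) is a legitimate strategy, but as written it rests entirely on two claims that are precisely the hard content of the theorem, and neither is actually proved. The first gap is the $n=0$ step. You treat the identification of $\Hom_{\bar{A}}(Q_{1}^{*},Q_{2})$ with the $\Delta$-invariants of $Q_{1}\otimes Q_{2}$ as ``a direct check,'' importing the usual Hopf-algebra tensor--hom adjunction. But $\Delta$ here is \emph{not} the image of a coproduct on $\bar{A}$: it is $\gr\Delta(u_{\zeta}(\fb))$, the associated graded of the image of the coproduct of $A$, computed with respect to the filtration induced from $A\otimes A$ (which differs from the filtration transported from $A$). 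There is no asserted Hopf structure on $\bar{A}$ with antipode $\bar S$ and coproduct landing in $\Delta$ satisfying the antipode axioms, so the correspondence between $\bar{A}$-linearity of $f\in\Hom_{\C}(Q_{1}^{*},Q_{2})$ and $\Delta$-invariance of the corresponding tensor is not automatic; one would at minimum need to know that $\Delta$ is generated by the leading terms of $\Delta(E_{\alpha})$, $\Delta(K_{i})$ and that the graded antipode identity holds for them. This is exactly why the paper's proof devotes its bulk (steps (1), (2), (3), (6)) to the degree-zero case: it verifies the isomorphism directly only for one-dimensional $Q_{1}$ (by comparing $\bar{A}'$-fixed points and weights), and then bootstraps to general finite-dimensional $Q_{1}$ by induction on $\dim Q_{1}$ with the five lemma, proving injectivity and surjectivity separately.

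The second gap is the acyclicity of $P_{n}^{*}\otimes Q_{2}$ for $\Hom_{\Delta}(\C,-)$, which you correctly identify as the crux but then defer as ``a direct, if somewhat delicate, computation.'' The classical untwisting isomorphism $a\otimes m\mapsto\sum a_{(1)}\otimes S(a_{(2)})m$ lives on $A\otimes M$ for an honest Hopf algebra $A$; descending it to $\bar{A}\otimes Q_{2}$ over $\Delta$ requires controlling the filtration degrees of all components of $\Delta(F_{\overline{a}})$ for PBW monomials (not just generators), requires $Q_{2}$ to be of the form $\gr M'$, and runs into the same two-filtrations issue as above. Nothing in the paper establishes that $\bar{A}\otimes Q_{2}$ is $\Delta$-free, and the paper's proof is structured specifically to avoid needing it: the maps $\phi_{n}$ are built by tensoring $n$-fold extension classes with $Q_{1}$ and pulling back along $\C\hookrightarrow Q_{1}\otimes Q_{1}^{*}$, injectivity is obtained by dimension shifting against injective hulls of $Q_{1}$ (steps (4)--(5)), and surjectivity by dimension shifting against injective resolutions of $Q_{2}$ (step (7)). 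Until you supply proofs of both the degree-zero adjunction and the $\Delta$-acyclicity, the proposal is an outline rather than a proof.
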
 

\begin{proof} First we need to construct the maps $\phi_{n}$. Let $\zeta$ be an element of $\Ext^{n}_{\bar{A}}(Q_{1}^{*},Q_{2})$ which, when $n>0$, 
is represented by an $n$-fold $\bar{A}$-extension: 
$$0\rightarrow Q_{2} \rightarrow T_{1} \rightarrow T_{2} \rightarrow \dots \rightarrow T_{n} \rightarrow Q_{1}^{*} \rightarrow 0.$$
By tensoring with $Q_{1}$, one gets an $n$-fold $\bar{A}\otimes \bar{A}$-extension that one can regard as a $\Delta$-extension via restriction: 
$$0\rightarrow Q_{1}\otimes Q_{2} \rightarrow Q_{1}\otimes T_{1} \rightarrow Q_{1} \otimes T_{2} \rightarrow \dots \rightarrow Q_{1}\otimes T_{n} \rightarrow Q_{1}\otimes Q_{1}^{*}\rightarrow 0.$$
This gives us a map $\widetilde{\phi}_{n}:\operatorname{Ext}^{n}_{\bar{A}}(Q_{1}^{*},Q_{2})\rightarrow \operatorname{Ext}^{n}_{\Delta}(Q_{1}\otimes Q_{1}^{*},Q_{1}\otimes Q_{2})$.   
We have a $\Delta$-map ${\mathbb C}\hookrightarrow Q_{1}\otimes Q_{1}^{*}$. This induces a map on extension groups
$$\widehat{\phi}_{n}:\operatorname{Ext}^{n}_{\Delta}(Q_{1}\otimes Q_{1}^{*},Q_{1}\otimes Q_{2})\rightarrow \operatorname{Ext}^{n}_{\Delta}(\C ,Q_{1}\otimes Q_{2}).$$ 
Now one can set $\phi_{n}=\widehat{\phi}_{n} \circ \widetilde{\phi}_{n}$. When $n=0$, $\zeta\in\Hom_{\bar A}(Q_{1}^{*},Q_{2})$, and we can define $\phi_{0}(\zeta)$ to be the composition of ${\mathbb C}\hookrightarrow Q_{1}\otimes Q_{1}^{*}$ followed by $1\otimes\zeta:Q_{1}\otimes Q_{1}^{*}\to Q_{1}\otimes Q_{2}$.

The proof that $\phi_{n}$ is injective will follow via the first five steps, below. Surjectivity will be handled with two further steps.

\vskip .5cm 
\noindent
(1) We claim that for $Q_{1}\cong \lambda$, 
$$\phi_{0}:\Hom_{\bar{A}}(\lambda^{*},Q_{2})\rightarrow \Hom_{\Delta}(\C ,\lambda\otimes Q_{2}).$$ 
is an isomorphism. 

Consider $A^{\prime}=u_{\zeta}({\mathfrak u})$ and $\bar{A}^{\prime}=\gr A^{\prime}$. Note that $\bar{A}^{\prime}$ acts trivially 
on $\lambda$. By comparing the action of $\bar{A}$ on $Q_{2}$ and $\Delta$ on $\lambda\otimes Q_{2}$, one can see that the fixed points under $A^{\prime}$ 
are the same. The result now follows by considering the weights on the set of fixed points. 

\vskip .5cm 
\noindent
(2) Next we prove that for $Q_{1}$ a finite-dimensional $\bar{A}$-module, 
$$\phi_{0}:\Hom_{\bar{A}}(Q_{1}^{*},Q_{2})\rightarrow \Hom_{\Delta}(\C ,Q_{1}\otimes Q_{2}).$$ is injective. 

Consider the following  diagram with exact rows, induced by the short exact sequences
$$0\rightarrow \lambda\rightarrow Q_{1} \rightarrow N \rightarrow 0 \qquad \text{ and } \qquad 
0\rightarrow N^{*} \rightarrow Q_{1}^{*} \rightarrow \lambda^{*} \rightarrow 0:$$ 

$$
\CD
0@>>> \Hom_{\bar{A}}(\lambda^{*},Q_{2}) @>>> \Hom_{\bar{A}}(Q_{1}^{*},Q_{2}) @>\tau_{1}>> \Hom_{\bar{A}}(N^{*},Q_{2})   \\
@.       @V\phi_{0}^{\prime}VV @V\phi_{0}VV  @V\phi_{0}^{\prime \prime}VV        \\
0@>>> \Hom_{\Delta}(\C ,\lambda\otimes Q_{2}) @>>> \Hom_{\Delta}(\C ,Q_{1}\otimes Q_{2}) @>\tau_{2}>> \Hom_{\Delta}({\mathbb C},N\otimes Q_{2})   \\
\endCD
$$
One can verify directly that the diagram commutes, and hence $\phi_{0}^{\prime \prime}(\operatorname{Im}\tau_{1})\subseteq \operatorname{Im}\tau_{2}$. Replacing the last term in row $i$ ($i=1, 2$) by $\operatorname{Im}\tau_{i}\to 0$ produces a new commutative diagram whose rows are short exact sequences. Since $\phi_{0}^{\prime}$ and $\phi_{0}^{\prime \prime}$ are monomorphisms by induction on $\dim Q_{1}$, it follows by the five lemma that 
$\phi_{0}$ is a monomorphism. 

\vskip .5cm 
\noindent 
(3) We claim that if $Q_{1}$ is injective as ${\bar{A}}$-module then 
$$\phi_{0}:\Hom_{\bar{A}}(Q_{1}^{*},Q_{2})\rightarrow \Hom_{\Delta}(\C ,Q_{1}\otimes Q_{2}).$$ 
is an isomorphism. 

From (2), we know that $\phi_{0}$ is injective. For any $f\in \Hom_{\Delta}(\C ,Q_{1}\otimes Q_{2})$, the 
image of $f$ will lie in a finite-dimensional submodule of $Q_{1}\otimes Q_{2}$. Since $Q_{1}$ is finite-dimensional and 
$Q_{2}$ is locally finite, we may assume that the image of $f$ is contained in $Q_{1}\otimes Z$ where $Z$ is a finite-dimensional submodule of $Q_{2}$. So for (3), we may assume without loss of generality that $Q_{2}$ is finite-dimensional. 

First one can reduce to the the case when $Q_{1}$ is the injective hull of $\lambda\in X_{1}$ as $\bar{A}$-module. Then 
$Q_{1}^{*}$ is the projective cover of $-\lambda$. Furthermore, for $\mu \in X_{1}$, $Q_{1}\otimes \mu$ is indecomposable and the injective hull of $\lambda\otimes \mu$. This shows that 
$\phi_{0}$ is an isomorphism in the case when $Q_{1}$ is the injective hull of $\lambda$ and $Q_{2}=\mu$. 
The functors $\Hom_{\bar{A}}(Q_{1}^{*},-)$ and $\Hom_{\Delta}(\C ,Q_{1}\otimes (-))$ are exact. One can 
apply induction on the composition length to prove the statement for arbitrary $Q_{2}$. 
\vskip .5cm 
\noindent 
(4) Let $Q_{1}$ be a finite-dimensional $\bar{A}$-module. Then 
$$\phi_{1}:\Ext_{\bar{A}}^{1}(Q_{1}^{*},Q_{2})\rightarrow \Ext^{1}_{\Delta}(\C ,Q_{1}\otimes Q_{2})$$ 
is injective. 

Consider the short exact sequences obtained where $I$ is the (finite-dimensional) injective hull of $Q_{1}$: 
\begin{equation}\label{ses1}
0\rightarrow Q_{1} \rightarrow I \rightarrow N \rightarrow 0
\qquad \text{ and } \qquad
0\rightarrow N^{*} \rightarrow I^{*} \rightarrow Q_{1}^{*} \rightarrow 0. 
\end{equation} 
These short exact sequences induce the following commutative diagram: 
$$
{\footnotesize
\minCDarrowwidth16pt
\CD
0@>>> \Hom_{\bar{A}}(Q_{1}^{*},Q_{2}) @>>> \Hom_{\bar{A}}(I^{*},Q_{2}) @>\tau_{1}>> \Hom_{\bar{A}}(N^{*},Q_{2}) @>\pi_{1}>> \Ext^{1}_{\bar{A}}(Q_{1}^{*},Q_{2}) @>>> 0  \\
@.       @V\phi_{0}VV @V\phi_{0}^{\prime}VV  @V\phi_{0}^{\prime \prime}VV   @V\phi_{1}VV    \\
0@>>> \Hom_{\Delta}(\C ,Q_{1}\otimes Q_{2}) @>>> \Hom_{\Delta}(\C,I\otimes Q_{2}) @>\tau_{2}>> \Hom_{\Delta}(\C ,N\otimes Q_{2}) @>\pi_{2}>> \Ext^{1}_{\Delta}(\C,Q_{1}\otimes Q_{2}) @>>> 0   \\
\endCD}
$$

Now according to (2) and (3), 
$\phi_{0}^{\prime \prime}$ is injective and $\phi_{0}^{\prime}$ is an isomorphism. Our goal is to show that $\phi_{1}$ is injective. 
Suppose that $\phi_{1}(x)=0$. Since $\pi_{1}$ is surjective there exists $z$ such that $\pi_{1}(z)=x$. Now by commutativity, 
$$\pi_{2}(\phi_{0}^{\prime \prime}(z))=\phi_{1}(\pi_{1}(z))=\phi_{1}(x)=0.$$ 
Therefore, $\phi_{0}^{\prime \prime}(z)\in \operatorname{ker}\pi_{2}=\operatorname{im}\tau_{2}$, and there exists 
$y$ such that $\tau_{2}(y)=\phi_{0}^{\prime \prime}(z)$. Since $\phi_{0}^{\prime}$ is an isomorphism there 
exists $w$ such that $\phi_{0}^{\prime}(w)=y$. By commutativity, 
$$\phi_{0}^{\prime \prime}(\tau_{1}(w))=\tau_{2}(\phi_{0}^{\prime}(w))=\tau_{2}(y)=\phi_{0}^{\prime \prime}(z).$$ 
Consequently, by the injectivity of $\phi_{0}^{\prime \prime}$, it follows that 
$z=\tau_{1}(w)$. Thus, $x=\pi_{1}(z)=\pi_{1}(\tau_{1}(w))=0$. 

\vskip .5cm 
\noindent 
(5) Finally, let $Q_{1}$ be a finite-dimensional $\bar{A}$-module. For $n\geq 0$, 
$$\phi_{n}:\Ext_{\bar{A}}^{n}(Q_{1}^{*},Q_{2})\rightarrow \Ext^{n}_{\Delta}(\C ,Q_{1}\otimes Q_{2})$$ 
is injective. 

For $n=0,1$, this was proved in (2) and (4). As in (4), the short exact sequences \cref{ses1} yield commutative diagrams 
for $n\geq 1$ 
$$
\CD
\Ext^{n}_{\bar{A}}(N^{*},Q_{2}) @>\epsilon_{n}>> \Ext^{n+1}_{\bar{A}}(Q_{1}^{*},Q_{2})  \\
@V\phi_{n}^{\prime \prime}VV   @V\phi_{n+1}VV    \\
\Ext^{n}_{\Delta}(\C ,N\otimes Q_{2}) @>\epsilon^{\prime}_{n}>> \Ext^{n+1}_{\Delta}(\C ,Q_{1}\otimes Q_{2})  \\
\endCD
$$
where $\epsilon_{n}$ and $\epsilon^{\prime}_{n}$ are isomorphisms. Now by induction, $\phi_{n}''$ is injective which implies that 
$\phi_{n+1}$ is injective. This completes the proof that $\phi_{n}$ is injective for all $n$. 
\vskip .5cm 
\noindent 
(6) We next show that 
$$\phi_{0}:\Hom_{\bar{A}}(Q_{1}^{*},Q_{2})\rightarrow \Hom_{\Delta}(\C ,Q_{1}\otimes Q_{2})$$ 
is an isomorphism.

We shall prove this by induction on the dimension of $Q_{1}$. If $Q_{1}$ is one-dimensional then 
(6) holds by (1). Consider the short exact sequence as in (2):
$$0\rightarrow N^{*} \rightarrow Q_{1}^{*} \rightarrow \lambda^{*} \rightarrow 0.$$ 
We have the commutative diagram with exact rows:
$$
{\footnotesize
\minCDarrowwidth22pt
\CD
0@>>> \Hom_{\bar{A}}(\lambda^{*},Q_{2}) @>>> \Hom_{\bar{A}}(Q_{1}^{*},Q_{2}) @>\tau_{1}>> \Hom_{\bar{A}}(N^{*},Q_{2}) @>\sigma_{1}>> \Ext^{1}_{\bar{A}}(\lambda^{*},Q_{2})    \\
@.       @V\phi_{0}^{\prime}VV @V\phi_{0}VV  @V\phi_{0}^{\prime \prime}VV      @V\phi_{1}^{\prime}VV  \\
0@>>> \Hom_{\Delta}(\C ,\lambda\otimes Q_{2}) @>>> \Hom_{\Delta}(\C ,Q_{1}\otimes Q_{2}) @>\tau_{2}>> \Hom_{\Delta}(\C ,N\otimes Q_{2}) @>\sigma_{2}>> \Ext^{1}_{\Delta}(\C,\lambda\otimes Q_{2})  
\endCD}
$$
Here $\phi_{0}^{\prime}$, $\phi_{0}^{\prime \prime}$ are isomorphisms and $\phi_{0}$, $\phi_{1}^{\prime}$ are injective. The  diagram above induces the following commutative diagram with exact rows: 
$$
{\footnotesize
\CD
0@>>> \Hom_{\bar{A}}(Q_{1}^{*},Q_{2})/ \Hom_{\bar{A}}(\lambda^{*},Q_{2})  @>\tau_{1}>> \Hom_{\bar{A}}(N^{*},Q_{2}) @>\sigma_{1}>> \operatorname{Im}\sigma_{1} @>>> 0   \\
@.       @V\phi_{0}VV @V\phi_{0}^{\prime \prime}VV     @V\phi_{1}^{\prime}VV  \\
0@>>> \Hom_{\Delta}(\C ,Q_{1}\otimes Q_{2})/\Hom_{\Delta}(\C ,\lambda\otimes Q_{2}) @>\tau_{2}>> \Hom_{\Delta}(\C ,N\otimes Q_{2}) @>\sigma_{2}>> \operatorname{Im}\sigma_{2} @>>>0   \\
\endCD}
$$
Here $\phi_{0}$ and $\phi_{1}^{\prime}$ are injective and $\phi_{0}^{\prime \prime}$ is an isomorphism. We claim that $\phi_{0}$ 
is surjective. Suppose that $y\in \Hom_{\Delta}(\C ,Q_{1}\otimes Q_{2})/\Hom_{\Delta}(\C,\lambda\otimes Q_{2})$, but 
$y\notin \operatorname{Im}\phi_{0}$. Then $(\phi_{0}^{\prime \prime})^{-1}\tau_{2}(y)\notin \operatorname{Im}\tau_{1}$. Since $\operatorname{ker}\sigma_{1}=\operatorname{Im}\tau_{1}$, it 
follows that $\sigma_{1}(\phi_{0}^{\prime \prime})^{-1} \tau_{2}(y)\neq 0$. From the injectivity of $\phi_{1}^{\prime}$, one has 
$\phi_{1}^{\prime}\sigma_{1}(\phi_{0}^{\prime \prime})^{-1}\tau_{2}(y)\neq 0$, and by commutativity, 
$$0\neq \sigma_{2}\phi_{0}^{\prime \prime}(\phi_{0}^{\prime \prime})^{-1} \tau_{2}(y)=\sigma_{2}\tau_{2}(y).$$ 
This is a contradiction because $\sigma_{2}\tau_{2}=0$. 

Combining this with (2) shows that $\phi_{0}$ must be an isomorphism. 
\vskip .5cm 
\noindent 
(7) We show that 
$$\phi_{n}:\Ext^{n}_{\bar{A}}(Q_{1}^{*},Q_{2})\rightarrow \Ext^{n}_{\Delta}(\C ,Q_{1}\otimes Q_{2})$$ 
is an isomorphism. 

Consider the short exact sequence 
$$0\rightarrow Q_{2} \rightarrow I \rightarrow N \rightarrow 0$$
where $I$ is injective. Then one has a commutative diagram with exact rows
$$
\CD
\Hom_{\bar{A}}(Q_{1}^{*},N) @>>> \Ext^{1}_{\bar{A}}(Q_{1}^{*},Q_{2}) @>>>0    \\
@V\phi_{0}^{\prime \prime}VV      @V\phi_{1}VV  \\
\Hom_{\Delta}(\C ,Q_{1}\otimes N) @>>> \Ext^{1}_{\Delta}(\C ,Q_{1}\otimes Q_{2}) @>>>0  \\
\endCD
$$
Here, 
$\phi_{0}^{\prime \prime}$ is an isomorphism by (6) and $\phi_{1}$ is injective by (5). It follows immediately that
$\phi_{1}$ is surjective, and hence is an isomorphism. One can use a dimension shifting argument, as in (5), to show that $\phi_{n}$ is an isomorphism for $n\geq 0$. The proof is now complete. 
\end{proof}


\let\section=\oldsection
\bibliographystyle{eprintamsmath}
\bibliography{quantum}

\end{document}